\newtheorem{thm}{Theorem}[section]
\newtheorem{cor}[thm]{Corollary}
\newtheorem{lem}[thm]{Lemma}
\newtheorem{exm}[thm]{Example}
\newtheorem{thmy}{Theorem}
\newtheorem{prop}[thm]{Proposition}
\theoremstyle{definition}
\newtheorem{defn}[thm]{Definition}
\theoremstyle{remark}
\newtheorem{rem}[thm]{\bf Remark}
\numberwithin{equation}{section}
\newcommand{\smallotimes}{\mathbin{\mathpalette\make@small\otimes}}
\newcommand{\make@small}[2]{%
  \vcenter{\hbox{%
    $\m@th\ifx#1\displaystyle\scriptstyle\else\ifx#1\textstyle\scriptstyle
     \else\scriptscriptstyle\fi\fi#2$%
  }}%
}
\begin{document}

\title[Module factorizations]{Module factorizations}

\author{Xiao-Wu Chen}

\makeatletter
\@namedef{subjclassname@2020}{\textup{2020} Mathematics Subject Classification}
\makeatother

\subjclass[2020]{16E65, 18G25, 18G80, 18G20, 18G65}
\keywords{matrix factorization, module factorization, Gorenstein projective module, triangle equivalence, Frobenius category}

\date{\today}

\begin{abstract}
For a regular  normal element in an arbitrary ring, we study the category of its module factorizations. The cokernel functor  relates module factorizations with Gorenstein projective  components to  Gorenstein projective modules over the quotient ring. The results are vast extensions of Eisenbud's matrix factorization theorem.
\end{abstract}

\maketitle


\section{Introduction}

Matrix factorizations are introduced in \cite{Eis}, which are used to obtain periodic free resolutions over hypersurface singularities. They have important applications in knot theory \cite{KR}, string theory \cite{KapL, KKP, Car}, Hodge theory \cite{BFK}, singularity categories \cite{Buc,Orl} and representation theory \cite{KST} of quivers. We refer to \cite{Nee} for a leisurely introduction.

In what follows, we will first recall three theorems relating matrix factorizations to finitely generated Gorenstein projective modules over the quotient ring: the original theorem \cite{Eis} by Eisenbud, its global version \cite{Orl} and its noncommutative version \cite{CCKM, MU}.  Then we describe the main results, which relate module factorizations with Gorenstein projective components to Gorenstein projective modules over the quotient ring. The key feature is that we deal with arbitrary modules over arbitrary rings. Furthermore, we  strengthen the noncommutative version of Eisenbud's theorem.

\subsection{Matrix factorization theorems}

Let $S$ be a commutative local noetherian ring with $\omega \in S$. A \emph{matrix factorization} of $\omega$ over $S$ is a quadruple $X=(X^0, X^1; d_X^0, d_X^1)$ with each $X^i$ a finitely generated free $S$-module,  $d_X^0\colon X^0\rightarrow X^1$ and $d_X^1\colon X^1\rightarrow X^0$ two morphisms of $S$-modules satisfying $d_X^1\circ d_X^0=\omega {\rm Id}_{X^0}$ and $d_X^0\circ d_X^1=\omega {\rm Id}_{X^1}$. If we choose bases for $X^i$,  the two morphisms $d_X^i$ are represented by two matrices $D^i$ with entries in $S$. Then we have the following genuine factorizations of the scalar matrix $\omega I$:
$$D^1D^0=\omega I=D^0D^1.$$
Here, $I$ denotes the identity matrix. Denote by $\mathbf{MF}(S; \omega)$ the category formed by matrix factorizations, and by $\underline{\mathbf{MF}}(S; \omega)$ the stable category modulo null-homotopical morphisms.

Consider the quotient ring $\bar{S}=S/{(\omega)}$. Denote by $\bar{S}\mbox{-mod}$ the abelian category of finitely generated $\bar{S}$-modules,  and by $\bar{S}\mbox{-}\underline{\rm mod}$ the stable category modulo projective modules. Denote by ${\rm MCM}(\bar{S})$ the full subcategory of $\bar{S}\mbox{-mod}$ formed by maximal Cohen-Macaulay $\bar{S}$-modules and by $\underline{\rm MCM}(\bar{S})$ the corresponding stable category.

The zeroth cokernel functor
$${\rm Cok}^0\colon \mathbf{MF}(S; \omega)\longrightarrow \bar{S}\mbox{-mod}$$
sends any matrix factorization $X=(X^0, X^1; d_X^0, d_X^1)$ to  the cokernel of $d_X^0\colon X^0\rightarrow X^1$, which is denoted by ${\rm Cok}^0(X)$. Since $\omega$ vanishes on ${\rm Cok}^0(X)$, it is naturally an $\bar{S}$-module.

The following matrix factorization theorem is classical; see \cite[Section~6]{Eis}.
\vskip 5pt

\noindent {\bf Theorem~A.} (Eisenbud) {\emph  Assume that $S$ is a regular local ring and  that $\omega\in S$ is nonzero. Then the zeroth  cokernel functor induces an equivalence
$${\rm Cok}^0\colon \underline{\mathbf{MF}}(S; \omega)\stackrel{\sim}\longrightarrow \underline{\rm MCM}(\bar{S}).$$
}

\vskip 5pt

The stable category $\underline{\mathbf{MF}}(S; \omega)$ of matrix factorizations is always canonically triangulated. In the setup above, the quotient ring $\bar{S}$ is Gorenstein. Then $\underline{\rm MCM}(\bar{S})$ is also canonically triangulated \cite{Hap}. The above equivalence is actually a triangle equivalence. We emphasize the necessity of the condition that $\omega$ is nonzero.

To describe a global version of Theorem~A, we assume that $S$ is a commutative noetherian ring which is not necessarily local. Recall that an element $\omega\in S$ is called \emph{regular} if it is a  non-zero-divisor.

Fix a regular element $\omega\in S$. We have to extend the definition of a matrix factorization of $\omega$ over $S$ by allowing each component $X^i$ to be a finitely generated \emph{projective} $S$-module. Denote by $\bar{S}\mbox{-Gproj}$ the full subcategory of $\bar{S}\mbox{-mod}$ formed by finitely generated \emph{Gorenstein projective $\bar{S}$-modules} \cite{ABr}, and by $\bar{S}\mbox{-}\underline{\rm Gproj}$ its stable category. Finitely generated Gorenstein projective modules are called modules of G-dimension zero in \cite{ABr}, Cohen-Macaulay modules in \cite{Bel00} and totally reflexive modules in \cite{AM}.

When $S$ is regular local, any nonzero element $\omega$ is regular. The  quotient ring $\bar{S}$ is local Gorenstein, and  we have ${\rm MCM}(\bar{S})=\bar{S}\mbox{-Gproj}$; see \cite[(4.13)~Theorem]{ABr}. This is one of the main motivations to study finitely generated Gorenstein projective modules.

The following global version of Theorem~A  is a slight reformulation of  \cite[Theorem~3.9]{Orl}. We mention that its non-affine version is given in \cite[Theorem~2]{Orl2}; compare \cite{PV, EfP, BW}.

\vskip 5pt

\noindent {\bf Theorem~B.}  {\emph  Assume that $S$ is a commutative noetherian ring with finite global dimension and  that $\omega\in S$ is regular. Then the zeroth cokernel functor induces a triangle equivalence
$${\rm Cok}^0\colon \underline{\mathbf{MF}}(S; \omega)\stackrel{\sim}\longrightarrow \bar{S}\mbox{-}\underline{\rm Gproj}.$$
}

\vskip 5pt
We will recall a noncommutative version of Theorem~B from \cite{CCKM, MU}. Let $A$ be a left noetherian ring. An element $\omega\in A$ is \emph{regular} if $a\omega=0$ or $\omega a=0$ implies that $a=0$. An element $\omega\in A$ is \emph{normal} if $A\omega=\omega A$; in this case, both equals $(\omega)=A\omega A$. Then there is a unique ring automorphism $\sigma\colon A\rightarrow A$ such that
$\omega a=\sigma(a)\omega$ holds for each $a\in A$.  When the regular element $\omega$ is central, we have $\sigma={\rm Id}_A$. Regular normal elements play a central role in defining  noncommutative complete intersections  \cite{KKZ} in  noncommutative algebraic geometry.

For each left $A$-module $M$, denote by ${^\sigma(M)}$ its \emph{twisted module}. Its typical element is denoted by ${^\sigma(m)}$, and its left $A$-action is given by $a{^\sigma(m)}={^\sigma(\sigma(a)m)}$. We have a canonical morphism  of $A$-modules
$$\omega_M\colon M\longrightarrow {^\sigma(M)},$$
 which sends $m$ to ${^\sigma(\omega m)}$.

By a \emph{matrix factorization} \cite{CCKM} of $\omega$ over $A$, we mean a quadruple $X=(X^0, X^1; d_X^0, d_X^1)$ such that each $X^i$ is a finitely generated projective $A$-module, $d^0_X\colon X^0\rightarrow X^1$ and $d_X^1\colon X^1\rightarrow {^\sigma(X^0)}$ are morphisms satisfying
$$d_X^1\circ d_X^0=\omega_{X^0} \mbox{ and } {^\sigma(d_X^0)}\circ d_X^1=\omega_{X^1}.$$
We denote by $\mathbf{MF}(A; \omega)$ the category of matrix factorizations of $\omega$, and by $\underline{\mathbf{MF}}(A; \omega)$ its stable category.

Let $\bar{A}=A/{(\omega)}$ be the quotient ring.  We still have the zeroth cokernel functor
$${\rm Cok}^0\colon \mathbf{MF}(A; \omega)\longrightarrow \bar{A}\mbox{-mod},$$
where the $\bar{A}$-module ${\rm Cok}^0(X)$ is defined to be the cokernel of $d_X^0\colon X^0\rightarrow X^1$.

\vskip 5pt

\noindent {\bf Theorem~C.} {\emph  Assume that $A$ is a left noetherian ring with finite left global dimension and  that $\omega\in A$ is regular and normal. Then the zeroth cokernel functor induces a triangle equivalence
$${\rm Cok}^0\colon \underline{\mathbf{MF}}(A; \omega)\stackrel{\sim}\longrightarrow \bar{A}\mbox{-}\underline{\rm Gproj}.$$
}
\vskip 5pt

We mention that a graded version of Theorem~C for AS-regular algebras is due to \cite[Theorem~5.8]{CCKM} and \cite[Theorem~6.6]{MU}. The argument given there applies well to the ungraded case.

\subsection{The  main results}
In this subsection, we will describe the main results.

Throughout the paper, we fix an arbitrary ring $A$ and an element $\omega\in A$ which is regular and normal. There is a unique ring automorphism $\sigma\colon A\rightarrow A$ satisfying $\omega a=\sigma(a)\omega$ for any $a\in A$. Set $\bar{A}=A/(\omega)$ to be the quotient ring.

By a \emph{module factorization} of $\omega$ over $A$, we mean a quadruple $X=(X^0, X^1; d_X^0, d_X^1)$, where both $X^i$ are arbitrary $A$-modules, $d_X^0\colon X^0\rightarrow X^1$ and $d_X^1\colon X^1\rightarrow {^\sigma(X^0)}$ are morphisms satisfying
$$d_X^1\circ d_X^0=\omega_{X^0} \mbox{ and } {^\sigma(d_X^0)}\circ d_X^1=\omega_{X^1}.$$
Denote by $\mathbf{F}(A; \omega)$ the abelian category formed by module factorizations; compare \cite{BE}. Module factorizations over commutative rings are defined in \cite{DM}, which are called \emph{linear factorizations}; see also \cite{BT} and compare \cite[Section~9]{EP}.  We refer to \cite{BDFI, BJ} for  factorization categories, and \cite{BRTV, Pi1, Pi2} for differential graded aspects of matrix factorizations.

Therefore,  a matrix factorization of $\omega$ is just a module factorization $X$ with each component $X^i$ a finitely generated projective $A$-module. They form a full subcategory $\mathbf{MF}(A; \omega)$ of $\mathbf{F}(A; \omega)$. Denote by $\bar{A}\mbox{-Mod}$ the category of left $\bar{A}$-modules. We will consider the \emph{zeroth cokernel functor}
$${\rm Cok}^0\colon\mathbf{F}(A; \omega)\longrightarrow \bar{A}\mbox{-Mod},$$
which sends any module factorization $X$  to the $\bar{A}$-module ${\rm Cok}^0(X)$, that is, the cokernel of $d_X^0\colon X^0\rightarrow X^1$.

We introduce  a brand new homotopy relation, that is, \emph{p-null-homotopical morphisms} between module factorizations in Section~4. For morphisms between matrix factorizations, p-null-homotopical morphisms coincide with the usual null-homotopical morphisms. Denote by $\underline{\mathbf{F}}(A; \omega)$ the stable category of $\mathbf{F}(A; \omega)$ modulo p-null-homotopical morphisms. Then the stable category  $\underline{\mathbf{MF}}(A; \omega)$ is viewed as a full subcategory of $\underline{\mathbf{F}}(A; \omega)$.

Recall that \emph{Gorenstein projective modules} that are not necessarily finitely generated are introduced in \cite{EJ}. These modules play a central role in Gorenstein homological algebras. Gorenstein projective modules form a Frobenius exact category; see \cite[Proposition~3.8]{Bel}. On the other hand, any Frobenius exact category is equivalent to a certain category formed by Gorenstein projective modules over some category; see \cite[Theorem~4.2]{Chen12}. This close link to Frobenius exact categories makes Gorenstein projective modules indispensable in modern homological algebra. Moreover,  the integral representation theory of finite groups \cite{Buc, Bar, BBIKP} mainly studies Gorenstein projective modules over the group rings.

We denote by  $\bar{A}\mbox{-GProj}$ the full subcategory of $\bar{A}\mbox{-Mod}$ formed by Gorenstein projective $\bar{A}$-modules. It stable category $\bar{A}\mbox{-}\underline{\rm GProj}$ modulo projective $\bar{A}$-modules is canonically  triangulated.

Denote by $\mathbf{GF}(A; \omega)$ the full subcategory of $\underline{\mathbf{F}}(A; \omega)$  formed by those module factorizations $X$ with each component $X^i$ a Gorenstein projective $A$-module, and by $\underline{\mathbf{GF}}(A; \omega)$ the corresponding full subcategory of $\underline{\mathbf{F}}(A; \omega)$. It seems to be  nontrivial that  $\mathbf{GF}(A; \omega)$ is naturally a Frobenius exact category, whose proof relies on certain \emph{Frobenius functors} \cite{Mor65}; see Section~3. Consequently, the stable category $\underline{\mathbf{GF}}(A; \omega)$ is canonically triangulated.

Denote by  $\mathbf{G}^0\mathbf{F}(A; \omega)$ the full subcategory of $\mathbf{GF}(A; \omega)$ formed by those module factorizations $X$ with the component $X^1$  projective. The corresponding full subcategory  $\underline{\mathbf{G}^0\mathbf{F}}(A; \omega)$ of $\underline{\mathbf{GF}}(A; \omega)$ is a triangulated subcategory.

\vskip 5pt
\noindent {\bf Theorem~1 (= Theorem~\ref{thm:cok0G}).}  \emph{The zeroth cokernel functor ${\rm Cok}^0\colon\mathbf{F}(A; \omega)\rightarrow \bar{A}\mbox{-}{\rm Mod}$ induces a triangle equivalence
$${\rm Cok}^0\colon \underline{\mathbf{G}^0\mathbf{F}}(A; \omega) \stackrel{\sim}\longrightarrow \bar{A}\mbox{-}\underline{\rm GProj}.$$}
\vskip 5pt

We mention that module factorizations  with maximal Cohen-Macaulay components over a commutative local Gorenstein ring appear implicitly in the study \cite{EP} of layered  resolutions. In view of \cite[Section~9]{EP},  Theorem~1  to some extent, at least its finite commutative version, is expected by experts.

Each $A$-module $M$ yields a \emph{trivial} module factorization $\theta^0(M)=(M, M; {\rm Id}_M, \omega_M)$. Denote by $\mathcal{N}$ the full subcategory of $\underline{\mathbf{GF}}(A; \omega) $ formed by those objects that are isomorphic to $\theta^0(G)$ for some Gorenstein projective $A$-module $G$. It is a triangulated subcategory; moreover, it is triangle equivalent to $A\mbox{-}\underline{\rm GProj}$.

\vskip 5pt
\noindent {\bf Theorem~2 (= Theorem~\ref{thm:quotient}).}  \emph{The zeroth cokernel functor ${\rm Cok}^0\colon\mathbf{F}(A; \omega)\rightarrow \bar{A}\mbox{-}{\rm Mod}$ induces a triangle equivalence
$$ \underline{\mathbf{GF}}(A; \omega)/\mathcal{N} \stackrel{\sim}\longrightarrow \bar{A}\mbox{-}\underline{\rm GProj},$$
where $\underline{\mathbf{GF}}(A; \omega)/\mathcal{N}$ denotes the Verdier quotient category. }
\vskip 5pt

When the ring $A$ has finite left global dimension, any Gorenstein projective $A$-module is projective. Therefore,  the category $\mathcal{N}$ is trivial and $\underline{\mathbf{G}^0\mathbf{F}}(A; \omega)=\underline{\mathbf{GF}}(A; \omega)$. Consequently, the equivalences in Theorems~1 and 2 coincide.

Denote by $\bar{A}\mbox{-}\underline{\rm Gproj}^{<+\infty}$ the full category of $\bar{A}\mbox{-}\underline{\rm GProj}$ formed by those \emph{totally reflexive $\bar{A}$-modules}  with finite projective dimension as $A$-modules; it is  a triangulated subcategory of $\bar{A}\mbox{-}\underline{\rm GProj}$. We mention that if $\bar{A}$ is left coherent,  then totally reflexive $\bar{A}$-modules coincide with finitely presented Gorenstein projective $\bar{A}$-modules.

\vskip 5pt
\noindent {\bf Theorem~3 (= Theorem~\ref{thm:finite}).}  \emph{The equivalence in Theorem~1 restricts to  a triangle equivalence
$$ {\rm Cok}^0\colon \underline{\mathbf{MF}}(A; \omega) \stackrel{\sim}\longrightarrow \bar{A}\mbox{-}\underline{\rm Gproj}^{<+\infty}.$$ }
\vskip 5pt

In Theorem~3, if $A$ has finite left global dimension, we have $\bar{A}\mbox{-}\underline{\rm Gproj}=\bar{A}\mbox{-}\underline{\rm Gproj}^{<+\infty}$. Then we strength Theorem~C.  The equivalence in Theorem~1 is an extension of the one in Theorem~3. Therefore, Theorems~1 and 2 might be viewed as  vast extensions of  Theorem~A, namely Eisenbud's matrix factorization theorem. Even when the ring $A$ is commutative local noetherian, the obtained equivalences in Theorem~1 and~2 seem to be brand new.

Since duality functors do not behave well on arbitrary modules, the argument \cite{CCKM, MU} in proving Theorem~C does not extend to our setting.  The main ingredients of the proof of Theorem~1 are Theorems~\ref{thm:GP} and~\ref{thm:cok0}. Theorem~\ref{thm:GP} implies that an $\bar{A}$-module is Gorenstein projective if and only if its first $A$-syzygy is Gorenstein projective. Theorem~\ref{thm:cok0} establishes an equivalence between $\bar{A}\mbox{-Mod}$ with a certain full subcategory of the stable category of $\mathbf{F}(A; \omega)$. Theorem~2 relies on Theorem~1 and a recollement in Proposition~\ref{prop:recGF}. In a certain sense, the recollement in Proposition~\ref{prop:recGF} illustrates better the relation among $\underline{\mathbf{GF}}(A; \omega)$, $\bar{A}\mbox{-}\underline{\rm GProj}$ and $A\mbox{-}\underline{\rm GProj}$.

The paper is organized as follows. In Section~2, we relate Gorenstein projective  $A$-modules to Gorenstein projective $\bar{A}$-modules in Theorem~\ref{thm:GP}. We prove a change-of-rings theorem on Gorenstein projective dimensions in Theorem~\ref{thm:Gpd}. In Section~3, we relate the category of module factorizations to  the module category over a matrix ring  in Proposition~\ref{prop:F-Gamma}. We obtain various Frobenius functors, and infer that the category $\mathbf{GF}(A; \omega)$ is Frobenius exact in Proposition~\ref{prop:GP}. We introduce p-null-homotopical morphisms in Section~4. We study  the zeroth cokernel functor  and prove Theorem~1 in Section~5. We obtain a recollement in Proposition~\ref{prop:recGF}, and prove Theorem~2 in Section~6. We study matrix factorizations and prove Theorem~3 in Section~7. In the final section, we illustrate the obtained results using integral representations of finite groups.

In what follows, modules are always left modules. We refer to \cite{Hap, Kel} for Frobenius exact categories and triangulated categories. For Gorenstein projective modules, we refer to \cite{EJ, Holm, Christ}.

\section{Gorenstein projective modules and dimensions}

In this section, we study Gorenstein projective modules over $A$ and the quotient ring $\bar{A}=A/{(\omega)}$. The main result implies that an $\bar{A}$-module $N$ is Gorenstein projective if and only if its first syzygy $\Omega_A(N)$ is a Gorenstein projective $A$-module; see Theorem~\ref{thm:GP}. We reformulate Theorem~\ref{thm:GP} as a change-of-rings theorem on Gorenstein projective dimensions in Theorem~\ref{thm:Gpd}, which in turn  allows us to slightly strengthen Theorem~\ref{thm:GP}; see Theorem~\ref{thm:GP-2}.

Denote by  $A\mbox{-Mod}$ the category of left $A$-modules, and by $A\mbox{-Proj}$ its full subcategory formed by projective $A$-modules. The stable category of $A\mbox{-Mod}$ modulo morphisms factoring through projective modules is denoted by $A\mbox{-}\underline{\rm Mod}$. We have the syzygy endofunctor $\Omega_A\colon A\mbox{-}\underline{\rm Mod}\rightarrow A\mbox{-}\underline{\rm Mod}$.

\subsection{Gorenstein projective modules and approximations}

We usually  denote a cochain complex of $A$-modules by $P^\bullet=(P^n, d_P^n)_{n\in \mathbb{Z}}$, where the differentials $d_P^n\colon P^n\rightarrow P^{n+1}$ satisfy $d_P^{n+1}\circ d_P^n=0$. Recall that a unbounded complex $P^\bullet$ is called \emph{totally acyclic} provided that it is acyclic with each component $P^n$ projective and that for each projective $A$-module $Q$, the Hom complex ${\rm Hom}_A(P^\bullet, Q)$ is always acyclic. An $A$-module $L$ is \emph{Gorenstein projective} if there exists a totally acyclic complex $P^\bullet$ with $L\simeq Z^0(P^\bullet)$. Here, $Z^0(P^\bullet)={\rm Ker}(d_P^0)$ is the zeroth cocycle of $P^\bullet$, and the complex $P^\bullet$ is called a \emph{complete resolution} of $L$. We denote by $A\mbox{-GProj}$ the full subcategory formed by Gorenstein projective $A$-modules, and have $A\mbox{-Proj}\subseteq A\mbox{-GProj}$.

It is well known that  $A\mbox{-GProj}$ is closed under extensions in $A\mbox{-Mod}$; see \cite[Theorem~2.5]{Holm} and compare  \cite[Lemma~2.3]{AM}. Therefore, it is naturally an exact category in the sense of \cite{Qui}. Moreover, it is Frobenius, whose projective-injective objects are precisely all projective $A$-modules; see \cite[Proposition~3.8(i)]{Bel}. Consequently, by \cite[I.2]{Hap} the stable category $A\mbox{-}\underline{\rm GProj}$ is naturally triangulated. In particular, the restricted syzygy endofunctor $\Omega_A\colon A\mbox{-}\underline{\rm GProj}\rightarrow A\mbox{-}\underline{\rm GProj}$ is an autoequivalence, and its quasi-inverse is the suspension functor of  $A\mbox{-}\underline{\rm GProj}$.

Let  $\mathcal{S}$ be any full subcategory of $A\mbox{-Mod}$. For an $A$-module $L$, a \emph{left $\mathcal{S}$-approximation} of $L$ means a morphism $f\colon L\rightarrow S$ with $S\in \mathcal{S}$ such that any morphism $t\colon L\rightarrow T$ with $T\in \mathcal{S}$ factors through $f$, that is, there exists $t'\colon S\rightarrow T$ in $\mathcal{S}$ satisfying $t=t'\circ f$. For each $n\geq 1$, we define the following full subcategory
$${^{\perp_{[1, n]}}\mathcal{S}}=\{L\in A\mbox{-}{\rm Mod}\; |\; {\rm Ext}_A^i(L, S)=0 \mbox{ for all } S\in \mathcal{S}, 1\leq i\leq n\}.$$
We set ${^\perp\mathcal{S}}=\bigcap_{n\geq 1} {^{\perp_{[1, n]}}\mathcal{S}}$.

The following fact is standard.

\begin{lem}\label{lem:ext1}
Suppose that $0\rightarrow L' \stackrel{a}\rightarrow P\rightarrow L\rightarrow 0$ is a short exact sequence of $A$-modules with $P$ projective. Then $L$ belongs to ${^{\perp_{1}}(A\mbox{-}{\rm Proj})}$ if and only if $a$ is a left $(A\mbox{-}{\rm Proj})$-approximation of $L'$. \hfill $\square$
\end{lem}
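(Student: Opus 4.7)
The plan is to apply the contravariant functor $\Hom_A(-,Q)$ to the given short exact sequence for an arbitrary projective $A$-module $Q$ and read off the long exact sequence of $\Ext$. Since $P$ is projective we have $\Ext^1_A(P,Q)=0$, so the long exact sequence collapses to
$$\Hom_A(P,Q)\xrightarrow{a^{\ast}} \Hom_A(L',Q)\To \Ext^1_A(L,Q)\To 0,$$
where $a^{\ast}=\Hom_A(a,Q)$ denotes precomposition with $a$. Hence the vanishing $\Ext^1_A(L,Q)=0$ is equivalent to the surjectivity of $a^{\ast}$, that is, to the condition that every morphism $t\colon L'\to Q$ factors through $a$.

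Quantifying over all projective $Q$, this reads: $L\in {^{\perp_{1}}(A\mbox{-}\mathrm{Proj})}$ if and only if every morphism from $L'$ into any projective module factors through $a$, which is precisely the defining property of $a$ being a left $(A\mbox{-}\mathrm{Proj})$-approximation of $L'$. Both implications of the lemma follow from this single observation, so the proof will simply present the collapsed exact sequence and remark on the two equivalent interpretations of its surjectivity.

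I do not foresee a genuine obstacle: the lemma is in essence a rephrasing of the long exact sequence of $\Ext$ in the special situation where the middle term of the short exact sequence is projective, so that $\Ext^{1}_{A}(L,Q)$ is identified with the cokernel of $a^{\ast}$. The only thing worth highlighting in writing is that the quantifier ``for all projective $Q$'' matches on the two sides of the equivalence, which is automatic since both conditions are stated over the whole subcategory $A\mbox{-}\mathrm{Proj}$.
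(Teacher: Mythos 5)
Your argument is correct and is exactly the standard argument the paper has in mind when it omits the proof: applying $\mathrm{Hom}_A(-,Q)$ and using $\mathrm{Ext}^1_A(P,Q)=0$ identifies $\mathrm{Ext}^1_A(L,Q)$ with the cokernel of $a^{\ast}$, so its vanishing for all projective $Q$ is precisely the left $(A\mbox{-}{\rm Proj})$-approximation property of $a$. Nothing further is needed.
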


The following well-known result will be useful; see \cite[Proposition~2.3]{Holm}.

\begin{lem}\label{lem:GP}
Let $L$ be an $A$-module. Then $L$ is Gorenstein projective if and only if for each $i\geq 0$, there exists a short exact sequence $0\rightarrow L^i\rightarrow P^i\rightarrow L^{i+1}\rightarrow 0$ of $A$-modules such that each $P^i$ is projective, each $L^i$ belongs to $^\perp(A\mbox{-}{\rm Proj})$ and $L=L^0$. \hfill $\square$
\end{lem}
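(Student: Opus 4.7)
The plan is to prove both implications directly from the definition of total acyclicity, via dimension shifting; this is essentially the argument in \cite[Proposition~2.3]{Holm}.

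For the forward direction, assume $L$ is Gorenstein projective with complete resolution $P^\bullet=(P^n, d_P^n)_{n\in\mathbb{Z}}$ and $L\simeq Z^0(P^\bullet)$. I would set $L^i=Z^i(P^\bullet)=\ker(d_P^i)$; the acyclicity of $P^\bullet$ then produces the requisite short exact sequences $0\to L^i\to P^i\to L^{i+1}\to 0$ for all $i\geq 0$. To verify $L^i\in {^\perp(A\mbox{-}{\rm Proj})}$, I would observe that for each $i\geq 0$ the shifted complex $P^\bullet[i]$ is again totally acyclic with $L^i\simeq Z^0(P^\bullet[i])$, so its left half furnishes a projective resolution of $L^i$ whose Hom against any projective $Q$ is acyclic; thus $\Ext_A^j(L^i, Q)=0$ for all $j\geq 1$.

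For the reverse direction, I would first splice the given short exact sequences into a right-half complex $P^0\to P^1\to P^2\to\cdots$ whose $i$-th cocycle is $L^i$ and whose zeroth cocycle is $L=L^0$. Next I would pick an arbitrary projective resolution $\cdots\to P^{-2}\to P^{-1}\to P^0\to L\to 0$ (possible since $L$ is a priori just an $A$-module) and concatenate to form an unbounded acyclic complex $P^\bullet$ of projective modules with $Z^0(P^\bullet)=L$. To verify total acyclicity I need $\Hom_A(P^\bullet, Q)$ acyclic for every projective $Q$. Exactness in negative degrees amounts to $\Ext_A^j(L, Q)=0$ for $j\geq 1$, which holds since $L=L^0\in {^\perp(A\mbox{-}{\rm Proj})}$ and the left half is an honest projective resolution of $L$. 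Exactness in non-negative degrees follows, via Lemma~\ref{lem:ext1}, from the fact that each inclusion $L^{i+1}\hookrightarrow P^{i+1}$ is a left $(A\mbox{-}{\rm Proj})$-approximation; equivalently, $\Ext_A^1(L^{i+2}, Q)=0$, which is again given by hypothesis.

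The main obstacle is bookkeeping: keeping the cohomological direction and the indexing straight while splicing the left projective resolution to the right factorization system, and making sure the acyclicity of the Hom complex is checked at every single degree rather than just on the right tail. Beyond this, the entire argument reduces to iterated application of the long exact sequence of $\Ext$ together with Lemma~\ref{lem:ext1}, and no deeper input is required.
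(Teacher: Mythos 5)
Your argument is correct and is exactly the standard splicing proof of this characterization, which the paper does not reprove but simply quotes from \cite[Proposition~2.3]{Holm}. Only the bookkeeping you yourself anticipate needs adjusting: the left half should be a projective resolution $\cdots\rightarrow P^{-2}\rightarrow P^{-1}\rightarrow L\rightarrow 0$ spliced into the right half via $P^{-1}\twoheadrightarrow L\hookrightarrow P^{0}$ (your displayed resolution reuses $P^{0}$), and at the seam the acyclicity of $\Hom_A(P^\bullet,Q)$ at the spot $\Hom_A(P^{-1},Q)$ uses that $L^{0}\hookrightarrow P^{0}$ is a left $(A\mbox{-}{\rm Proj})$-approximation, i.e. $\Ext_A^1(L^{1},Q)=0$, which the hypotheses also supply.
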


Recall the twist autoequivalence $(-)^\sigma\colon A\mbox{-Mod}\rightarrow  A\mbox{-Mod}$. For an $A$-module $M$, a typical element of ${^\sigma(M)}$ is written as ${^\sigma(m)}$ and its $A$-action is given such that $a{^\sigma(m)}={^\sigma(\sigma(a)m)}$. For a morphism $f\colon M\rightarrow M'$, the corresponding morphism ${^\sigma(f)}\colon {^\sigma(M)}\rightarrow {^\sigma(M')}$ sends ${^\sigma(m)}$ to ${^\sigma(f(m))}$.

The \emph{canonical morphism}
$$\omega_M\colon M\longrightarrow {^\sigma(M)}$$
sends $m$ to ${^\sigma(\omega m)}$. The $A$-module $M$ is called \emph{$\omega$-torsionfree} if $\omega_M$ is mono, in which case the given element $\omega$ is also called $M$-regular.  Since $\omega$ is regular in $A$, any projective $A$-module is $\omega$-torsionfree.

The following result is elementary.

\begin{lem}\label{lem:elem}
Assume that $M$ and $N$ are $A$-modules satisfying that $M$ is $\omega$-torsionfree and $\omega_N=0$. Then we have ${\rm Hom}_A(N, M)=0$.\hfill $\square$
\end{lem}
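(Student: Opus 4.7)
The plan is to show directly that any $A$-module homomorphism $f\colon N\to M$ vanishes, by checking that $f(n)=0$ for every $n\in N$. The two hypotheses point in opposite directions: $N$ is annihilated by left multiplication by $\omega$, while $M$ detects such multiplication injectively via the canonical map $\omega_M$. The proof will simply reconcile these two facts using $A$-linearity.

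First I would unpack the hypothesis $\omega_N=0$. Since $\omega_N(n)={^\sigma(\omega n)}$ and the assignment $x\mapsto {^\sigma(x)}$ is a bijection of underlying sets, $\omega_N=0$ is equivalent to $\omega n=0$ for every $n\in N$. Second, I would apply $A$-linearity of $f$: from $\omega n=0$ and $f(\omega n)=\omega f(n)$ one obtains $\omega f(n)=0$ in $M$. Third, the $\omega$-torsionfreeness of $M$ says that $\omega_M\colon M\to {^\sigma(M)}$ is mono, and since $\omega_M(f(n))={^\sigma(\omega f(n))}={^\sigma(0)}=0$, it follows that $f(n)=0$, as required.

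There is no real obstacle here; the lemma is purely formal and functions as a book-keeping tool comparing the roles of $\omega_M$ and $\omega_N$. The only point worth flagging is that although $\omega_M$ and $\omega_N$ land in the twisted modules ${^\sigma(M)}$ and ${^\sigma(N)}$ respectively, the relevant properties—vanishing of $\omega_N$ and injectivity of $\omega_M$—translate verbatim into the statements ``$\omega$ annihilates $N$'' and ``$\omega$ is $M$-regular'', since $\sigma$ is an automorphism of $A$ and the twist induces a bijection on underlying abelian groups. The resulting two-line argument does not use anywhere that $\omega$ is normal in $A$, only that the canonical map $\omega_{(-)}$ is well-defined.
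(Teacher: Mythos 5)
Your argument is correct and is exactly the elementary check the paper has in mind (the lemma is stated with its proof omitted as standard): for $f\colon N\to M$ and $n\in N$, $\omega_N=0$ gives $\omega n=0$, hence $\omega f(n)=f(\omega n)=0$, and $\omega$-torsionfreeness of $M$ forces $f(n)=0$. Nothing further is needed.
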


Recall that $\bar{A}=A/{(\omega)}$. In what follows, we will always identify any $A$-module $N$ satisfying $\omega_N=0$ with the corresponding $\bar{A}$-module, still denoted by  $N$.

\begin{lem}\label{lem:factor}
Assume that $0\rightarrow L\stackrel{a}\rightarrow M\stackrel{b}\rightarrow N\rightarrow 0$ is a short exact sequence of $A$-modules with $\omega_N=0$. Let $f\colon L\rightarrow T$ be any morphism of $A$-modules. Then there exists a morphism $h\colon M\rightarrow {^\sigma(T)}$ satisfying $h\circ a=\omega_T\circ f$.
\end{lem}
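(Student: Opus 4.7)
The plan is to construct $h$ directly by using that $\omega$ annihilates $N$, then verify $A$-linearity via the twisted module structure.

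First, I observe that the hypothesis $\omega_N=0$ says exactly that $\omega n=0$ for every $n\in N$. Hence for any $m\in M$ we have $b(\omega m)=\omega b(m)=0$, so $\omega m\in\ker b=\operatorname{im} a$. Since $a$ is monic, there is a unique element $l_m\in L$ with $a(l_m)=\omega m$. I would then set
\[
h\colon M\To {^\sigma(T)},\qquad h(m)={^\sigma(f(l_m))}.
\]
Additivity is immediate from uniqueness of $l_m$, since $a(l_{m+m'})=\omega(m+m')=a(l_m)+a(l_{m'})=a(l_m+l_{m'})$ and $a$ is injective.

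Next I would check $A$-linearity, which is the only place where the normality of $\omega$ (and hence the twist) enters. For $r\in A$ we have
\[
a(l_{rm})=\omega(rm)=(\omega r)m=\bigl(\sigma(r)\omega\bigr)m=\sigma(r)\,a(l_m)=a\bigl(\sigma(r)l_m\bigr),
\]
so $l_{rm}=\sigma(r)l_m$ by injectivity of $a$. Consequently
\[
h(rm)={^\sigma(f(\sigma(r)l_m))}={^\sigma(\sigma(r)f(l_m))}=r\cdot{^\sigma(f(l_m))}=r\cdot h(m),
\]
where the penultimate equality is the definition of the left $A$-action on the twisted module ${^\sigma(T)}$.

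Finally, I would verify the asserted factorization. For $l\in L$, since $a(\omega l)=\omega a(l)$, we have $l_{a(l)}=\omega l$, and therefore
\[
(h\circ a)(l)={^\sigma(f(\omega l))}={^\sigma(\omega f(l))}=\omega_T(f(l))=(\omega_T\circ f)(l),
\]
which gives the required identity. The construction is entirely elementary, so there is no genuine obstacle; the only subtlety is keeping track of the twist $\sigma$ in the $A$-linearity check, which is why $h$ is forced to land in ${^\sigma(T)}$ rather than $T$.
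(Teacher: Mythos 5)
Your construction is correct and is essentially the paper's own argument made explicit: the paper reduces to factoring $\omega_L$ through $a$ via a diagram chase on the commutative diagram comparing the sequence with its twist, and your map $m\mapsto {^\sigma(f(l_m))}$ is exactly that chase written elementwise (with the twist and normality of $\omega$ handled correctly in the $A$-linearity check). No gaps.
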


\begin{proof}
By $\omega_T\circ f={^\sigma(f)}\circ \omega_L$, it suffices to prove that $\omega_L$ factors through $a$. Consider the following commutative diagram with exact rows.
\[
\xymatrix{
0\ar[r] & L\ar[d]_-{\omega_L}  \ar[r]^-{a} & M\ar[d]^-{\omega_M} \ar[r]^-{b} & N\ar[d]^-{\omega_N} \ar[r] & 0\\
0\ar[r] & {^\sigma(L)} \ar[r]^-{^\sigma(a)} & {^\sigma(M)} \ar[r]^-{^\sigma(b)} & {^\sigma(N)} \ar[r] & 0
}\]
Since $\omega_N=0$, by a diagram-chasing we deduce the required factorization.
\end{proof}

\begin{prop}\label{prop:approx}
Suppose that there is a commutative diagram of $A$-modules with exact rows.
\[\xymatrix{
0\ar[r] &L\ar[d]_-{i} \ar[r]^-{a} & P \ar[d]^-{j}\ar[r]^-{b} & N \ar[d]^-{k} \ar[r] & 0\\
0\ar[r] &U \ar[r]^-{c} \ar[r] & Q \ar[r]^-{d} & E \ar[r] &0
}\]
Assume that both $P$ and $Q$ are projective, $\omega_N=0=\omega_E$ and that $E$ is projective as an $\bar{A}$-module. Then the following statements hold.
\begin{enumerate}
\item If $i$ is a left $(A\mbox{-}{\rm Proj})$-approximation, then $k$ is a left $(\bar{A}\mbox{-}{\rm Proj})$-approximation.
\item Assume further that $j$ is a split monomorphism. If  $k$ is a left $(\bar{A}\mbox{-}{\rm Proj})$-approximation,  then $i$ is a left $(A\mbox{-}{\rm Proj})$-approximation.
\end{enumerate}
\end{prop}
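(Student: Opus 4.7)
The plan is to transfer the approximation property of $i\colon L\to U$ (for $A$-projective targets) to that of $k\colon N\to E$ (for $\bar{A}$-projective targets), and vice versa. The key tools are the $A$-projectivity of $P$ and $Q$, the $\bar{A}$-projectivity of $E$, the vanishing of $\omega$ on $N$ and $E$, and Lemma~\ref{lem:factor} together with the fact (used in the paper just above) that projective $A$-modules are $\omega$-torsionfree, so each canonical morphism $\omega_T\colon T\to{^\sigma(T)}$ is mono for $T$ projective.

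For part~(1), let $t\colon N\to T'$ with $T'$ projective over $\bar{A}$. I would choose a projective $A$-module $T$ together with a surjection $\pi\colon T\twoheadrightarrow T'$ (for instance a free $A$-module surjecting onto $T'$) and, using $A$-projectivity of $P$, lift $t\circ b\colon P\to T'$ along $\pi$ to $\hat t\colon P\to T$. The composition $\hat t\circ a\colon L\to T$ lands in a projective $A$-module, so the hypothesis that $i$ is a left $(A\mbox{-Proj})$-approximation produces $g\colon U\to T$ with $g\circ i=\hat t\circ a$. Assembling $\hat t$ and $g$ via the pushout $W=P\sqcup_L U$, I would then combine with the morphism $W\to Q$ induced by $j$ and $c$, and use that the $\bar{A}$-projectivity of $E$ makes the second row split as a short exact sequence of $\bar{A}$-modules after reduction modulo~$\omega$; a diagram chase using $\pi\circ g\circ i=\pi\circ\hat t\circ a=t\circ b\circ a=0$ then descends the construction to the required $s\colon E\to T'$ with $s\circ k=t$.

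For part~(2), let $f\colon L\to T$ with $T$ projective over $A$. By Lemma~\ref{lem:factor} applied to the first row, there is $h\colon P\to{^\sigma(T)}$ with $h\circ a=\omega_T\circ f$. Reducing modulo $\omega{^\sigma(T)}={^\sigma(\omega T)}$, the composition $h\circ a$ becomes zero (since $\omega_T\circ f$ already lands in ${^\sigma(\omega T)}$), so $h$ descends to $\bar h\colon P\to{^\sigma(T/\omega T)}$ which vanishes on $a(L)$, hence factors as $\tilde t\circ b$ for a unique $\tilde t\colon N\to{^\sigma(T/\omega T)}$. Since ${^\sigma(T/\omega T)}$ is a projective $\bar{A}$-module and $k$ is a left $(\bar{A}\mbox{-Proj})$-approximation, we obtain $\bar s\colon E\to{^\sigma(T/\omega T)}$ with $\bar s\circ k=\tilde t$. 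Using the retraction $r\colon Q\to P$ of the split monomorphism $j$ to extend $h$ to $h\circ r\colon Q\to{^\sigma(T)}$, I would verify that the reduction $\overline{h\circ r}$ agrees with $\bar s\circ d$ on $j(P)$ and, after adjusting $h$ (within its $\omega_T f$-fiber) so that the two agree on all of $Q$, conclude that $h\circ r\circ c\colon U\to{^\sigma(T)}$ lands in ${^\sigma(\omega T)}$. Writing $h\circ r\circ c=\omega_T\circ g$ for some $g\colon U\to T$ then yields $\omega_T\circ g\circ i=\omega_T\circ f$, and the injectivity of $\omega_T$ upgrades this to $g\circ i=f$.

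The main obstacle is the asymmetry between $A$- and $\bar{A}$-projectivity: a projective $\bar{A}$-module is generally not $A$-projective, so neither hypothesis applies directly to the target appearing in the conclusion. Bridging this gap needs a combination of (i) covering the target by an appropriate projective module on the correct side, (ii) reduction modulo~$\omega$, where the second row splits thanks to $E\in\bar{A}\mbox{-Proj}$, and (iii) bookkeeping of $\omega$-torsion via Lemma~\ref{lem:factor} and the injectivity of $\omega_T$. In part~(2), the subtlest step is the coordination between the retraction $r$ and the lift $\bar s$ produced by the approximation property of~$k$, which is required to force the map $h\circ r\circ c$ into the image of $\omega_T$; this is the price paid for not having $i$ as a direct hypothesis.
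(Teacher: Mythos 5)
Both halves of your proposal start along the paper's lines but break down at the decisive step, in each part for a different reason. In part (1), after producing $\hat t\colon P\to T$ and $g\colon U\to T$ with $g\circ i=\hat t\circ a$, you never construct any morphism defined on $Q$ (or on $E$) with values in $T'$, and that is where the whole difficulty sits: $E$ is the cokernel of $c\colon U\to Q$, so to ``descend'' anything to $E$ you need a map out of $Q$ killing $c(U)$. The identity $\pi\circ g\circ i=0$ only says that $\pi\circ g$ kills $i(L)$, which is irrelevant to $c(U)\subseteq Q$, and the pushout $W=P\sqcup_L U$ only \emph{receives} maps from $P$ and $U$; there is no mechanism for extending anything from $W$ to $Q$. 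Also, reduction modulo $\omega$ does not make the second row short exact (one has $\mathrm{Tor}_1^A(\bar A,E)\cong E\neq 0$ since $\omega E=0$); it only gives a splitting of $\bar Q\twoheadrightarrow E$, and with no map out of $\bar Q$ in hand that splitting produces nothing. The paper bridges exactly this gap by applying Lemma~\ref{lem:factor} (using $\omega_E=0$) to extend the map $\omega\circ(\text{approximation lift})$ over $c$ to a morphism $y\colon Q\to{^\sigma(V)}$, descending $p\circ y$ to $z\colon E\to F$ along $d$, and then --- the second ingredient you omit --- invoking Lemma~\ref{lem:elem} ($\omega_N=0$ against the torsionfree module ${^\sigma(V)}$) to upgrade ``$s$ and $y\circ j$ agree after composing with $a$'' to the on-the-nose identity $s=y\circ j$, which is what finally yields $t=z\circ k$.

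In part (2) your construction of $h$, $\tilde t$ and $\bar s$ coincides with the paper's, but the endgame fails. By Lemma~\ref{lem:elem} the morphism $h$ with $h\circ a=\omega_T\circ f$ is \emph{unique} (two choices differ by a map factoring through $N$ into the torsionfree module ${^\sigma(T)}$), so its ``$\omega_T f$-fiber'' is a single point and there is nothing to adjust; and without an adjustment, $\overline{h\circ r}$ agrees with $\bar s\circ d$ only on $j(P)$: on a complement $K=\ker(r)$ of $j(P)$ one has $\overline{h\circ r}\,|_K=0$ while $\bar s\circ d\,|_K$ is completely uncontrolled, so the vanishing of $\overline{h\circ r\circ c}$ --- equivalently, $h\circ r\circ c$ landing in ${^\sigma(\omega T)}$ --- does not follow. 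The correct repair, which is the paper's argument, is to use the projectivity of $Q$ to lift $\bar s\circ d$ through the quotient ${^\sigma(T)}\twoheadrightarrow{^\sigma(T/\omega T)}$ to some $y\colon Q\to{^\sigma(T)}$; then $y\circ c=\omega_T\circ x$ for some $x\colon U\to T$ and $h-y\circ j=\omega_T\circ e$ for some $e\colon P\to T$ (because the respective reductions vanish, resp.\ agree on $P$), and the split monomorphism $j$ enters only at the very end, to extend the discrepancy $e$ to $e'\colon Q\to T$ with $e=e'\circ j$, whence $f=(x+e'\circ c)\circ i$.
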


\begin{proof}
The $\bar{A}$-projectivity of $E$ implies that $E$ has projective dimension one as an $A$-module. Then we infer that $U$ is a projective $A$-module.

To prove (1), we take an arbitrary morphism $t\colon N\rightarrow F$ with $F$ a free $\bar{A}$-module. It suffices to show that $t$ factors through $k$. We have a free $A$-module $V$ such that $F$ fits into a short exact sequence.
\begin{align}\label{equ:VF}
0\longrightarrow V \stackrel{\omega_V} \longrightarrow {^\sigma(V)} \stackrel{p}\longrightarrow F\longrightarrow 0
\end{align}
We use the projectivity of $P$ to obtain the morphisms $r$ and $s$ in the following diagram, which satisfy $t\circ b=p\circ s$ and $s\circ a=\omega_V\circ r$.
\begin{align}\label{cd:TCM}
\xymatrix{
 &0\ar[r] &L \ar[ldd]_>>>>>{r} \ar[d]_-{i} \ar[r]^-{a} & P\ar[ldd]_>>>>>{s} \ar[d]^-{j}\ar[r]^-{b} & N \ar[ldd]_>>>>>{t} \ar[d]^-{k} \ar[r] & 0\\
  & 0\ar[r] &U \ar@{.>}[ld]|{x} \ar[r]^>>>>>>>{c} \ar[r] & Q \ar@{.>}[ld]|{y} \ar[r]^>>>>>>{d} & E \ar@{.>}[ld]|{z} \ar[r] &0\\
0\ar[r] & V \ar[r]_-{\omega_V} & {^\sigma(V)} \ar[r]_-{p} & F \ar[r] & 0
}
\end{align}
Since $i$ is a left $(A\mbox{-Proj})$-approximation, there exists $x\colon U\rightarrow V$ satisfying $r=x\circ i$. By $\omega_E=0$ and  Lemma~\ref{lem:factor}, we have a morphism $y\colon Q\rightarrow {^\sigma(V)}$ satisfying $\omega_V\circ x=y\circ c$. Consequently, we have $z\colon E\rightarrow F$ satisfying $p\circ y=z\circ d$. By a diagram-chasing, we have $(s-y\circ j)\circ a=0$. Therefore, there exists $e\colon N\rightarrow {^\sigma(V)}$ satisfying
$$s-y\circ j=e\circ b.$$
 Since $\omega_N=0$ and ${^\sigma(V)}$ is free, we infer from Lemma~\ref{lem:elem} that $e=0$. Consequently, we have $s=y\circ j$. It follows that $t=z\circ k$, as required.

To prove (2), we take an arbitrary morphism $r\colon L\rightarrow V$ with $V$ a free $A$-module. We will obtain the same diagram (\ref{cd:TCM}). Indeed,  we define the free $\bar{A}$-module $F$ using (\ref{equ:VF}). Applying the condition $\omega_N=0$ and Lemma~\ref{lem:factor}, we still obtain the two morphisms $s$ and $t$. Since $k$ is a left $(\bar{A}\mbox{-Proj})$-approximation, we obtain a morphism $z\colon E\rightarrow F$ satisfying $t=z\circ k$. Since $Q$ is projective, we obtain $x$ and $y$ making the bottom diagram commute.

The subtle fact is  as follows: in general,  the commutativity of the rightmost triangle $NEF$ does not imply the commutativity of the middle  triangle $PQ{^\sigma(V)}$. Instead, by a diagram-chasing we have $p\circ (s-y\circ j)=0$. Consequently, there exists $h\colon P\rightarrow V$ such that
$$s-y\circ j=\omega_V\circ h.$$
Another diagram-chasing yields $r-x\circ i=h\circ a$. Since $j\colon P\rightarrow Q$ is a split monomorphism, we have a morphism $h'\colon Q\rightarrow V$ with the property $h=h'\circ j$. Then we have
$$r-x\circ i=h'\circ j\circ a=h'\circ c\circ i.$$
We obtain  $r=(x+h'\circ c)\circ i$. Consequently, the arbitrarily given morphism $r$ factors through $i$. It follows that $i$ is a left $(A\mbox{-Proj})$-approximation.
\end{proof}

\begin{prop}
Assume that  $0\rightarrow L\stackrel{j}\rightarrow P\stackrel{p}\rightarrow N\rightarrow 0$ is a short exact sequence of $A$-modules with $P$ projective and $\omega_N=0$. Then for each $n\geq 1$, $N$ belongs to ${^{\perp_{[1, n]}}(\bar{A}\mbox{-}{\rm Proj})}$ if and only if  $L$ belongs to ${^{\perp_{[1, n]}}(A\mbox{-}{\rm Proj})}$.
\end{prop}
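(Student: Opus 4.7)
The plan is to prove the proposition by induction on $n$, using Proposition~\ref{prop:approx} as the key technical input for the base case.

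For the base case $n=1$, I would construct a commutative diagram of short exact sequences fitting the hypothesis of Proposition~\ref{prop:approx}. Given any free $\bar{A}$-module $E$, write $E = V/\omega V$ for some free $A$-module $V$, so that the canonical resolution~(\ref{equ:VF}) furnishes the bottom sequence $0 \to V \xrightarrow{\omega_V} {}^\sigma V \xrightarrow{q} E \to 0$. For a given morphism $r\colon L \to V$, Lemma~\ref{lem:factor} produces a lift $h\colon P \to {}^\sigma V$ of $\omega_V \circ r$; the composition $q \circ h$ vanishes on $L$ and factors uniquely through $N$ to give $k\colon N \to E$. Conversely, any $k$ yields $h$ and then $r$ by projectivity of $P$. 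Enlarging the bottom sequence by direct-summing with the trivial sequence $0 \to P \xrightarrow{\id} P \to 0 \to 0$ forces the middle vertical $P \to {}^\sigma V \oplus P$, $x \mapsto (h(x), x)$, to be a split monomorphism, so Proposition~\ref{prop:approx}(2) applies. Lemma~\ref{lem:ext1}, invoked in both $A\mbox{-Mod}$ and $\bar{A}\mbox{-Mod}$, is used to translate the Ext-vanishing conditions $L \in {}^{\perp_1}(A\mbox{-}\mathrm{Proj})$ and $N \in {}^{\perp_1}(\bar{A}\mbox{-}\mathrm{Proj})$ into left-approximation conditions which Proposition~\ref{prop:approx}(1) and~(2) interchange.

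For the inductive step, I take an $A$-projective presentation $0 \to L'' \to P'' \to L \to 0$ of $L$ and a $\bar{A}$-projective presentation $0 \to N'' \to F \to N \to 0$ of $N$. Dimension-shift identifies $L \in {}^{\perp_{[1, n+1]}}(A\mbox{-}\mathrm{Proj})$ with the conjunction of $L \in {}^{\perp_1}(A\mbox{-}\mathrm{Proj})$ and $L'' \in {}^{\perp_{[1, n]}}(A\mbox{-}\mathrm{Proj})$, and analogously for $N$ in $\bar{A}\mbox{-Mod}$ (using that $F$ has $A$-projective dimension at most one). The $n=1$ conjuncts are reconciled by the base case. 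For the remaining conjuncts, I construct a short exact sequence $0 \to L'' \to P_\sharp \to N'' \to 0$ with $P_\sharp$ an $A$-projective module (automatically $\omega_{N''} = 0$, since $N'' \subseteq F$ and $\omega_F = 0$), to which the induction hypothesis applies with $L''$, $P_\sharp$, $N''$ in the roles of $L$, $P$, $N$. This new sequence comes from the pullback $P \times_N F \cong P \oplus N''$, which splits because $P$ is $A$-projective and $F \to N$ is surjective, spliced with $L$'s presentation $P'' \twoheadrightarrow L$, and then adjusted in Schanuel fashion to absorb the non-projective summand $N''$ into a projective term.

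The main obstacle lies in the base case: Lemma~\ref{lem:ext1} translates Ext-vanishing into left-approximation of a \emph{syzygy}, whereas Proposition~\ref{prop:approx} concerns approximation of the ends $L$ and $N$ of the two short exact sequences. Reconciling these two levels — that is, showing that the approximation condition produced by Proposition~\ref{prop:approx} on $i$ (or $k$) is exactly the one required by Lemma~\ref{lem:ext1} for $L \in {}^{\perp_1}$ (or $N \in {}^{\perp_1}$) — will be the technical heart of the argument, and may require a preliminary application of Proposition~\ref{prop:approx} to an auxiliary pair of sequences obtained by splicing in projective presentations. The inductive construction of the SES connecting $L''$ and $N''$ while preserving projectivity is a secondary but routine source of bookkeeping.
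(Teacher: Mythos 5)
Your overall strategy (Proposition~\ref{prop:approx} plus Lemma~\ref{lem:ext1}, then induction by dimension shifting along syzygies) is the paper's, and your inductive step is essentially the paper's argument up to routine Schanuel bookkeeping. The genuine gap is the base case, and it is exactly the point you defer as the ``technical heart''. Applying Proposition~\ref{prop:approx} with the given sequence $0\to L\xrightarrow{j} P\to N\to 0$ as the top row and (an enlarged copy of) the sequence (\ref{equ:VF}) as the bottom row only interchanges the statements ``the vertical map $L\to V$ is a left $(A\mbox{-}{\rm Proj})$-approximation \emph{of $L$}'' and ``the induced map $N\to E$ is a left $(\bar{A}\mbox{-}{\rm Proj})$-approximation \emph{of $N$}''. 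By Lemma~\ref{lem:ext1}, however, the conditions $L\in{^{\perp_{1}}(A\mbox{-}{\rm Proj})}$ and $N\in{^{\perp_{1}}(\bar{A}\mbox{-}{\rm Proj})}$ are approximation conditions on the syzygies $\Omega_A(L)$ and $\Omega_{\bar{A}}(N)$, not on $L$ and $N$ themselves, so the interchange you obtain is not the one you need. Nor can you repair this by feeding a projective presentation $0\to L''\to P''\to L\to 0$ of $L$ into Proposition~\ref{prop:approx}: its right-hand end $L$ embeds in the projective module $P$, hence is $\omega$-torsionfree, so the hypothesis ``$\omega$ kills the right end'' fails (unless $L=0$). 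As written, the base case is therefore not proved.

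The missing idea, which is how the paper closes this gap, is to choose the two syzygies \emph{compatibly} before invoking Proposition~\ref{prop:approx}. Take $0\to N'\xrightarrow{a}E\xrightarrow{b}N\to 0$ over $\bar{A}$ with $E$ projective and $0\to L'\xrightarrow{j'}P'\xrightarrow{p'}N'\to 0$ over $A$ with $P'$ projective, lift $p$ through $b$, and form the $3\times 3$ diagram (\ref{cd:horse}) whose middle row is $0\to P'\to P'\oplus P\to P\to 0$. Since $E$ has projective dimension one over $A$, the kernel $Q$ of $P'\oplus P\to E$ is projective, so the top row $0\to L'\xrightarrow{a'}Q\to L\to 0$ is an $A$-projective presentation of $L$: the single module $L'$ serves at once as $\Omega_A(N')$ and as $\Omega_A(L)$. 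Now apply Proposition~\ref{prop:approx} to the two left-hand columns, i.e.\ with top row $0\to L'\to P'\to N'\to 0$ (whose right end \emph{is} annihilated by $\omega$), bottom row $0\to Q\to P'\oplus P\to E\to 0$, and the split monomorphism $\binom{1}{0}$: this shows $a'$ is a left $(A\mbox{-}{\rm Proj})$-approximation iff $a$ is a left $(\bar{A}\mbox{-}{\rm Proj})$-approximation, and Lemma~\ref{lem:ext1} applied to the top and bottom rows converts these into $L\in{^{\perp_{1}}(A\mbox{-}{\rm Proj})}$ and $N\in{^{\perp_{1}}(\bar{A}\mbox{-}{\rm Proj})}$. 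The same diagram then also drives the induction (the dimension shift is read off its top and bottom rows together with the approximation interchange), so your separate pullback construction, while salvageable up to projective direct summands, is unnecessary once the diagram is in place.
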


\begin{proof}
Assume first that $n=1$. Take a short exact sequence $0\rightarrow N'\stackrel{a}\rightarrow E\stackrel{b}\rightarrow N\rightarrow 0$ of $\bar{A}$-modules with $E$ a projective $\bar{A}$-module. Take a short exact sequence $0\rightarrow L'\stackrel{j'}\rightarrow P'\stackrel{p'}\rightarrow N'\rightarrow 0$ of $A$-modules with $P'$ projective. There is a morphism $c\colon P\rightarrow E$ satisfying $p=b\circ c$. Consequently, we have the following commutative diagram with exact rows and columns.
\begin{align}\label{cd:horse}
\xymatrix{
  & 0\ar[d] & 0\ar[d] & 0\ar[d]  \\
 0  \ar[r] & L'\ar[d]_-{j'}\ar[r]^-{a'} & Q\ar[d] \ar[r] & L \ar[d]^-{j}\ar[r] & 0\\
 0  \ar[r] & P'\ar[d]_-{p'}\ar[r]^-{\binom{1}{0}} & P'\oplus P\ar[d]^-{(a\circ p', c)} \ar[r]^-{(0, 1)} & P \ar[d]^-{p}\ar[r] & 0\\
 0\ar[r] & N' \ar[d]\ar[r]^-{a} & E\ar[d] \ar[r]^-{b} & N \ar[r]\ar[d] & 0 \\
 & 0 & 0 & 0
}\end{align}
Since $E$ has projective dimension one as an $A$-module, the $A$-module $Q$ is projective.

We apply Proposition~\ref{prop:approx} to the left half of the diagram, and obtain the following fact: $a'$ is a left $(A\mbox{-Proj})$-approximation if and only if $a$ is a left $(\bar{A}\mbox{-Proj})$-approximation. Here, we use the fact that $\binom{1}{0}$ is a split monomorphism. It follows from Lemma~\ref{lem:ext1} that  $N$ belongs to ${^{\perp_1}(\bar{A}\mbox{-}{\rm Proj})}$ if and only if  $L$ belongs to ${^{\perp_1}(A\mbox{-}{\rm Proj})}$.

For the general case, we observe from the bottom row in (\ref{cd:horse}) that $N$ belongs to  ${^{\perp_{[1, n+1]}}(\bar{A}\mbox{-}{\rm Proj})}$ if and only if $N'$ belongs to ${^{\perp_{[1, n]}}(\bar{A}\mbox{-}{\rm Proj})}$ and $a$ is a left $(\bar{A}\mbox{-Proj})$-approximation. Similarly, from the top row, we have that  the $A$-module $L$ belongs to  ${^{\perp_{[1, n+1]}}(A\mbox{-}{\rm Proj})}$ if and only if $L'$ belongs to ${^{\perp_{[1, n]}}(A\mbox{-}{\rm Proj})}$ and $a'$ is a left $(A\mbox{-Proj})$-approximation. Then we infer the general case  by induction on $n$.
\end{proof}

We have the following immediate consequence.

\begin{cor}\label{cor:ortho}
Assume that  $0\rightarrow L\stackrel{j}\rightarrow P\stackrel{p}\rightarrow N\rightarrow 0$ is a short exact sequence of $A$-modules with $P$ projective and $\omega_N=0$. Then  $N$ belongs to ${^{\perp}(\bar{A}\mbox{-}{\rm Proj})}$ if and only if  $L$ belongs to ${^{\perp}(A\mbox{-}{\rm Proj})}$.
\end{cor}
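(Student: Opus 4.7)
The plan is to observe that the corollary is an immediate consequence of the preceding proposition together with the definition of ${^{\perp}\mathcal{S}}$ as an intersection of the ${^{\perp_{[1,n]}}\mathcal{S}}$.

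More precisely, I would unpack the definitions: by definition,
\[
{^{\perp}(\bar{A}\mbox{-}{\rm Proj})}=\bigcap_{n\geq 1}{^{\perp_{[1,n]}}(\bar{A}\mbox{-}{\rm Proj})}\qquad\text{and}\qquad {^{\perp}(A\mbox{-}{\rm Proj})}=\bigcap_{n\geq 1}{^{\perp_{[1,n]}}(A\mbox{-}{\rm Proj})}.
\]
The preceding proposition, applied to the given short exact sequence $0\to L\to P\to N\to 0$ (with $P$ projective and $\omega_N=0$), shows that for every $n\geq 1$ one has $N\in{^{\perp_{[1,n]}}(\bar{A}\mbox{-}{\rm Proj})}$ if and only if $L\in{^{\perp_{[1,n]}}(A\mbox{-}{\rm Proj})}$. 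Intersecting these equivalences over all $n\geq 1$ yields the desired statement.

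There is essentially no obstacle here: the corollary is a one-line deduction from the proposition, and the only thing to be careful about is that the proposition's hypotheses (namely $P$ projective and $\omega_N=0$) are independent of $n$, so one really can apply it uniformly for every $n$ before taking the intersection. I would therefore present the proof in a single short paragraph that simply cites the proposition and the definition of $^{\perp}(-)$.
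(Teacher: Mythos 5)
Your argument is exactly the paper's: the corollary is stated there as an immediate consequence of the preceding proposition, obtained by applying it for every $n\geq 1$ to the same short exact sequence and using ${^{\perp}\mathcal{S}}=\bigcap_{n\geq 1}{^{\perp_{[1,n]}}\mathcal{S}}$. Your proposal is correct and matches the intended one-line deduction.
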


The  following main result of this section implies that for an $\bar{A}$-module $N$, it is Gorenstein projective if and only if its first syzygy  $\Omega_A(N)$ is a Gorenstein projective $A$-module.

\begin{thm}\label{thm:GP}
Assume that  $\eta\colon 0\rightarrow L\stackrel{j}\rightarrow P\stackrel{p}\rightarrow N\rightarrow 0$ is a short exact sequence of $A$-modules with $P$ projective and $\omega_N=0$. Then  $N$ belongs to $\bar{A}\mbox{-}{\rm GProj}$ if and only if  $L$ belongs to $A\mbox{-}{\rm GProj}$.
\end{thm}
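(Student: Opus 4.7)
I will prove both implications by parallel inductive constructions, combining Lemma~\ref{lem:GP} (the coresolution characterization of Gorenstein projectivity) with Corollary~\ref{cor:ortho} (the $^\perp$-correspondence between $L^i$ over $A$ and $N^i$ over $\bar A$). The common engine is a $3\times 3$ horseshoe diagram of the form~(\ref{cd:horse}); the enabling structural input is that every projective $\bar A$-module has $A$-projective dimension at most one.

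For the direction $N\in\bar A\mbox{-GProj}\Rightarrow L\in A\mbox{-GProj}$, fix an $\bar A$-coresolution $\{0\to N^i\to E^i\to N^{i+1}\to 0\}_{i\ge 0}$ of $N=N^0$ supplied by Lemma~\ref{lem:GP}, and for each $i$ pick a projective $A$-cover $Q^i\twoheadrightarrow N^i$ with kernel $L^i$; then $L^0=L$ by the given $\eta$, and Corollary~\ref{cor:ortho} forces each $L^i\in{}^\perp(A\mbox{-Proj})$. I will construct $A$-short exact sequences $0\to L^i\to P^i\to L^{i+1}\to 0$ with $P^i$ projective over $A$ as the top rows of diagrams of the form (\ref{cd:horse}): bottom row the given $\bar A$-ses, left and right columns the $A$-ses's above, and middle row the split exact $0\to Q^i\to Q^i\oplus Q^{i+1}\to Q^{i+1}\to 0$. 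The middle-column surjection $Q^i\oplus Q^{i+1}\twoheadrightarrow E^i$ is assembled from the composite $Q^i\twoheadrightarrow N^i\hookrightarrow E^i$ on the first summand together with a projective lift of $Q^{i+1}\twoheadrightarrow N^{i+1}$ along $E^i\twoheadrightarrow N^{i+1}$ on the second; its kernel is projective over $A$ because $E^i$ has $A$-projective dimension at most one, and the snake lemma yields the desired top row. Lemma~\ref{lem:GP} then delivers Gorenstein projectivity of $L$.

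The reverse direction is handled by a dual inductive construction: take the top row of the $3\times 3$ diagram to be an $A$-coresolution step $0\to L^i\to P^i\to L^{i+1}\to 0$ of $L$ from Lemma~\ref{lem:GP}, the left column to be the inductive $A$-ses $0\to L^i\to Q^i\to N^i\to 0$, and build the right column $0\to L^{i+1}\to Q^{i+1}\to N^{i+1}\to 0$ (with $Q^{i+1}$ projective over $A$ and $\omega_{N^{i+1}}=0$) together with the middle column $0\to P^i\to Q^i\oplus Q^{i+1}\to E^i\to 0$ and the bottom row $0\to N^i\to E^i\to N^{i+1}\to 0$. The middle-column map $P^i\to Q^i\oplus Q^{i+1}$ is $(\alpha,\beta'\circ q)$, where $\alpha\colon P^i\to Q^i$ extends the inclusion $L^i\hookrightarrow Q^i$ (possible because $\mathrm{Ext}^1_A(L^{i+1},Q^i)=0$), $q\colon P^i\twoheadrightarrow L^{i+1}$ is the quotient from the top row, and $\beta'\colon L^{i+1}\hookrightarrow Q^{i+1}$ is the inclusion in the right column; the snake lemma then produces the bottom row, and Corollary~\ref{cor:ortho} supplies $N^{i+1}\in{}^\perp(\bar A\mbox{-Proj})$, so that Lemma~\ref{lem:GP} finishes the proof. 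I expect the main obstacle to lie precisely here: one must choose the right column so that the induced snake cokernel $E^i$ is genuinely projective over $\bar A$, not merely an $\bar A$-module of $A$-projective dimension at most one. In the forward direction $E^i$ is projective over $\bar A$ by hypothesis, while in the reverse direction this projectivity must be engineered through a careful selection of $Q^{i+1}$ (for instance by taking an appropriate projective $A$-resolution of an auxiliary $\bar A$-module built from the twist of $P^i$ modulo $\omega$), and that is the delicate technical step of the argument.
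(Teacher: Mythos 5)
Your ``only if'' direction (from $N\in\bar{A}\mbox{-GProj}$ to $L\in A\mbox{-GProj}$) is essentially the paper's own argument: the same horseshoe-type $3\times 3$ diagram with split middle row, middle-column kernel projective because each $E^i$ has $A$-projective dimension one, and Corollary~\ref{cor:ortho} plus Lemma~\ref{lem:GP} to conclude. That half is fine.

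The reverse direction, however, has a genuine gap, and it is exactly the one you flag yourself. In your construction $E^i=\mathrm{coker}\bigl(P^i\to Q^i\oplus Q^{i+1}\bigr)$ there is no reason that $E^i$ is even annihilated by $\omega$ (an $A$-extension of two $\bar A$-modules need not be an $\bar A$-module, e.g.\ $\mathbb{Z}/p^2$ as an extension of $\mathbb{Z}/p$ by $\mathbb{Z}/p$ with $\omega=p$), let alone projective over $\bar A$; and your promise to ``engineer'' this by a careful choice of $Q^{i+1}$ is left unexecuted. Moreover, even the existence of the right column $0\to L^{i+1}\to Q^{i+1}\to N^{i+1}\to 0$ with $\omega_{N^{i+1}}=0$ for an arbitrary cosyzygy $L^{i+1}$ is not justified; a priori not every Gorenstein projective $A$-module is known to be a first $A$-syzygy of an $\bar A$-module. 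The paper resolves both points at once by \emph{not} taking an arbitrary coresolution step of $L$ as the top row. Instead, given a step $0\to L\xrightarrow{x}Q\to L_1'\to 0$ with $Q$ projective and $L_1'$ Gorenstein projective, it enlarges the top row to $0\to L\xrightarrow{\binom{x}{j}}Q\oplus P\to L_1\to 0$, uses Lemma~\ref{lem:factor} applied to $\eta$ and $x$ to produce $y\colon P\to{^\sigma(Q)}$ with $y\circ j=\omega_Q\circ x$, and takes the middle column to be $\begin{pmatrix}\omega_Q&0\\0&1\end{pmatrix}\colon Q\oplus P\to{^\sigma(Q)}\oplus P$, whose cokernel ${^\sigma(Q)}/\omega Q$ is a projective $\bar A$-module \emph{by construction} -- this is the missing idea. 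The pushout argument together with $\mathrm{Ext}^1_A(L_1',P)=0$ gives $L_1\simeq P\oplus L_1'$, so the new cosyzygy is still Gorenstein projective and in particular $\omega$-torsionfree; combined with $\omega_N=0$ this forces the bottom-row maps to be monomorphisms (left exactness of the bottom row is another point your snake-lemma sketch would need but gets only because all vertical maps are injective, which in the paper requires this torsionfreeness argument). Finally, the construction outputs the right column $0\to L_1\to{^\sigma(Q)}\to N_1\to 0$ with $\omega_{N_1}=0$, i.e.\ data of exactly the same shape as $\eta$, which is what makes the iteration legitimate. Without these ingredients your induction does not get off the ground.
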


\begin{proof}
For the ``only if" part, we assume that $N$ is a Gorenstein projective $\bar{A}$-module. By $N\in {^\perp(\bar{A}\mbox{-Proj})}$ and Corollary~\ref{cor:ortho}, we deduce $L\in {^\perp(A\mbox{-Proj})}$.

We have a short exact sequence $0\rightarrow N\stackrel{a} \rightarrow E\stackrel{b}\rightarrow N_1\rightarrow 0$ of $\bar{A}$-modules with $E$ projective and $N_1$ Gorenstein projective. Take a short exact sequence $0\rightarrow L_1\stackrel{j_1}\rightarrow P_1\stackrel{p_1}\rightarrow N_1\rightarrow 0$ of $A$-modules with $P_1$ projective. Therefore, we have a commutative diagram with exact rows and columns.
\begin{align*}
\xymatrix{
  & 0\ar[d] & 0\ar[d] & 0\ar[d]  \\
 0  \ar[r] & L\ar[d]_-{j}\ar[r] & Q\ar[d] \ar[r] & L_1 \ar[d]^-{j_1}\ar[r] & 0\\
 0  \ar[r] & P\ar[d]_-{p}\ar[r]^-{\binom{1}{0}} & P\oplus P_1\ar[d]^-{(a\circ p, c)} \ar[r]^-{(0, 1)} & P_1 \ar[d]^-{p_1}\ar[r] & 0\\
 0\ar[r] & N \ar[d]\ar[r]^-{a} & E\ar[d] \ar[r]^-{b} & N_1 \ar[r]\ar[d] & 0 \\
 & 0 & 0 & 0
}\end{align*}
Here, $c\colon P_1\rightarrow E$ is a morphism satisfying $p_1=b\circ c$. Consider the rightmost column in the diagram above.  By $N_1\in {^\perp(\bar{A}\mbox{-Proj})}$ and Corollary~\ref{cor:ortho}, we deduce $L_1\in {^\perp(A\mbox{-Proj})}$. By iterating the argument above, we obtain short exact sequences
$$0\longrightarrow L_i\longrightarrow P_i\longrightarrow L_{i+1}\longrightarrow 0$$
of $A$-modules with $P_i$ projective and $L_{i+1}\in  {^\perp(A\mbox{-Proj})}$, for all $i\geq 1$. By Lemma~\ref{lem:GP}, we deduce that the $A$-module $L$ is Gorenstein projective.

For the ``if" part, we assume that the $A$-module $L$ is Gorenstein projective. By $L\in {^\perp(A\mbox{-Proj})}$ and Corollary~\ref{cor:ortho}, we deduce $N\in {^\perp(\bar{A}\mbox{-Proj})}$.

 Take a short exact sequence $0\rightarrow L \stackrel{x} \rightarrow Q \rightarrow  L'_1\rightarrow 0$ with $Q$ projective and $L'_1$ Gorenstein projective. Consider the following diagram obtained by taking a pushout.
\[\xymatrix{
0\ar[r] & L \ar[d]_-{j} \ar[r]^-x & Q \ar@{.>}[d]\ar[r] & L'_1\ar@{=}[d]\ar[r] & 0\\
0\ar[r] & P \ar@{.>}[r] &L_1 \ar[r] & L'_1\ar[r] & 0
}\]
Since ${\rm Ext}_A^1(L'_1, P)=0$, we infer that $L_1\simeq P\oplus L'_1$. In particular, $L_1$ is Gorenstein projective. Applying Lemma~\ref{lem:factor} to the exact sequence $\eta$ and the  morphism $x\colon L\rightarrow Q$, we obtain a morphism $y\colon P\rightarrow {^\sigma(Q)}$ satisfying $y\circ j=\omega_Q\circ x$. Consequently, we obtain the following commutative diagram with exact rows and columns.
\[\xymatrix{
  & 0\ar[d] & 0\ar[d] &   \\
 0  \ar[r] & L\ar[d]_-{j}\ar[r]^{\binom{x}{j}} & Q\oplus P\ar[d]^-{{\tiny \begin{pmatrix} \omega_Q & 0\\
 0 & 1\end{pmatrix}}} \ar[r] & L_1 \ar[d]^-{j_1}\ar[r] & 0\\
 0  \ar[r] & P\ar[d]_-{p}\ar[r]^-{\binom{y}{1}} & {^\sigma(Q)}\oplus P\ar[d]^-{(q, 0)} \ar[r]^-{(1, -y)} & {^\sigma(Q)} \ar[d]\ar[r] & 0\\
& N \ar[d]\ar[r]^-{a} & E_1\ar[d] \ar[r] & N_1 \ar[r]\ar[d] & 0 \\
 & 0 & 0 & 0
}\]
Here, $q\colon {^\sigma(Q)}\rightarrow E_1$ is the cokernel of $\omega_Q\colon Q\rightarrow {^\sigma(Q)}$. It follows that $E_1$ is a projective $\bar{A}$-module. By the Five Lemma, we infer that the kernel of $a$ is isomorphic to the kernel of $j_1$. Since $\omega_N=0$ and $L_1$ is $\omega$-torsionfree, the common kernel has to be zero. In other words, both $a$ and $j_1$ are monomorphisms.

Consider the rightmost column in the diagram above. By $L_1\in {^\perp(A\mbox{-Proj})}$ and Corollary~\ref{cor:ortho}, we deduce $N_1\in {^\perp(\bar{A}\mbox{-Proj})}$. In other words, we obtain a short exact sequence of $\bar{A}$-modules
$$0\longrightarrow N \longrightarrow E \longrightarrow N_1\longrightarrow 0$$
with $E$ projective and $N_1\in {^\perp(\bar{A}\mbox{-Proj})}$.

By  iterating the argument above, we obtain short exact sequences
$$0\longrightarrow N_i\longrightarrow E_i\longrightarrow N_{i+1}\longrightarrow 0$$
of $\bar{A}$-modules with $E_i$ projective and $N_{i+1}\in  {^\perp(\bar{A}\mbox{-Proj})}$ for all $i\geq 1$. By Lemma~\ref{lem:GP} again, we infer  that the $\bar{A}$-module $N$ is Gorenstein projective.
\end{proof}

\subsection{Gorenstein projective dimensions}

For any $A$-module $L$, we denote by ${\rm Gpd}_A(L)$ its \emph{Gorenstein projective dimension}, which takes value in $\mathbb{N}\cup \{+\infty\}$. For each $n\geq 0$, we have ${\rm Gpd}_A(L)\leq n$ if and only if there exists an exact sequence $0\rightarrow G^{-n}\rightarrow \cdots \rightarrow G^{-1}\rightarrow G^0\rightarrow L\rightarrow 0$ of $A$-modules with each $G^i$ Gorenstein projective. In particular, ${\rm Gpd}_A(L)=0$ if and only if $L$ is Gorenstein projective.

 The ring $A$ is called \emph{left $d$-Gorenstein} for some $d\geq 0$ provided that ${\rm Gpd}_A(L)\leq d$ for any $A$-module $L$; see \cite[Definition~6.8 and Theorem~6.9]{Bel00}. The main examples of left $d$-Gorenstein rings are $d$-Gorenstein rings. Here, we recall that a two-sided noetherian ring $A$ is \emph{$d$-Gorenstein}, if $A$ has finite selfinjective dimension at most $d$ on both sides.

The following immediate consequence of \cite[Proposition~2.18]{Holm} will be useful.

\begin{lem}\label{lem:Gpd}
Assume that $0\rightarrow L\rightarrow M\rightarrow N\rightarrow 0$ is an exact sequence of $A$-modules. Suppose that ${\rm Gpd}_A(M)\leq i_0$ and that  $j_0\geq 0$. Then ${\rm Gpd}_A(L)\leq i_0+j_0$ if and only if ${\rm Gpd}_A(N)\leq i_0+j_0+1$. \hfill $\square$
\end{lem}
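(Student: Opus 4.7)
The plan is to deduce the lemma directly from the standard ``two-out-of-three''-style bounds for Gorenstein projective dimension along a short exact sequence, which are exactly what Holm's Proposition~2.18 packages. Writing $d=i_0+j_0$, the hypothesis $j_0\geq 0$ guarantees ${\rm Gpd}_A(M)\leq i_0\leq d$, so the asymmetric shift by one in the conclusion comes solely from the positions of $L$ and $N$ in the sequence $0\rightarrow L\rightarrow M\rightarrow N\rightarrow 0$.

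For the ``only if'' direction, I will assume ${\rm Gpd}_A(L)\leq d$. Together with ${\rm Gpd}_A(M)\leq d$, the standard bound
\[
{\rm Gpd}_A(N)\leq \max\{{\rm Gpd}_A(L)+1,\ {\rm Gpd}_A(M)\}
\]
from Holm's Proposition~2.18 gives ${\rm Gpd}_A(N)\leq \max\{d+1,\ i_0\}=d+1$ at once.

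For the converse, I will assume ${\rm Gpd}_A(N)\leq d+1$. Combining with the given ${\rm Gpd}_A(M)\leq i_0$, the companion bound
\[
{\rm Gpd}_A(L)\leq \max\{{\rm Gpd}_A(M),\ {\rm Gpd}_A(N)-1\}
\]
yields ${\rm Gpd}_A(L)\leq \max\{i_0,\ d\}=d$, as required.

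I do not anticipate any real obstacle: the paper itself flags the result as an ``immediate consequence'' of Holm, so the entire argument reduces to invoking the two displayed inequalities once each. The only piece of bookkeeping worth emphasising is that the inequality $i_0\leq d$, forced by $j_0\geq 0$, is precisely what makes both maxima collapse cleanly to the claimed values $d+1$ and $d$; without that hypothesis the two directions would not be equivalent.
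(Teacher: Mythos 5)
Your argument is exactly the intended one: the paper offers no written proof beyond citing Holm's Proposition~2.18, and your derivation simply unpacks that citation via the two standard short-exact-sequence bounds $\mathrm{Gpd}_A(N)\leq\max\{\mathrm{Gpd}_A(L)+1,\mathrm{Gpd}_A(M)\}$ and $\mathrm{Gpd}_A(L)\leq\max\{\mathrm{Gpd}_A(M),\mathrm{Gpd}_A(N)-1\}$, with the observation $i_0\leq i_0+j_0$ doing the bookkeeping. This is correct and essentially the same approach as the paper.
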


We now somehow reformulate Theorem~\ref{thm:GP} as a change-of-rings theorem on Gorenstein projective dimensions. It is a noncommutative analogue of \cite[Theorem~4.1]{BM}, and might be viewed as an infinite version of \cite[(2.2.8)~Theorem (Change of Rings)]{Christ}; compare \cite[(4.32)~Lemma]{ABr}.

\begin{thm}\label{thm:Gpd}
Let $N$ be a nonzero $\bar{A}$-module. Then we have ${\rm Gpd}_A(N)={\rm Gpd}_{\bar{A}}(N)+1$. Consequently, if $A$ is left $d$-Gorenstein for $d\geq 1$, then $\bar{A}$ is left $(d-1)$-Gorenstein.
\end{thm}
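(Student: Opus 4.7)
The plan is to prove $\mathrm{Gpd}_A(N)=\mathrm{Gpd}_{\bar{A}}(N)+1$ by reducing both sides to the Gorenstein projective dimension of a first $A$-syzygy $L$, obtained from a short exact sequence $0\to L\to P\to N\to 0$ with $P$ an $A$-projective module.

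First I would verify that $\mathrm{Gpd}_A(N)\geq 1$ whenever $N$ is nonzero: any Gorenstein projective $A$-module $G$ embeds into some projective $A$-module, and since $\omega$ is regular, projective $A$-modules are $\omega$-torsionfree, whence $\omega_G$ is injective; as $\omega_N=0$ and $N\neq 0$, the module $N$ cannot be Gorenstein projective over $A$. Applying Lemma~\ref{lem:Gpd} with $i_0=0$ to $0\to L\to P\to N\to 0$ then yields the shift identity $\mathrm{Gpd}_A(L)=\mathrm{Gpd}_A(N)-1$, with the infinite case handled uniformly.

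It then suffices to prove $\mathrm{Gpd}_A(L)=\mathrm{Gpd}_{\bar{A}}(N)$, which I would establish by induction on $n\geq 0$ via the bi-implication
\[
 \mathrm{Gpd}_{\bar{A}}(N)\leq n\;\Longleftrightarrow\; \mathrm{Gpd}_A(\Omega_A(N))\leq n,
\]
the base case $n=0$ being precisely Theorem~\ref{thm:GP}. For the inductive step I would choose a short exact sequence $\xi\colon 0\to N'\to E\to N\to 0$ of $\bar{A}$-modules with $E$ an $\bar{A}$-projective module. The crucial input is that $\bar{A}$ has $A$-projective dimension exactly one, witnessed by the length-one resolution $0\to A\to A\to \bar{A}\to 0$ of left $A$-modules whose left-hand arrow is multiplication by $\omega$ (injective because $\omega$ is regular); hence $E$ too has $A$-projective dimension at most one, and the horseshoe construction applied to $\xi$ yields a short exact sequence of $A$-modules
\[
 0\to \Omega_A(N')\to \Omega_A(E)\to \Omega_A(N)\to 0
\]
in which $\Omega_A(E)$ is actually $A$-projective. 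Lemma~\ref{lem:Gpd} applied to this sequence, together with the same lemma applied over $\bar{A}$ to $\xi$, simultaneously reduces both sides of the bi-implication at level $n$ to the bi-implication at level $n-1$ for the module $N'$. When $N'$ is nonzero the induction hypothesis closes the argument; if instead $N'=0$ then $N\simeq E$ is $\bar{A}$-projective and both sides of the original bi-implication already hold trivially.

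The consequence on left $d$-Gorenstein rings is then immediate: for any nonzero $\bar{A}$-module the equality gives $\mathrm{Gpd}_{\bar{A}}(N)\leq d-1$, and the zero module poses no obstruction. The main technical point I anticipate is ensuring that in the horseshoe diagram $\Omega_A(E)$ is genuinely $A$-projective rather than merely of finite $A$-projective dimension; this is precisely where regularity of $\omega$ enters, via the length-one $A$-projective resolution of $\bar{A}$.
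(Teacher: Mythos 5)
Your proposal is correct and follows essentially the same route as the paper: reduce to the bi-implication between $\mathrm{Gpd}_{\bar{A}}(N)\leq n$ and a statement about $A$-dimensions, with Theorem~\ref{thm:GP} as the base case and an induction on $n$ using a short exact sequence $0\to N'\to E\to N\to 0$ with $E$ an $\bar{A}$-projective of $A$-projective dimension at most one, together with Lemma~\ref{lem:Gpd}. The only (immaterial) difference is bookkeeping: you apply Lemma~\ref{lem:Gpd} to the top row of the horseshoe diagram (syzygies, with projective middle term, $i_0=0$), whereas the paper applies it to the bottom row with $i_0=1$.
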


\begin{proof}
Since $\omega_N=0$, $N$ is not isomorphic to a submodule of any projective $A$-module. In particular, $N$ is a not Gorenstein projective $A$-modules, or equivalently, ${\rm Gpd}_A(N)\geq 1$.

For the required result, it suffices to prove the following claim: for any $\bar{A}$-module $N$ and each $n\geq 0$, we have that ${\rm Gpd}_{\bar{A}}(N)\leq n$ if and only if ${\rm Gpd}_{A}(N)\leq n+1$. We will prove the claim by induction on $n$. For this end, we fix a short exact sequence of $A$-modules
\begin{align}\label{equ:LPN}
0\longrightarrow L\stackrel{j}\longrightarrow P\stackrel{p}\longrightarrow N\longrightarrow 0
\end{align}
with  $P$ projective.

 Recall that ${\rm Gpd}_{\bar{A}}(N)\leq 0$ if and only if $N\in \bar{A}\mbox{-GProj}$. By Theorem~\ref{thm:GP}, the latter condition is equivalent to $L\in A\mbox{-GProj}$. Then we are done by the following result: applying   Lemma~\ref{lem:Gpd} to (\ref{equ:LPN}) with $i_0=0=j_0$, we have that $L\in A\mbox{-GProj}$ if and only if ${\rm Gpd}_A(N)\leq 1$. This proves the base case for the induction.

Assume that the claim for  the case $n_0$ is already established. We take a short exact sequence of $\bar{A}$-modules
\begin{align}\label{equ:N'N}
0\longrightarrow N'\stackrel{a}\longrightarrow E \stackrel{b}\longrightarrow N\longrightarrow 0
\end{align}
with  $E$ a projective $\bar{A}$-module. Then we obtain the commutative diagram (\ref{cd:horse}). We observe from (\ref{equ:N'N}) that ${\rm Gpd}_{\bar{A}}(N)\leq n_0+1$ is equivalent to ${\rm Gpd}_{\bar{A}}(N')\leq  n_0$. Applying the induction hypothesis to $N'$, the latter condition is equivalent to ${\rm Gpd}_A(N')\leq n_0+1$.  The $A$-module $E$ in (\ref{equ:N'N}) is of projective dimension one.  Therefore, applying Lemma~\ref{lem:Gpd} to (\ref{equ:N'N}) with $i_0=1$ and $j_0=n_0$, we have  that ${\rm Gpd}_A(N')\leq n_0+1$ if and only if ${\rm Gpd}_A(N)\leq n_0+2$. Combining all these results, we infer that ${\rm Gpd}_{\bar{A}}(N)\leq n_0+1$ if and only if ${\rm Gpd}_A(N)\leq n_0+2$. This completes the proof.
\end{proof}

We denote by ${\rm pd}_A(N)$ the projective dimension of an $A$-module $L$.

\begin{cor}\label{cor:pd1}
Let $N$ be a nonzero Gorenstein projective $\bar{A}$-module. Then ${\rm pd}_A(N)< +\infty$ if and only if ${\rm pd}_A(N)=1$.
\end{cor}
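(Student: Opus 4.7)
The plan is to derive the corollary as an immediate consequence of Theorem~\ref{thm:Gpd}, combined with the well-known comparison between projective and Gorenstein projective dimensions. The direction ${\rm pd}_A(N)=1 \Rightarrow {\rm pd}_A(N)<+\infty$ is trivial, so the work is entirely in the other direction.

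First I would observe that ${\rm pd}_A(N) \geq 1$: since $N\neq 0$ and $\omega_N=0$, while every projective $A$-module is $\omega$-torsionfree (as $\omega$ is regular in $A$), Lemma~\ref{lem:elem} rules out any monomorphism from $N$ into a projective $A$-module, so $N$ is not projective as an $A$-module. Next, since $N$ is a nonzero Gorenstein projective $\bar{A}$-module we have ${\rm Gpd}_{\bar{A}}(N)=0$, and Theorem~\ref{thm:Gpd} then yields ${\rm Gpd}_A(N)=1$.

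To finish, I would invoke the standard comparison result (see \cite[Proposition~2.27]{Holm}): for any $A$-module $M$ with ${\rm pd}_A(M)<+\infty$, one has ${\rm Gpd}_A(M)={\rm pd}_A(M)$. Applying this to $N$ under the hypothesis ${\rm pd}_A(N)<+\infty$ gives ${\rm pd}_A(N)={\rm Gpd}_A(N)=1$, as desired.

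If one prefers a self-contained argument avoiding the external citation, the alternative route is to fix a short exact sequence $0\to L\to P\to N\to 0$ with $P$ projective, apply Theorem~\ref{thm:GP} to conclude $L\in A\mbox{-}{\rm GProj}$, and observe that ${\rm pd}_A(N)<+\infty$ forces ${\rm pd}_A(L)<+\infty$. A Gorenstein projective module with finite projective dimension is projective (a direct dimension-shifting argument using the defining vanishing ${\rm Ext}_A^i(L,Q)=0$ for all projective $Q$ and $i\geq 1$). Hence $L$ is projective, so ${\rm pd}_A(N)\leq 1$, and combining with ${\rm pd}_A(N)\geq 1$ gives equality. There is no real obstacle; the only point requiring attention is the justification that a Gorenstein projective module of finite projective dimension is projective, but this is a completely standard lemma in Gorenstein homological algebra.
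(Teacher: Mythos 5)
Your main argument is correct and is essentially the paper's own proof: both deduce ${\rm Gpd}_A(N)=1$ from Theorem~\ref{thm:Gpd} and then use \cite[Proposition~2.27]{Holm} to identify ${\rm Gpd}_A(N)$ with ${\rm pd}_A(N)$ when the latter is finite. The alternative self-contained route you sketch is also sound, but the primary argument matches the paper, so nothing further is needed.
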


\begin{proof}
 The ``if" part is trivial. Assume that ${\rm pd}_A(N)< +\infty$. Then we have ${\rm Gpd}_A(N)={\rm pd}_A(N)$ by \cite[Proposition~2.27]{Holm}.  Since $N$ is a Gorenstein projective $\bar{A}$-module, Theorem~\ref{thm:Gpd} implies that ${\rm Gpd}_A(N)=1$. This implies the ``only if" part.
\end{proof}

The following result might be viewed as a partial  infinite version of  \cite[(1.4.6)~Corollary]{Christ}; compare \cite[(4.31)~Corollary]{ABr}.

\begin{cor}
Let $M$ be a $\omega$-torsionfree $A$-module. Then we have ${\rm Gpd}_{\bar{A}}(M/{\omega M})\leq {\rm Gpd}_A(M)$. In particular, if $M$ belongs to $A\mbox{-}{\rm GProj}$, then $M/{\omega M}$ belongs to $\bar{A}\mbox{-}{\rm GProj}.$
\end{cor}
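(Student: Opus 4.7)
The idea is to realize $M/\omega M$, up to the twist $(-)^\sigma$, as the third term of a short exact sequence whose first two terms both have finite Gorenstein projective dimension equal to ${\rm Gpd}_A(M)$, and then invoke Lemma~\ref{lem:Gpd} together with Theorem~\ref{thm:Gpd}. Assume ${\rm Gpd}_A(M)=n<+\infty$; otherwise there is nothing to prove. If $M=\omega M$, the conclusion is trivial, so assume $M/\omega M\neq 0$.

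First I would verify the elementary identity $\sigma(\omega)=\omega$: from $\omega\cdot\omega=\sigma(\omega)\omega$ and the regularity of $\omega$ on the right, we get $\sigma(\omega)=\omega$, whence $\sigma$ fixes the two-sided ideal $(\omega)=A\omega A$ and the twist $(-)^\sigma$ restricts to an autoequivalence of $\bar{A}\mbox{-Mod}$, preserving Gorenstein projective dimensions over $\bar{A}$. Next, since $M$ is $\omega$-torsionfree, the canonical morphism $\omega_M\colon M\rightarrow {^\sigma(M)}$ is a monomorphism, and its cokernel is (as an abelian group) $M/\omega M$ equipped with the $A$-action making it isomorphic to ${^\sigma(M/\omega M)}$. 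Thus we have a short exact sequence of $A$-modules
\[
0\longrightarrow M\stackrel{\omega_M}\longrightarrow {^\sigma(M)}\longrightarrow {^\sigma(M/\omega M)}\longrightarrow 0.
\]

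Since $(-)^\sigma$ is an autoequivalence of $A\mbox{-Mod}$ preserving projectives and short exact sequences, it preserves Gorenstein projective dimensions over $A$; in particular ${\rm Gpd}_A({^\sigma(M)})=n$. Applying Lemma~\ref{lem:Gpd} to the displayed sequence with $i_0=n$ and $j_0=0$, the hypothesis ${\rm Gpd}_A(M)\leq n$ yields ${\rm Gpd}_A({^\sigma(M/\omega M)})\leq n+1$. Since ${^\sigma(M/\omega M)}$ is a nonzero $\bar{A}$-module, Theorem~\ref{thm:Gpd} gives
\[
{\rm Gpd}_{\bar{A}}({^\sigma(M/\omega M)})={\rm Gpd}_A({^\sigma(M/\omega M)})-1\leq n.
\]
Using that $(-)^\sigma$ preserves ${\rm Gpd}_{\bar{A}}$, we conclude ${\rm Gpd}_{\bar{A}}(M/\omega M)\leq n={\rm Gpd}_A(M)$. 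The ``in particular'' statement is the case $n=0$.

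The only step that requires any care is the identification of the cokernel of $\omega_M$ with ${^\sigma(M/\omega M)}$ and the verification that $(-)^\sigma$ restricts to $\bar{A}$-modules; this is the whole point of noticing $\sigma(\omega)=\omega$. Once that is in place, the proof reduces to a direct combination of Lemma~\ref{lem:Gpd} and Theorem~\ref{thm:Gpd}, with no further obstacle.
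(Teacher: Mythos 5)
Your proof is correct and follows essentially the same route as the paper: both realize $M/\omega M$ (up to twist) as the cokernel of the canonical multiplication-by-$\omega$ map, then combine Lemma~\ref{lem:Gpd} with Theorem~\ref{thm:Gpd}. The only cosmetic difference is that the paper uses the sequence $0\to{^{\sigma^{-1}}(M)}\to M\to M/\omega M\to 0$, which avoids your (correct) extra verification that $\sigma(\omega)=\omega$ and that the twist preserves $\bar{A}$-Gorenstein projective dimension.
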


\begin{proof}
Since $\omega$ is $M$-regular, the following sequence is short exact.
$$0\longrightarrow {^{\sigma^{-1}}(M)}\stackrel{\omega_{^{\sigma^{-1}}(M)}}\longrightarrow M\longrightarrow M/{\omega M}\longrightarrow 0$$
Here, we identify $M$ with ${^\sigma(^{\sigma^{-1}}(M))}$. We observe that $M$ and $^{\sigma^{-1}}(M)$ have the same Gorenstein projective dimension. Therefore, by Lemma~\ref{lem:Gpd} we have ${\rm Gpd}_A(M/{\omega M})\leq {\rm Gpd}_A(M)+1$. Then the required inequality follows from Theorem~\ref{thm:Gpd}. The final statement is immediate, since any Gorenstein projective $A$-module is $\omega$-torsionfree.
\end{proof}

The following result slightly  strengthens  Theorem~\ref{thm:GP}.

\begin{thm}\label{thm:GP-2}
Assume that  $0\rightarrow L \rightarrow G \rightarrow N\rightarrow 0$ is a short exact sequence of $A$-modules with $G\in A\mbox{-}{\rm GProj}$ and $\omega_N=0$. Then  $N$ belongs to $\bar{A}\mbox{-}{\rm GProj}$ if and only if  $L$ belongs to $A\mbox{-}{\rm GProj}$.
\end{thm}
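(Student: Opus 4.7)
The plan is to deduce Theorem~\ref{thm:GP-2} from Theorem~\ref{thm:Gpd} together with Lemma~\ref{lem:Gpd}, thereby avoiding any repetition of the pushout constructions used to establish Theorem~\ref{thm:GP}. The idea is simply to pass to Gorenstein projective dimensions and perform a single dimension shift.

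First I would dispose of the trivial case $N=0$: then $L\simeq G$ belongs to $A\mbox{-}{\rm GProj}$ and both sides of the claimed equivalence hold. So assume $N\neq 0$. Since $G\in A\mbox{-}{\rm GProj}$, we have ${\rm Gpd}_A(G)\leq 0$, so Lemma~\ref{lem:Gpd} applied to the given short exact sequence with $i_0=0$ and $j_0=0$ yields the equivalence
$$L\in A\mbox{-}{\rm GProj}\ \Longleftrightarrow\ {\rm Gpd}_A(L)\leq 0\ \Longleftrightarrow\ {\rm Gpd}_A(N)\leq 1.$$
Because $N$ is a nonzero $\bar{A}$-module, Theorem~\ref{thm:Gpd} gives the identity ${\rm Gpd}_A(N)={\rm Gpd}_{\bar{A}}(N)+1$, so the final inequality is equivalent to ${\rm Gpd}_{\bar{A}}(N)\leq 0$, i.e., to $N\in \bar{A}\mbox{-}{\rm GProj}$. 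Chaining the three equivalences yields the theorem.

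I do not anticipate any genuine obstacle: the substantive work has already been invested in Theorem~\ref{thm:GP} and in its dimension-theoretic reformulation Theorem~\ref{thm:Gpd}, and the present statement is essentially a bookkeeping consequence. The only minor point to watch is the convention on the zero module, which should be regarded as Gorenstein projective; with this convention the equivalence ${\rm Gpd}_A(L)\leq 0 \Longleftrightarrow L\in A\mbox{-}{\rm GProj}$ holds without caveat. A more hands-on alternative, for instance embedding $G$ into a projective $A$-module $P$ via a short exact sequence $0\to G\to P\to G'\to 0$ with $G'\in A\mbox{-}{\rm GProj}$ and then comparing with a projective presentation of $N$, is feasible but strictly more laborious than the above dimension-shift argument.
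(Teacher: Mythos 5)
Your proof is correct and takes essentially the same route as the paper: both directions come down to the formula ${\rm Gpd}_A(N)={\rm Gpd}_{\bar{A}}(N)+1$ of Theorem~\ref{thm:Gpd} combined with a one-step dimension shift along the given short exact sequence, using ${\rm Gpd}_A(G)\leq 0$. The only cosmetic difference is that for the implication $N\in \bar{A}\mbox{-}{\rm GProj}\Rightarrow L\in A\mbox{-}{\rm GProj}$ the paper cites \cite[(3.12)~Lemma]{ABr} or \cite[Theorem~2.20(iv)]{Holm}, whereas you invoke Lemma~\ref{lem:Gpd} with $i_0=j_0=0$, which is equally valid (and your explicit treatment of the trivial case $N=0$, needed before applying Theorem~\ref{thm:Gpd}, is a sensible precaution).
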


\begin{proof}
If $L$ belongs to $A\mbox{-}{\rm GProj}$, we have ${\rm Gpd}_A(N)\leq 1$. Then Theorem~\ref{thm:Gpd} implies that ${\rm Gpd}_{\bar{A}}(N)\leq 0$, that is,  $N$ belongs to $\bar{A}\mbox{-}{\rm GProj}$.

Conversely, if $N$ belongs to $\bar{A}\mbox{-}{\rm GProj}$, Theorem~\ref{thm:Gpd} implies ${\rm Gpd}_A(N)\leq 1$. Then by \cite[(3.12)~Lemma]{ABr} or \cite[Theorem~2.20(iv)]{Holm}, we infer that $L$ belongs to $A\mbox{-}{\rm GProj}$.
\end{proof}

\section{The category of module factorizations}

In this section, we study the category of module factorizations of $\omega$. It turns out that it is equivalent to the module category over a matrix ring; see Proposition~\ref{prop:F-Gamma}. We prove that the subcategory formed by module factorizations with Gorenstein projective components is Frobenius exact; see Proposition~\ref{prop:GP}

\subsection{Module factorizations as modules}

Recall that a \emph{module factorization} of $\omega$ over $A$ is a
quadruple $X=(X^0, X^1; d_X^0 d_X^1)$, where each $X^i$ is a  left $A$-module, $d_X^0\colon X^0\rightarrow X^1$ and $d_X^1\colon X^1\rightarrow {^\sigma(X^0)}$  are morphisms of $A$-modules satisfying
$$d_X^1\circ d_X^0=\omega_{X^0} \mbox{ and }  {^\sigma(d_X^0)}\circ d_X^1=\omega_{X^1}.$$
The module factorization might be visualized as follows.
$$X^0\stackrel{d_X^0}\rightarrow X^1\stackrel{d_X^1}\rightsquigarrow X^0$$
We call $X^i$ the $i$-th component of $X$, and $d_X^i$ the $i$-th differntial.

Let $Y=(Y^0, Y^1; d_Y^0, d_Y^1)$ be another module factorization. A morphism $(f^0, f^1)\colon X\rightarrow Y$ between module factorizations consists of two morphisms $f^i\colon X^i\rightarrow Y^i$ of $A$-modules, which satisfy
$$f^1\circ d_X^0=d_Y^0\circ f^0 \mbox{  and } {^\sigma(f^0)}\circ d_X^1=d_Y^1\circ f^1.$$
The composition of morphisms is given componentwise. We denote by $\mathbf{F}(A; \omega)$ the category of module factorizations of $\omega$.

For each module factorization $X$, we define its \emph{shifted module factorization} $S(X)=(X^1, {^\sigma(X^0)}; -d_X^1, -{^\sigma(d_X^0)})$. This gives rise to the shift endofunctor
$$S\colon \mathbf{F}(A; \omega)\longrightarrow \mathbf{F}(A; \omega),$$
which sends any morphism $(f^0, f^1)\colon X\rightarrow Y$ to $(f^1, {^\sigma(f^0)})\colon S(X)\rightarrow S(Y)$. It is an autoequivalence.  We mention that the two minus signs in $S(X)$ are consistent with the ones appearing in the  suspended cochain complexes.

\begin{exm}\label{exm:theta}
{\rm Each $A$-module $M$ gives rise to two \emph{trivial module factorizations}:
 $$\theta^0(M)=(M, M; {\rm Id}_M, \omega_M) \mbox{ and } \theta^1(M)=({^{\sigma^{-1}}(M)}, M; \omega_{^{\sigma^{-1}}(M)}, {\rm Id}_M).$$
Here, we have $\omega_{^{\sigma^{-1}}(M)}\colon {^{\sigma^{-1}}(M)}\rightarrow {^\sigma(^{\sigma^{-1}}(M))}=M$ and ${\rm Id}_M\colon M\rightarrow M={^\sigma(^{\sigma^{-1}}(M))}$.
These two module factorizations are visualized as follows:
$$M\stackrel{{\rm Id}_M}\rightarrow M\stackrel{\omega_M}\rightsquigarrow M \mbox{ and } {^{\sigma^{-1}}(M)}\stackrel{\omega_{^{\sigma^{-1}}(M)}}\rightarrow M\stackrel{{\rm Id}_M}\rightsquigarrow {^{\sigma^{-1}}(M)}.$$
This gives rise to two obvious functors
$$\theta^i\colon A\mbox{-Mod}\longrightarrow \mathbf{F}(A; \omega), \; i=0,1.$$
 Moreover, we have the following natural isomorphism of functors.
$$S\theta^1\simeq \theta^0$$
}
\end{exm}

For $i=0$ or $1$, we have the projection functor
$${\rm pr}^i\colon \mathbf{F}(A; \omega)\rightarrow A\mbox{-Mod}$$
sending $X=(X^0, X^1; d_X^0, d_X^1)$ to $X^i$.

\begin{lem}\label{lem:2adj}
The following statements hold.
\begin{enumerate}
    \item We have the adjoint pairs $(\theta^0, {\rm pr}^0)$ and $({\rm pr}^0, S\theta^0).$
    \item We have the adjoint pairs $(\theta^1, {\rm pr}^1)$ and $({\rm pr}^1, \theta^0)$.
\end{enumerate}
\end{lem}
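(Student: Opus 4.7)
My plan is to verify each of the four adjunctions by exhibiting an explicit natural bijection of Hom sets obtained by restricting a morphism to one of its two components. The key observation is that, because each of $\theta^0$, $\theta^1$, and $S\theta^0$ has one of its two differentials equal to an identity map and the other equal to a canonical morphism $\omega_{(-)}$, a morphism into or out of such an object has one component completely determined by the other, and the remaining commutativity relation turns out to be a formal consequence of the module factorization axioms together with the naturality of $\omega_{(-)}$.

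For (1), consider a morphism $(f^0,f^1)\colon \theta^0(M)\to X$. Since $d_{\theta^0(M)}^0={\rm Id}_M$, the first commutation relation forces $f^1=d_X^0\circ f^0$; the second relation ${}^\sigma(f^0)\circ \omega_M=d_X^1\circ f^1=d_X^1\circ d_X^0\circ f^0=\omega_{X^0}\circ f^0$ is then automatic by naturality of $\omega$. Hence $(f^0,f^1)\mapsto f^0$ gives the required natural bijection $\Hom_{\mathbf{F}(A;\omega)}(\theta^0(M),X)\cong \Hom_A(M,X^0)$. For the second pair, note that $S\theta^0(M)=(M,\,{}^\sigma(M);\,-\omega_M,\,-{\rm Id}_{{}^\sigma(M)})$, so a morphism $(g^0,g^1)\colon X\to S\theta^0(M)$ has $g^1=-{}^\sigma(g^0)\circ d_X^1$ forced by the second commutativity, and the first relation $g^1\circ d_X^0=-\omega_M\circ g^0$ then follows from $d_X^1\circ d_X^0=\omega_{X^0}$ and naturality of $\omega$.

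For (2), a morphism $(f^0,f^1)\colon \theta^1(M)\to X$ has $f^0$ determined by $f^1$ through ${}^\sigma(f^0)=d_X^1\circ f^1$, while a morphism $(g^0,g^1)\colon X\to \theta^0(M)$ has $g^0=g^1\circ d_X^0$ determined by $g^1$; in each case the remaining relation is automatic by the same naturality argument, using the defining relation ${}^\sigma(d_X^0)\circ d_X^1=\omega_{X^1}$ of $X$. The only delicate point, and the main (albeit modest) obstacle, is the identity ${}^\sigma(\omega_{{}^{\sigma^{-1}}(M)})=\omega_M$ needed to close the consistency check for $(\theta^1,{\rm pr}^1)$; I would deduce it from $\sigma(\omega)=\omega$, which follows by substituting $a=\omega$ in $\omega a=\sigma(a)\omega$ and invoking the right-regularity of $\omega$. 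Aside from this and the signs introduced by $S$ in the pair $({\rm pr}^0,S\theta^0)$, the verifications are routine bookkeeping, and naturality in both variables is automatic since all constructions are componentwise.
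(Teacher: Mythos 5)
Your proposal is correct and takes essentially the same route as the paper: the paper proves (1) via exactly the explicit bijections $g\mapsto (g,\, d_X^0\circ g)$ and $g\mapsto (g,\, -{}^\sigma(g)\circ d_X^1)$ (with the remaining relation automatic from the factorization axioms and naturality of $\omega_{(-)}$), and leaves (2) to the reader, which you handle by the same component-restriction argument. Your aside that $\sigma(\omega)=\omega$, hence ${}^\sigma(\omega_{{}^{\sigma^{-1}}(M)})=\omega_M$, is a correct detail the paper leaves implicit in the definition of $\theta^1$.
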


\begin{proof}
We only prove (1). For any $A$-module $M$ and any module factorization $X=(X^0, X^1; d_X^0, d_X^1)$, we have a natural isomorphism
$${\rm Hom}_A(M, X^0)\longrightarrow {\rm Hom}_{\mathbf{F}(A; \omega)}(\theta^0(M), X), \quad g\mapsto (g, d_X^0\circ  g).$$
This yields the first adjoint pair. For the second one, we have the following natural isomorphism
$${\rm Hom}_A(X^0, M)\longrightarrow {\rm Hom}_{\mathbf{F}(A; \omega)}(X, S\theta^0(M))$$
sending $g$ to $(g, -{^\sigma(g)}\circ d_X^1)$. Here, the minus sign appears due to the one in the definition of the shift endofunctor $S$ on $\mathbf{F}(A; \omega)$.
\end{proof}

Denote by $M_2(A)$ the full matrix ring formed by $2\times 2$ matrices with entries in $A$. Consider the following subring of $M_2(A)$.
$$\Gamma=\begin{pmatrix}
    A & A \\
    A\omega & A
\end{pmatrix}$$
We mention that such a matrix ring appears in the representation theory  \cite{KZ} of orders.

\begin{exm}\label{exm:Gamma}
Set $e_1=\begin{pmatrix}
    1 & 0 \\
    0 & 0
\end{pmatrix}$ and $e_0=\begin{pmatrix}
    0 & 0 \\
    0 & 1
\end{pmatrix}$. We identify $\Gamma e_0$ with $\begin{pmatrix}
A\\
A
\end{pmatrix}$, which consists of column vectors. It is naturally a $\Gamma$-$A$-bimodule. For any $A$-module $M$, we identify the induced $\Gamma$-module $\Gamma e_0\otimes_A M$ with $\begin{pmatrix}
M\\
M
\end{pmatrix}$. Similarly, we identify $\Gamma e_1$ with$\begin{pmatrix}
A\\
A\omega
\end{pmatrix}$. Recall the following well-known isomorphism of left $A$-modules.
$$A\omega \otimes_A M\longrightarrow {^{\sigma^{-1}}(M)}, \quad aw\otimes_A y\mapsto a(^{\sigma^{-1}}(y))={^{\sigma^{-1}}(\sigma^{-1}(a)y)}$$
Using this isomorphism, we identify the induced $\Gamma$-module $\Gamma e_1\otimes_A M$ with $\begin{pmatrix}
M\\
{^{\sigma^{-1}}(M)}
\end{pmatrix}$. The explicit $\Gamma$-action on $\begin{pmatrix}
M\\
{^{\sigma^{-1}}(M)}
\end{pmatrix}$ is given as follows.
$$\begin{pmatrix}a_{11} & a_{12}\\
a_{21}\omega & a_{22}\end{pmatrix} \begin{pmatrix}
    x\\
    {^{\sigma^{-1}}(y)}
\end{pmatrix}=\begin{pmatrix}
    a_{11}x+a_{12}\omega y\\
    a_{21}(^{\sigma^{-1}}(x))+a_{22}(^{\sigma^{-1}}(y))
\end{pmatrix}
$$
We mention that projective $\Gamma$-modules are precisely direct summands of $(\Gamma e_0\otimes_A P)\oplus (\Gamma e_1\otimes_A Q)$ for some projective $A$-modules $P$ and $Q$.
\end{exm}

To each module factorization $X=(X^0, X^1; d_X^0, d_X^1)$, we associate a left $\Gamma$-module $\Phi(X)$ as follows. As an abelian group, $\Phi(X)=\begin{pmatrix}X^1\\ X^0\end{pmatrix}$ whose typical element is a column vector $\begin{pmatrix}x^1\\ x^0\end{pmatrix}$ with each $x^i\in X^i$. The left $\Gamma$-action is given by
$$\begin{pmatrix}a_{11} & a_{12}\\
                 a_{21}\omega & a_{22}\end{pmatrix} \begin{pmatrix}x^1\\ x^0\end{pmatrix}= \begin{pmatrix}a_{11}x^1+a_{12}d_X^0(x^0)\\ a_{21}(\xi_{X^0}^{-1}\circ d_X^1)(x^1)+a_{22}x^0\end{pmatrix}. $$
Here, $\xi_{X^0}\colon X_0\rightarrow {^\sigma(X_0)}$ is an isomorphism of abelian groups, sending $x$ to $^\sigma(x)$. To verify that this $\Gamma$-action is associative, we use the following identities:
\begin{align*}
    &(d_X^0\circ \xi_{X^0}^{-1}\circ d_X^1)(x^1)=wx^1, \quad (\xi_{X^0}^{-1}\circ d_X^1)(a_{11}x^1)=\sigma(a_{11})  (\xi_{X^0}^{-1}\circ d_X^1)(x^1)\\
    & \mbox{ and }  (\xi_{X^0}^{-1}\circ d_X^1)(a_{12}d_X^0(x^0))=\sigma(a_{12})wx^0.
\end{align*}
Each morphism $(f^0, f^1)\colon X\rightarrow Y$ of module factorizations gives rise to a morphism $\Phi(f^0, f^1)=\begin{pmatrix}
    f^1\\ f^0
\end{pmatrix}\colon \Phi(X)\rightarrow \Phi(Y)$ of left $\Gamma$-modules. Consequently, we have a well-defined functor $\Phi\colon \mathbf{F}(A; \omega)\rightarrow \Gamma\mbox{-Mod}$.

The following result might be deduced from the characterization in \cite[(1.5)~Theorem]{Green} on modules over Morita context rings.

\begin{prop}\label{prop:F-Gamma}
   The functor $\Phi \colon \mathbf{F}(A; \omega)\rightarrow \Gamma\mbox{-}{\rm Mod}$ above is an equivalence of categories.
\end{prop}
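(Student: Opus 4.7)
The plan is to construct an explicit quasi-inverse functor $\Psi\colon \Gamma\mbox{-}{\rm Mod}\rightarrow \mathbf{F}(A; \omega)$ using the orthogonal idempotents $e_0, e_1\in \Gamma$ of Example~\ref{exm:Gamma}, which satisfy $e_0+e_1=1$. For a left $\Gamma$-module $N$, I would decompose $N=e_1N\oplus e_0N$ as abelian groups, set $X^0=e_0N$ and $X^1=e_1N$, and endow each with a left $A$-action through the canonical ring isomorphism $A\cong e_i\Gamma e_i$.

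The two off-diagonal summands of $\Gamma$ then supply the differentials. Left multiplication by the distinguished element $\begin{pmatrix} 0 & 1\\ 0 & 0\end{pmatrix}\in e_1\Gamma e_0$ is readily seen to be $A$-linear, and defines $d_X^0\colon X^0\rightarrow X^1$. Left multiplication by $\begin{pmatrix} 0 & 0\\ \omega & 0\end{pmatrix}\in e_0\Gamma e_1$ gives only a $\sigma$-semilinear additive map $\phi\colon X^1\rightarrow X^0$ (because $\omega a=\sigma(a)\omega$ inside $\Gamma$), which upon passing to the twisted target ${^\sigma(X^0)}$ becomes an honest $A$-linear morphism $d_X^1\colon X^1\rightarrow {^\sigma(X^0)}$. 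Computing the products $\begin{pmatrix} 0 & 0\\ \omega & 0\end{pmatrix}\begin{pmatrix} 0 & 1\\ 0 & 0\end{pmatrix}=\omega e_0$ and $\begin{pmatrix} 0 & 1\\ 0 & 0\end{pmatrix}\begin{pmatrix} 0 & 0\\ \omega & 0\end{pmatrix}=\omega e_1$ inside $\Gamma$, associativity of the module action then forces exactly the two factorization identities $d_X^1\circ d_X^0=\omega_{X^0}$ and ${^\sigma(d_X^0)}\circ d_X^1=\omega_{X^1}$.

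With $\Psi$ defined on objects, functoriality is routine: any $\Gamma$-linear map $N\rightarrow N'$ preserves the idempotent decomposition and commutes with left multiplication by the two distinguished off-diagonal elements, which yields a pair $(f^0, f^1)$ satisfying the two defining compatibilities of a morphism in $\mathbf{F}(A; \omega)$. The natural isomorphisms $\Phi\Psi\simeq \mathrm{Id}$ and $\Psi\Phi\simeq \mathrm{Id}$ are then immediate componentwise, since the construction of $\Psi$ reverses that of $\Phi$ on the nose.

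The one genuinely subtle point, and the main bookkeeping hazard, is the $\sigma$-twist: one must carefully distinguish between the $\sigma$-semilinear map $\phi\colon X^1\rightarrow X^0$ arising directly from the action of $e_0\Gamma e_1$ and the associated $A$-linear map $d_X^1\colon X^1\rightarrow {^\sigma(X^0)}$. Once the bimodule identification $A\omega\cong {^{\sigma^{-1}}(A)}$ is fixed consistently as in Example~\ref{exm:Gamma}, everything else is formal; alternatively, the statement can be extracted directly from the characterization \cite[(1.5)~Theorem]{Green} of modules over Morita context rings, to which the author refers.
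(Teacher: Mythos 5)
Your construction is exactly the paper's own proof: the author defines the quasi-inverse $\Psi(M)=(e_0M, e_1M; d^0, d^1)$ with $d^0$ given by left multiplication by $\begin{pmatrix} 0 & 1\\ 0 & 0\end{pmatrix}$ and $d^1(y)={^\sigma\bigl(\begin{pmatrix} 0 & 0\\ \omega & 0\end{pmatrix}y\bigr)}$, precisely as you propose, and your handling of the $\sigma$-twist and the off-diagonal products matches the verification the paper leaves as routine. So the proposal is correct and takes essentially the same approach.
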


\begin{proof}
Each $\Gamma$-module $M$ yields a module factorization $\Psi(M)=(e_0 M, e_1 M; d^0, d^1)$, where $d^0\colon e_0M\rightarrow e_1M$ sends $x$ to $\begin{pmatrix}
    0 & 1 \\
    0 & 0
\end{pmatrix}x$, and $d^1\colon e_1M \rightarrow {^\sigma(e_0M)}$ sends $y$ to $^\sigma(\begin{pmatrix}
    0 & 0 \\
    \omega & 0
\end{pmatrix}y)$. It is routine to verify that $\Psi$ is naturally a functor, which is a quasi-inverse of $\Phi$.
\end{proof}

\begin{rem}\label{rem:proj}
By Example~\ref{exm:Gamma}, we identify $\Phi(\theta^0(P))$ with  $\Gamma e_0\otimes_A P$ for any  projective $A$-module $P$. Similarly, we identify $\Phi(\theta^1(Q))$ with $\Gamma e_1\otimes_A Q$ for any projective $A$-module $Q$. It follows from the equivalence $\Phi$ above that projective objects in $\mathbf{F}(A; \omega)$ are precisely direct summands of $\theta^0(P)\oplus \theta^1(Q)$ for projective $A$-modules $P$ and $Q$.
\end{rem}

\begin{rem}\label{rem:Gamma}
There is  a ring automorphism $\overline{\sigma}$ on $\Gamma$ given by
$$\overline{\sigma}\begin{pmatrix} a_{11} & a_{12} \\
                    a_{21}\omega & a_{22} \end{pmatrix}= \begin{pmatrix}a_{22} & -a_{21}\\
                                                                      -\sigma(a_{12}) \omega & \sigma(a_{11}) \end{pmatrix}.$$
We observe the following square,
\[\xymatrix{
{\bf F}(A; \omega) \ar[d]_-{S} \ar[rr]^-{\Phi} && \Gamma\mbox{-Mod}\ar[d]^-{^{\overline{\sigma}}(-)}\\
{\bf F}(A; \omega) \ar[rr]^-{\Phi} && \Gamma\mbox{-Mod}
}\]
which is commutative up to a natural isomorphism. Here, ${^{\overline{\sigma}}(-)}$ denotes the twist endofunctor associated to $\overline{\sigma}$.
\end{rem}

\subsection{Frobenius functors}

We refer to \cite{Mor65, CGN99, CDM02} for Frobenius functors. Let $\mathcal{C}$ and $\mathcal{D}$ be two additive categories. Fix two autoequivalences $\alpha\colon \mathcal{C}\rightarrow \mathcal{C}$ and $\beta\colon \mathcal{D}\rightarrow \mathcal{D}$. Let $F\colon \mathcal{C}\rightarrow \mathcal{D}$ and $G\colon \mathcal{D}\rightarrow \mathcal{C}$ be two additive functors. The pair $(F, G)$  is called a \emph{Frobenius pair} of type $(\alpha, \beta)$ if both $(F, G)$ and $(G, \beta F\alpha)$ are adjoint pairs. An additive functor $F$ is called a \emph{Frobenius functor} if it fits into a Frobenius pair $(F, G)$; in this situation, the functor $G$ is also Frobenius.

Let $\mathcal{C}'$ be another additive category. Denote by $\mathcal{C}\times\mathcal{C}'$ the product category: its objects are pairs $(X, X')$ with $X\in \mathcal{C}$ and $X'\in \mathcal{C}'$, and the Hom groups are given by
$${\rm Hom}_{\mathcal{C}\times \mathcal{C}'}((X, X'), (Y, Y'))= {\rm Hom}_\mathcal{C}(X, Y)\oplus {\rm Hom}_{\mathcal{C}'}(X', Y').$$
Let $\alpha'\colon \mathcal{C}'\rightarrow \mathcal{C}'$ be an autoequivalence. Then $\alpha\times \alpha'\colon \mathcal{C}\times \mathcal{C}'\rightarrow \mathcal{C}\times \mathcal{C}'$ is naturally an autoequivalence.

We assume that $F'\colon \mathcal{C}'\rightarrow \mathcal{D}$ and $G'\colon \mathcal{D}\rightarrow \mathcal{C}'$ form a Frobenius pair $(F', G')$ of type $(\alpha', \beta)$. The functor $(G, G')\colon \mathcal{D}\rightarrow \mathcal{C}\times \mathcal{C}'$ sends any object $C$ to $(G(C), G'(C))$. We define a functor $F+F'\colon \mathcal{C}\times \mathcal{C}'\rightarrow \mathcal{D}$ sending an object $(X, X')$ to $F(X)\oplus F'(X')$, and a  morphism $(f, f')$ to $F(f)\oplus F'(f')$.

The following general fact is standard.

\begin{lem}\label{lem:general}
Keep the assumptions as above. Then $(F+F', (G, G'))$ is a Frobenius pair of type $(\alpha\times \alpha', \beta)$. \hfill $\square$
\end{lem}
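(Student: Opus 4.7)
The plan is to verify the two required adjunctions directly from the definitions, using that additive functors commute with finite direct sums and that $\mathrm{Hom}_{\mathcal{C} \times \mathcal{C}'}$ splits as a direct sum of Hom groups in each factor.

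First I would check that $(F+F', (G, G'))$ is an adjoint pair. Given objects $(X, X') \in \mathcal{C} \times \mathcal{C}'$ and $D \in \mathcal{D}$, I would compose the natural isomorphisms
\[
\mathrm{Hom}_{\mathcal{D}}(F(X) \oplus F'(X'), D) \cong \mathrm{Hom}_{\mathcal{D}}(F(X), D) \oplus \mathrm{Hom}_{\mathcal{D}}(F'(X'), D),
\]
then apply the adjunctions $(F, G)$ and $(F', G')$ componentwise to rewrite this as $\mathrm{Hom}_{\mathcal{C}}(X, G(D)) \oplus \mathrm{Hom}_{\mathcal{C}'}(X', G'(D))$, which is by definition $\mathrm{Hom}_{\mathcal{C} \times \mathcal{C}'}((X, X'), (G(D), G'(D)))$. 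Naturality in both variables is inherited from the naturality of each input isomorphism.

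Next I would verify that $((G, G'), \beta(F+F')(\alpha \times \alpha'))$ is an adjoint pair by a symmetric argument. Starting from $\mathrm{Hom}_{\mathcal{C} \times \mathcal{C}'}((G(D), G'(D)), (X, X'))$, I split it into the direct sum $\mathrm{Hom}_{\mathcal{C}}(G(D), X) \oplus \mathrm{Hom}_{\mathcal{C}'}(G'(D), X')$ and then apply the second halves of the two given Frobenius pairs, namely $(G, \beta F \alpha)$ and $(G', \beta F' \alpha')$. The result is
\[
\mathrm{Hom}_{\mathcal{D}}(D, \beta F\alpha(X)) \oplus \mathrm{Hom}_{\mathcal{D}}(D, \beta F'\alpha'(X')) \cong \mathrm{Hom}_{\mathcal{D}}(D, \beta F\alpha(X) \oplus \beta F'\alpha'(X')).
\]
Here I use that $\beta$ is additive (being an autoequivalence of an additive category), so the right-hand side equals $\mathrm{Hom}_{\mathcal{D}}(D, \beta(F+F')(\alpha \times \alpha')(X, X'))$ by unpacking the definitions of $F+F'$ and $\alpha \times \alpha'$.

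There is no serious obstacle; the lemma is essentially a bookkeeping statement that the formation of Frobenius pairs is compatible with forming coproducts of source categories. The only point that merits any attention is making sure the type is preserved, i.e.\ that the twist on the target side remains $\beta$ (rather than acquiring extra components); this is automatic because both input pairs share the same target autoequivalence $\beta$, so it factors out of the direct sum in the last displayed isomorphism.
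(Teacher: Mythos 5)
Your proof is correct and is exactly the standard verification the paper has in mind (the lemma is stated with proof omitted as a routine fact): split the Hom groups of the product category, apply the two given adjunctions componentwise, and use additivity of $\beta$ to reassemble the second adjunction, with naturality inherited throughout.
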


We will  consider the following functor
$${\rm pr}=({\rm pr}^0, {\rm pr}^1)\colon \mathbf{F}(A; \omega)\longrightarrow A\mbox{-Mod}\times A\mbox{-Mod}.$$
It might be viewed as a forgetful functor.

\begin{prop}\label{prop:Frobenius}
We have a Frobenius pair $(\theta^0+\theta^1, {\rm pr})$ of type $({\rm  Id}, S)$, which consists of two faithful functors.
\end{prop}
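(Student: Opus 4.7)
The plan is to decompose the statement into two componentwise Frobenius pairs of type $(\mathrm{Id}, S)$ coming from Lemma~\ref{lem:2adj}, then assemble them with the general gluing Lemma~\ref{lem:general}, and finally dispose of the faithfulness as a direct consequence of the definition of $\theta^0$, $\theta^1$ and $\mathrm{pr}$.

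First I would check that $(\theta^0, \mathrm{pr}^0)$ is a Frobenius pair of type $(\mathrm{Id}, S)$ on the category $A\mbox{-}\mathrm{Mod}$. This is immediate from Lemma~\ref{lem:2adj}(1): the pair $(\theta^0, \mathrm{pr}^0)$ is adjoint, and the other required adjoint pair is $(\mathrm{pr}^0, S\theta^0) = (\mathrm{pr}^0, S\circ \theta^0 \circ \mathrm{Id}_{A\text{-}\mathrm{Mod}})$, which is exactly the second adjunction in Lemma~\ref{lem:2adj}(1). Second, I would verify that $(\theta^1, \mathrm{pr}^1)$ is also a Frobenius pair of type $(\mathrm{Id}, S)$. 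Here the first adjunction $(\theta^1, \mathrm{pr}^1)$ is Lemma~\ref{lem:2adj}(2), and the required second adjunction $(\mathrm{pr}^1, S\theta^1)$ follows from the other adjunction $(\mathrm{pr}^1, \theta^0)$ in Lemma~\ref{lem:2adj}(2) together with the natural isomorphism $S\theta^1 \simeq \theta^0$ recorded at the end of Example~\ref{exm:theta}.

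Next I would apply Lemma~\ref{lem:general} with $\mathcal{C}=\mathcal{C}'=A\mbox{-}\mathrm{Mod}$, $\mathcal{D}=\mathbf{F}(A;\omega)$, $\alpha=\alpha'=\mathrm{Id}$, $\beta=S$, and $(F,G)=(\theta^0,\mathrm{pr}^0)$, $(F',G')=(\theta^1,\mathrm{pr}^1)$. The lemma delivers that $(\theta^0+\theta^1, (\mathrm{pr}^0,\mathrm{pr}^1)) = (\theta^0+\theta^1, \mathrm{pr})$ is a Frobenius pair of type $(\mathrm{Id}\times\mathrm{Id}, S) = (\mathrm{Id}, S)$, which is the Frobenius assertion of the proposition.

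For the faithfulness clause, I would argue directly. The functor $\mathrm{pr}$ acts on a morphism $(f^0,f^1)$ simply by returning the pair $(f^0,f^1)$, so its kernel on morphisms is zero, whence $\mathrm{pr}$ is faithful. For $\theta^0+\theta^1$, if $(f,g)\colon (M,M')\to (N,N')$ satisfies $(\theta^0+\theta^1)(f,g)=0$, then both $\theta^0(f)$ and $\theta^1(g)$ vanish; but inspecting Example~\ref{exm:theta} shows that $\theta^0(f)$ and $\theta^1(g)$ have $f$ and $g$ (respectively) as their zeroth and first components, so $f=0$ and $g=0$. Hence $\theta^0+\theta^1$ is faithful as well. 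There is no genuine obstacle here—everything reduces to unpacking Lemma~\ref{lem:2adj} and Example~\ref{exm:theta}, the only subtle point being the role of the sign in the definition of $S$, which is already absorbed into the natural isomorphism $S\theta^1\simeq \theta^0$ and into the second adjunction of Lemma~\ref{lem:2adj}(1).
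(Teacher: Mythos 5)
Your proposal is correct and follows the same route as the paper: it verifies that $(\theta^0,{\rm pr}^0)$ and $(\theta^1,{\rm pr}^1)$ are Frobenius pairs of type $({\rm Id}, S)$ via Lemma~\ref{lem:2adj} and the isomorphism $S\theta^1\simeq\theta^0$ from Example~\ref{exm:theta}, and then glues them with Lemma~\ref{lem:general}, exactly as the paper does (you merely spell out the details, including the faithfulness, that the paper leaves to the reader, and you use the isomorphism in its correct form $S\theta^1\simeq\theta^0$ where the paper's proof misstates it as $S\theta^0\simeq\theta^1$).
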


\begin{proof}
Recall from Example~\ref{exm:theta} the natural isomorphism $S\theta^0\simeq \theta^1$. Then the required Frobenius pair follows by combining Lemmas~\ref{lem:general} and \ref{lem:2adj}. It is direct to see that both functors $\theta^0+\theta^1$ and ${\rm pr}$ are faithful.
\end{proof}

\begin{rem}\label{rem:Frobenius}
Consider the direct product $A\times A$ of rings, which is viewed as the diagonal subring of $\Gamma$. We identify $(A\times A)\mbox{-Mod}$ with $A\mbox{-Mod}\times A\mbox{-Mod}$. Proposition~\ref{prop:F-Gamma} allows us to identity $\Gamma\mbox{-Mod}$ with $\mathbf{F}(A; \omega)$. Then the functor ${\rm pr}\colon \mathbf{F}(A; \omega)\rightarrow A\mbox{-Mod}\times A\mbox{-Mod}$ corresponds to the restriction functor $\Gamma\mbox{-Mod}\rightarrow (A\times A)\mbox{-Mod}$ along the inclusion $A\times A\hookrightarrow \Gamma$. Therefore, the Frobenius pair in Proposition~\ref{prop:Frobenius} and Remark~\ref{rem:Gamma} imply that  $A\times A\hookrightarrow \Gamma$ is a (generalized) Frobenius extension in the sense of \cite[Example~2.9]{CDM02}.
\end{rem}

\begin{prop}\label{prop:d-Gor}
Let $d\geq 0$. Then $A$ is left $d$-Gorenstein if and only if so is $\Gamma$.
\end{prop}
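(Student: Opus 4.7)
The plan is to deduce the proposition from the sharper identity
$$\mathrm{Gpd}_\Gamma(X) \;=\; \max\bigl(\mathrm{Gpd}_A(X^0),\, \mathrm{Gpd}_A(X^1)\bigr)$$
for every module factorization $X=(X^0, X^1; d_X^0, d_X^1) \in \mathbf{F}(A;\omega)$, viewed as a $\Gamma$-module via Proposition~\ref{prop:F-Gamma}. The proposition follows at once: if $\Gamma$ is left $d$-Gorenstein, then for any $A$-module $M$ the factorization $\theta^0(M)$ has both components equal to $M$, giving $\mathrm{Gpd}_A(M)\leq d$; conversely, the identity immediately bounds $\mathrm{Gpd}_\Gamma(X)$ by $d$.

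The key preliminary is that each of the four functors $\theta^0, \theta^1, \mathrm{pr}^0, \mathrm{pr}^1$ is exact, preserves projective modules, and preserves Gorenstein projectivity. Exactness and preservation of projectives follow from the adjunctions of Lemma~\ref{lem:2adj} together with Remark~\ref{rem:proj} and the explicit identifications such as $\mathrm{pr}^0(\theta^1(Q))\simeq {^{\sigma^{-1}}(Q)}$. Preservation of Gorenstein projectivity is a standard adjunction argument: if $P^\bullet$ is a complete resolution of a Gorenstein projective $A$-module $M$, then $\theta^0(P^\bullet)$ is acyclic with projective $\Gamma$-components, and
$$\mathrm{Hom}_\Gamma(\theta^0(P^\bullet), Q)\;\cong\; \mathrm{Hom}_A(P^\bullet, \mathrm{pr}^0(Q))$$
is acyclic because $\mathrm{pr}^0(Q)$ is projective over $A$; the remaining three functors are handled symmetrically.

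The easy direction $\mathrm{Gpd}_A(X^i)\leq \mathrm{Gpd}_\Gamma(X)$ is then immediate by applying $\mathrm{pr}^i$ to a Gorenstein projective resolution of $X$. For the reverse inequality, I would induct on $d=\max(\mathrm{Gpd}_A(X^0), \mathrm{Gpd}_A(X^1))$. The inductive step is routine: pick a short exact sequence $0\to K\to P_\Gamma\to X\to 0$ in $\Gamma\mbox{-Mod}$ with $P_\Gamma$ projective; each $P_\Gamma^i$ is projective over $A$, so by Lemma~\ref{lem:Gpd} applied componentwise we get $\mathrm{Gpd}_A(K^i)\leq d-1$, the inductive hypothesis gives $\mathrm{Gpd}_\Gamma(K)\leq d-1$, and a final application of Lemma~\ref{lem:Gpd} in $\Gamma\mbox{-Mod}$ yields $\mathrm{Gpd}_\Gamma(X)\leq d$.

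The main obstacle is the base case $d=0$: showing that if $X^0, X^1$ are Gorenstein projective over $A$, then $X$ is Gorenstein projective over $\Gamma$. I would verify the criterion of Lemma~\ref{lem:GP}. First, $\mathrm{Ext}_\Gamma^i(X, -)$ vanishes on projective $\Gamma$-modules: by the adjunction $\mathrm{pr}^1\dashv \theta^0$ and the fact that $\mathrm{pr}^1$ preserves projectives, one derives $\mathrm{Ext}_\Gamma^i(X, \theta^0(P))\cong \mathrm{Ext}_A^i(X^1, P)=0$; similarly, writing $\theta^1(Q)\simeq S\theta^0({^{\sigma^{-1}}(Q)})$ and using $\mathrm{pr}^0\dashv S\theta^0$, one obtains $\mathrm{Ext}_\Gamma^i(X, \theta^1(Q))\cong \mathrm{Ext}_A^i(X^0, {^{\sigma^{-1}}(Q)})=0$ for all $i\geq 1$. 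Second, to produce the required coresolution, choose short exact sequences $0\to X^1\to E\to Y^1\to 0$ and $0\to X^0\to {^{\sigma^{-1}}(F)}\to Y^0\to 0$ with $E, F$ projective $A$-modules and $Y^0, Y^1$ Gorenstein projective (available from Lemma~\ref{lem:GP}); the two adjunctions combine to assemble an embedding $X\hookrightarrow \theta^0(E)\oplus \theta^1(F)$ into a projective $\Gamma$-module whose cokernel $C$ fits into component-wise extensions $0\to E\to C^1\to Y^1\to 0$ and $0\to F\to C^0\to Y^0\to 0$, so $C^0, C^1$ are again Gorenstein projective by the extension-closure of $A\mbox{-GProj}$. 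Iterating yields the required coresolution, completing the base case.
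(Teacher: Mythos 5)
Your proposal is correct, but it follows a genuinely different (and more self-contained) route than the paper. The paper disposes of the statement in a few lines: the pair $(\theta^0+\theta^1,{\rm pr})$ is a faithful Frobenius pair (Proposition~\ref{prop:Frobenius}), so by \cite[Corollary~3.4]{Chen-Ren} the abelian categories $\mathbf{F}(A;\omega)\simeq\Gamma\mbox{-Mod}$ and $A\mbox{-Mod}\times A\mbox{-Mod}$ have the same Gorenstein global dimension, and $A\times A$ is left $d$-Gorenstein precisely when $A$ is. You instead prove by hand the finer componentwise formula ${\rm Gpd}_\Gamma(X)=\max({\rm Gpd}_A(X^0),{\rm Gpd}_A(X^1))$, using only the adjunctions of Lemma~\ref{lem:2adj}, Remark~\ref{rem:proj}, Lemma~\ref{lem:GP} and Lemma~\ref{lem:Gpd}; in particular your base case $d=0$ is in effect a direct proof of the inclusion $\mathbf{GF}(A;\omega)\subseteq\mathcal{GP}(\mathbf{F}(A;\omega))$, which the paper obtains only afterwards (Proposition~\ref{prop:GP}) and again by citing \cite[Theorem~3.2]{Chen-Ren}. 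So the underlying mechanism, namely the Frobenius-type adjunctions among $\theta^0,\theta^1,{\rm pr}^0,{\rm pr}^1$, is the same in both arguments, but yours avoids the external input and yields a sharper quantitative statement (an explicit description of Gorenstein projective $\Gamma$-modules and their dimensions), at the cost of length, whereas the paper's is shorter by outsourcing to the general theory of Frobenius functors. Two harmless slips: with your embedding $X\hookrightarrow\theta^0(E)\oplus\theta^1(F)$ the componentwise extensions of the cokernel come out as $0\to F\to C^1\to Y^1\to 0$ and $0\to E\to C^0\to Y^0\to 0$ (you interchanged $E$ and $F$), which changes nothing since either way each component is an extension of a Gorenstein projective module by a projective one; and in the iteration you should note explicitly that each successive cokernel again has Gorenstein projective components, so that the Ext-vanishing step applies at every stage, as the criterion of Lemma~\ref{lem:GP} requires.
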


\begin{proof}
By the Frobenius pair in Proposition~\ref{prop:Frobenius}, we infer from \cite[Corollary~3.4]{Chen-Ren} that the abelian categories $\mathbf{F}(A; \omega)$ and $A\mbox{-Mod}\times A\mbox{-Mod}$ have the same Gorenstein global dimension.  We recall that a ring $R$ is left $d$-Gorenstein if and only if the Gorenstein global dimension of its module category $R\mbox{-Mod}$ is at most $d$.  In view of Remark~\ref{rem:Frobenius}, we infer that $\Gamma$ is left $d$-Gorenstein  if and only if so is $A\times A$. Finally, we observe that the direct product $A\times A$  is left $d$-Gorenstein if and only if so is $A$.  Then the required result follows immediately.
\end{proof}

For any abelian category $\mathcal{A}$ with enough projective objects, we denote by $\mathcal{GP}(\mathcal{A})$ the full subcategory formed by Gorenstein projective objects. For example, we have $\mathcal{GP}(A\mbox{-Mod})=A\mbox{-GProj}$ for any ring $A$. By \cite[Proposition~3.8]{Bel}, $\mathcal{GP}(\mathcal{A})$ is always a Frobenius exact category, whose projective-injective objects are precisely all projective objects in $\mathcal{A}$.

Consider the full subcategory $\mathbf{GF}(A; \omega)$ of $\mathbf{F}(A; \omega)$ formed by those module factorizations $X$ with both $X^i\in A\mbox{-GProj}$. Since the full subcategory $\mathbf{GF}(A; \omega)$ is closed under extensions in $\mathbf{F}(A; \omega)$, it is naturally an exact category.

\begin{prop}\label{prop:GP}
We have $\mathcal{GP}(\mathbf{F}(A; \omega))=\mathbf{GF}(A; \omega)$, which is equivalent to $\Gamma\mbox{-}{\rm GProj}$ as an exact category.  In particular, the exact category $\mathbf{GF}(A; \omega)$ is Frobenius, whose projective-injective objects are precisely direct summands of $\theta^0(P)\oplus \theta^1(Q)$ for projective $A$-modules $P$ and $Q$.
\end{prop}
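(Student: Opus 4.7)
My plan is to reduce the proposition, via the equivalence $\Phi$ of Proposition~\ref{prop:F-Gamma}, to the single identification $\mathcal{GP}(\mathbf{F}(A;\omega)) = \mathbf{GF}(A;\omega)$. Indeed, $\Phi$ transports Gorenstein projective objects between $\mathbf{F}(A;\omega)$ and $\Gamma\mbox{-Mod}$, Remark~\ref{rem:proj} identifies the projective factorizations as direct summands of $\theta^0(P) \oplus \theta^1(Q)$, and \cite[Proposition~3.8]{Bel} automatically upgrades $\mathcal{GP}$ of any abelian category with enough projectives to a Frobenius exact category whose projective-injective objects coincide with the ambient projectives; once the identification holds, the whole proposition follows.

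For the inclusion $\mathcal{GP}(\mathbf{F}(A;\omega)) \subseteq \mathbf{GF}(A;\omega)$, I would take $X \in \mathcal{GP}(\mathbf{F}(A;\omega))$ with complete resolution $P^\bullet$ and apply the exact projection ${\rm pr}^i$. By Lemma~\ref{lem:2adj}, ${\rm pr}^i$ is simultaneously a left and a right adjoint, so it preserves projective $A$-modules, making ${\rm pr}^i(P^\bullet)$ an acyclic complex of projectives with cocycle $X^i$. For total acyclicity against an arbitrary projective $A$-module $Q$, I invoke the adjunctions $({\rm pr}^0, S\theta^0)$ and $({\rm pr}^1, \theta^0)$ to identify ${\rm Hom}_A({\rm pr}^i P^\bullet, Q)$ with ${\rm Hom}_{\mathbf{F}(A;\omega)}(P^\bullet, S\theta^0(Q))$ or ${\rm Hom}_{\mathbf{F}(A;\omega)}(P^\bullet, \theta^0(Q))$, which is acyclic since the target is a projective factorization.

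For the harder reverse inclusion $\mathbf{GF}(A;\omega) \subseteq \mathcal{GP}(\mathbf{F}(A;\omega))$, the crucial preliminary is that $\theta^0(G), \theta^1(G) \in \mathcal{GP}(\mathbf{F}(A;\omega))$ for every $G \in A\mbox{-GProj}$: apply the exact, projective-preserving $\theta^i$ to a complete resolution of $G$ and check Hom-acyclicity against any projective factorization $\theta^0(P) \oplus \theta^1(Q')$ via the adjunctions $(\theta^0, {\rm pr}^0)$ and $(\theta^1, {\rm pr}^1)$, together with the isomorphism $S\theta^1 \simeq \theta^0$ from Example~\ref{exm:theta}. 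Now given $X \in \mathbf{GF}(A;\omega)$, I would choose surjections $P^i \twoheadrightarrow X^i$ from projective $A$-modules and compose $\theta^0(P^0) \oplus \theta^1(P^1) \to \theta^0(X^0) \oplus \theta^1(X^1)$ with the counit of $(\theta^0 + \theta^1, {\rm pr})$ to obtain an epimorphism from a projective factorization onto $X$; its kernel lies again in $\mathbf{GF}(A;\omega)$ by closure of $A\mbox{-GProj}$ under kernels of epimorphisms between its objects. Iterating produces a left projective resolution with all syzygies in $\mathbf{GF}(A;\omega)$. The right half is obtained dually: embed each $X^i$ into a projective $A$-module with Gorenstein projective cokernel, and compose the unit $X \to S\theta^0(X^0) \oplus \theta^0(X^1)$ of the second adjunction $({\rm pr}, S(\theta^0+\theta^1))$ of Proposition~\ref{prop:Frobenius} with the resulting embedding of its target into a projective factorization. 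Splicing yields the desired complete resolution of $X$.

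The principal obstacle is the reverse inclusion, specifically verifying that the spliced complex is genuinely totally acyclic against every projective factorization. This last check reduces, via the four adjunctions of Lemma~\ref{lem:2adj} and Remark~\ref{rem:proj}, to the Hom-acyclicity of the chosen complete resolutions of the components $X^0$ and $X^1$; however, the bookkeeping — ensuring that the iterated syzygies and cosyzygies remain in $\mathbf{GF}(A;\omega)$ throughout, and that the coresolution half fits exactly with the left half — will be the delicate part of the argument.
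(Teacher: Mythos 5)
Your proposal is essentially correct, but it takes a genuinely different route from the paper. The paper settles the key equality $\mathcal{GP}(\mathbf{F}(A;\omega))=\mathbf{GF}(A;\omega)$ in one stroke by applying the general result \cite[Theorem~3.2]{Chen-Ren} to the faithful Frobenius functor ${\rm pr}$ supplied by Proposition~\ref{prop:Frobenius}: an object $X$ is Gorenstein projective in $\mathbf{F}(A;\omega)$ if and only if ${\rm pr}(X)=(X^0,X^1)$ is Gorenstein projective in $A\mbox{-Mod}\times A\mbox{-Mod}$; the identification with $\Gamma\mbox{-GProj}$ via $\Phi$ and the Frobenius structure via \cite[Proposition~3.8]{Bel} are then handled exactly as you do. What you propose is, in effect, a self-contained proof of the needed special case of that cited theorem: the inclusion $\mathcal{GP}(\mathbf{F}(A;\omega))\subseteq\mathbf{GF}(A;\omega)$ by projecting a complete resolution and using the adjunctions $({\rm pr}^0,S\theta^0)$ and $({\rm pr}^1,\theta^0)$ (note $S\theta^0(Q)\simeq\theta^1({^\sigma(Q)})$ is projective), and the reverse inclusion by building a complete resolution of $X$ by hand from the counit of $(\theta^0+\theta^1,{\rm pr})$ and the unit of the second adjunction. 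This buys independence from the external reference and makes complete resolutions in $\mathbf{F}(A;\omega)$ explicit, at the cost of the bookkeeping you flag; both delicate points are fillable. First, in the coresolution step the cokernel of $X\hookrightarrow S\theta^0(Q^0)\oplus\theta^0(Q^1)$ does lie in $\mathbf{GF}(A;\omega)$: it is an extension of $S\theta^0(G^0)\oplus\theta^0(G^1)$ by the cokernel of the unit, whose components are $X^1$ and ${^\sigma(X^0)}$, and $\mathbf{GF}(A;\omega)$ is extension-closed because $A\mbox{-GProj}$ is. Second, the total-acyclicity check reduces, via your adjunctions and Remark~\ref{rem:proj}, to total acyclicity of ${\rm pr}^i$ of the spliced complex; this is not literally the chosen complete resolutions of $X^0$ and $X^1$, but it follows because every cocycle of these projected complexes of projective $A$-modules is Gorenstein projective, so ${\rm Ext}^1_A(-,Q)$ vanishes on the cocycles and ${\rm Hom}_A(-,Q)$ remains exact for every projective $A$-module $Q$.
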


\begin{proof}
We apply \cite[Theorem~3.2]{Chen-Ren} to the faithful Frobenius functor ${\rm pr}\colon \mathbf{F}(A; \omega)\rightarrow A\mbox{-Mod}\times A\mbox{-Mod}$, and obtain that a module factorization $X$ is a Gorenstein projective object if and only if so is  ${\rm pr}(X)=(X^0, X^1)$ in $A\mbox{-Mod}\times A\mbox{-Mod}$. Then we have the required equality. The equivalence $\Phi$ in Proposition~\ref{prop:F-Gamma} implies that $\mathcal{GP}(\mathbf{F}(A; \omega))$ is equivalent to  $\mathcal{GP}(\Gamma\mbox{-Mod})=\Gamma\mbox{-}{\rm GProj}$.
\end{proof}

\section{The p-null-homotopoical morphisms and stable categories}

In this section, we will introduce  p-null-homotopical morphisms between module factorizations. In particular, the corresponding stable category $\underline{\mathbf{GF}}(A; \omega)$ is canonically triangulated. We compute explicitly the syzygy of any module factorization.

\begin{defn}\label{defn:pnh}
A morphism $(f^0, f^1)\colon X\rightarrow Y$ between two module factorizations is \emph{p-null-homotopical}, if there exists two morphisms $h^0\colon X^0\rightarrow {^{\sigma^{-1}}(Y^1)}$ and $h^1\colon X^1\rightarrow Y^0$ of $A$-modules such that both $h^i$ factor through projective $A$-modules  and satisfy $f^0=h^1\circ d_X^0+{^{\sigma^{-1}}(d_Y^1)} \circ h^0$ and $f^1=d_Y^0\circ h^1+{^\sigma(h^0)}\circ d_X^1$.
\end{defn}

The following result  justifies the terminology above.

\begin{lem}\label{lem:p-null}
A morphism $(f^0, f^1)\colon X\rightarrow Y$ is p-null-homotopical if and only if it factors through $\theta^0(P)\oplus \theta^1(Q)$ for some projective $A$-modules $P$ and $Q$.
\end{lem}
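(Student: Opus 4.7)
The plan is to exploit the adjunctions in Lemma~\ref{lem:2adj} together with the explicit shapes of the trivial factorizations $\theta^0(M)$ and $\theta^1(M)$ from Example~\ref{exm:theta} to parameterize morphisms into and out of $\theta^0(P)$ and $\theta^1(Q)$, and then to check that composing such morphisms reproduces exactly the two terms appearing in Definition~\ref{defn:pnh}.

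First, I would unwind the adjunctions $(\theta^0, \mathrm{pr}^0)$ and $(\mathrm{pr}^1, \theta^0)$: the first says a morphism $\theta^0(P)\to Y$ is determined by $\beta^0\colon P\to Y^0$ with matching component $\beta^1 = d_Y^0\circ \beta^0$; the second says a morphism $X\to \theta^0(P)$ is determined by $\alpha^1\colon X^1\to P$ with matching component $\alpha^0 = \alpha^1\circ d_X^0$. Composing these shows that an arbitrary morphism $X \to \theta^0(P) \to Y$ has the form $(h^1\circ d_X^0,\; d_Y^0\circ h^1)$ for some $h^1\colon X^1 \to Y^0$ factoring through $P$, which recovers exactly the first summand in the p-null-homotopy formula.

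Next I would carry out the analogous analysis for $\theta^1$. The adjunction $(\theta^1, \mathrm{pr}^1)$ parameterizes morphisms $\theta^1(Q)\to Y$ by maps $Q \to Y^1$, and a direct computation using the differentials of $\theta^1(Q) = ({^{\sigma^{-1}}(Q)}, Q; \omega_{^{\sigma^{-1}}(Q)}, \mathrm{Id}_Q)$ together with the naturality of $\omega_{(-)}\colon \mathrm{Id}\to {^\sigma(-)}$ (and the identity ${^\sigma(d_X^0)}\circ d_X^1 = \omega_{X^1}$) shows that morphisms $X\to \theta^1(Q)$ correspond bijectively to maps $X^0 \to {^{\sigma^{-1}}(Q)}$, with matching component given by applying $^\sigma$ and postcomposing with $d_X^1$. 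Composing yields that any morphism $X \to \theta^1(Q)\to Y$ has components $({^{\sigma^{-1}}(d_Y^1)}\circ h^0,\; {^\sigma(h^0)}\circ d_X^1)$ for some $h^0\colon X^0 \to {^{\sigma^{-1}}(Y^1)}$ that factors through ${^{\sigma^{-1}}(Q)}$, which is projective since the twist functor is an autoequivalence. This recovers the second summand in Definition~\ref{defn:pnh}.

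Combining both halves finishes the proof. For the direction ``factors through $\theta^0(P)\oplus \theta^1(Q)$ $\Rightarrow$ p-null-homotopical'', I would split the given factorization into its two direct summands and read off the required $h^0, h^1$ from the formulas above. For the converse, given $h^0, h^1$ from a p-null-homotopy, I would factor $h^1 = \nu\circ \iota$ through a projective $P$ and write $h^0 = \mu\circ \lambda$ as a composition through a projective module, which I present in the form ${^{\sigma^{-1}}(Q)}$ for a projective $A$-module $Q$; the explicit parameterizations then yield the four morphisms assembling into a factorization through $\theta^0(P)\oplus \theta^1(Q)$. The main obstacle, though not deep, is the bookkeeping with the twist $^\sigma$ in the $\theta^1$-calculations; in particular, one must verify that the adjoint correspondence for $\theta^1$ forces the domain of the ``inner'' data to be $X^0 \to {^{\sigma^{-1}}(Q)}$ rather than $X^0 \to Q$, which is precisely why the codomain ${^{\sigma^{-1}}(Y^1)}$ appears in Definition~\ref{defn:pnh}.
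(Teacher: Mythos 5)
Your proposal is correct and follows essentially the same route as the paper: both arguments come down to observing that morphisms factoring through $\theta^0(P)$ have the shape $(h^1\circ d_X^0,\, d_Y^0\circ h^1)$ and those factoring through $\theta^1(Q)$ have the shape $({^{\sigma^{-1}}(d_Y^1)}\circ h^0,\, {^\sigma(h^0)}\circ d_X^1)$, so that a sum of the two is exactly a p-null-homotopy with $h^0,h^1$ factoring through projectives (the paper simply writes these component formulas down directly, which is the same computation as unwinding the adjunctions of Lemma~\ref{lem:2adj}). One cosmetic point: in the $\theta^1$-parameterization the identity actually used is $d_X^1\circ d_X^0=\omega_{X^0}$ together with naturality of $\omega_{(-)}$, rather than ${^\sigma(d_X^0)}\circ d_X^1=\omega_{X^1}$, but this does not affect the argument.
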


\begin{proof}
For  the ``only if" part, we assume that the homotopy maps $h^0$ and $h^1$ have the following factorizations in $A\mbox{-Mod}$:
$$h^0\colon X^0\stackrel{a}\longrightarrow {^{\sigma^{-1}}(Q)}\xrightarrow{^{\sigma^{-1}}(b)} {^{\sigma^{-1}}(Y^1)} \mbox{ and } h^1\colon X^1\stackrel{p}\longrightarrow P\stackrel{q}\longrightarrow  Y^0.$$
Here, both $P$ and $Q$ are projective $A$-modules. It is routine to verify that the given morphism $(f^0, f^1)$ equals the sum of the following two morphisms:
$$X \xrightarrow{(a,\; {^\sigma(a)}\circ d_X^1)} \theta^1(Q) \xrightarrow{( \sigma^{-1}(d_Y^1\circ b),\; b)} Y \mbox{ and } X \xrightarrow{(p\circ d_X^0, \; p)} \theta^0(P) \xrightarrow{(q, \; d_Y^0\circ q)} Y.$$
It follows that $(f^0, f^1)$ factors through $\theta^0(P)\oplus \theta^1(Q)$, as required. By reversing the argument above, one proves the  ``if" part.
\end{proof}

We denote by $\underline{\mathbf{F}}(A;\omega)$ the stable category modulo p-null-homotopical morphisms, and  by $\underline{\mathbf{GF}}(A; \omega)$ the corresponding stable category of $\mathbf{GF}(A; \omega)$. By Proposition~\ref{prop:GP} and Lemma~\ref{lem:p-null}, the stable category $\underline{\mathbf{GF}}(A; \omega)$ coincides with the usual one for the Frobenius exact category $\mathbf{GF}(A; \omega)$. In particular, $\underline{\mathbf{GF}}(A; \omega)$ is canonically triangulated. For the same reason, we have the following observation.

\begin{prop}
The equivalence $\Phi\colon \mathbf{F}(A;\omega)\simeq \Gamma\mbox{-}{\rm Mod}$ in Proposition~\ref{prop:F-Gamma} induces an equivalence $\underline{\mathbf{F}}(A;\omega)\simeq \Gamma\mbox{-\underline{\rm Mod}}$, which restricts a triangle equivalence $$\underline{\mathbf{GF}}(A;\omega)\stackrel{\sim}\longrightarrow \Gamma\mbox{-\underline{\rm GProj}}.$$
\end{prop}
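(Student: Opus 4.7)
The plan is to reduce both statements to a single bookkeeping fact: under $\Phi$, the p-null-homotopical morphisms in $\mathbf{F}(A;\omega)$ correspond exactly to the morphisms in $\Gamma\mbox{-Mod}$ factoring through a projective $\Gamma$-module. Once this is set up, the equivalence of stable categories is automatic, and its restriction to the Gorenstein subcategory inherits a triangulated structure via Happel's construction.

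First I would combine Lemma~\ref{lem:p-null} with Remark~\ref{rem:proj}. Lemma~\ref{lem:p-null} rephrases ``p-null-homotopical'' as ``factoring through an object of the form $\theta^0(P)\oplus \theta^1(Q)$ with $P,Q\in A\mbox{-Proj}$'', while Remark~\ref{rem:proj} identifies these (up to summands) with the projective objects of the abelian category $\mathbf{F}(A;\omega)$. Hence $\underline{\mathbf{F}}(A;\omega)$ is just the usual projectively stable category of $\mathbf{F}(A;\omega)$. On the other side, $\Gamma\mbox{-}\underline{\rm Mod}$ is by definition the analogous quotient of $\Gamma\mbox{-Mod}$. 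The equivalence $\Phi$ from Proposition~\ref{prop:F-Gamma} is an equivalence of abelian categories and, by Example~\ref{exm:Gamma}, sends $\theta^0(P)$ to $\Gamma e_0\otimes_A P$ and $\theta^1(Q)$ to $\Gamma e_1\otimes_A Q$; so it matches the classes of projective objects on the two sides. Consequently it descends to an additive equivalence $\underline{\Phi}\colon \underline{\mathbf{F}}(A;\omega)\stackrel{\sim}\longrightarrow \Gamma\mbox{-}\underline{\rm Mod}$.

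For the triangulated restriction, Proposition~\ref{prop:GP} identifies $\mathcal{GP}(\mathbf{F}(A;\omega))$ with $\mathbf{GF}(A;\omega)$ and, via $\Phi$, with $\Gamma\mbox{-GProj}$. Both $\mathbf{GF}(A;\omega)$ and $\Gamma\mbox{-GProj}$ carry their standard Frobenius exact structures, with projective-injective objects equal to the ambient projective objects. Since $\Phi$ is exact (as an equivalence of abelian categories) and has already been shown to match projective-injectives with projective-injectives, its restriction yields an equivalence of Frobenius exact categories. Happel's construction of the triangulation on the stable category depends only on the Frobenius structure, so $\underline{\Phi}$ restricts to a triangle equivalence $\underline{\mathbf{GF}}(A;\omega)\stackrel{\sim}\longrightarrow \Gamma\mbox{-}\underline{\rm GProj}$, as desired.

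I do not foresee any real obstacle: the argument is formal once the results of Sections 3 and 4 are invoked. The only point that merits attention is the compatibility of the Frobenius structures under $\Phi$, which is immediate from Proposition~\ref{prop:GP} (matching projective-injective objects on both sides) combined with the fact that $\Phi$ preserves short exact sequences.
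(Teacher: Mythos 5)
Your proposal is correct and follows essentially the same route as the paper: it combines Lemma~\ref{lem:p-null} and Remark~\ref{rem:proj} to match p-null-homotopical morphisms with morphisms factoring through projective $\Gamma$-modules, then uses Proposition~\ref{prop:GP} together with Happel's result to get the restricted triangle equivalence. No gaps.
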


\begin{proof}
 By Lemma~\ref{lem:p-null} and Remark~\ref{rem:proj}, we infer that any morphism $(f_0, f_1)\colon X\rightarrow Y$ is p-null-homotopical if and only if $\Phi(f_0, f_1)\colon \Phi(X)\rightarrow \Phi(Y)$ factors through projective $\Gamma$-modules.  Then the equivalence between the stable categories follows immediately. In view of Proposition~\ref{prop:GP}, we have the restricted equivalence, which  is a triangle functor by the general result \cite[I.2.8]{Hap}.
\end{proof}

The suspension functor of $\underline{\mathbf{GF}}(A;\omega)$ is given by a quasi-inverse of the syzygy functor.  Let us describe the syzygy of a module factorization $X=(X^0, X^1; d_X^0, d_X^1)$.

We have the following commutative diagram with exact rows and both $P^i$ projective.
\[\xymatrix{
0\ar[r] & \Omega_A(X^0)\ar@{.>}[d]_-{\partial^0} \ar[r]^-{j^0} &  P^0\ar@{.>}[d]_-{\delta^0} \ar[r]^{\pi^0} & X^0 \ar[d]^-{d_X^0} \ar[r] & 0\\
0\ar[r] & \Omega_A(X^1) \ar@{.>}[d]_-{\partial^1} \ar[r]^-{j^1} &  P^1 \ar@{.>}[d]_-{\delta^1} \ar[r]^{\pi^1} & X^1\ar[d]^-{d_X^1} \ar[r] & 0\\
0\ar[r] & {^{\sigma}\Omega_A(X^0)} \ar[r]^-{^\sigma(j^0)} &  {^\sigma (P^0)} \ar[r]^{^\sigma(\pi^0)} & ^\sigma(X^0) \ar[r] & 0
}\]
Since $d_X^1\circ d_X^0=\omega_{X^0}$, by a diagram-chasing we have
$${^\sigma(\pi^0)}\circ (\omega_{P^0}-\delta^1\circ \delta^0)=0.$$
There is a unique morphism $h^0\colon P^0\rightarrow {^\sigma \Omega_A(X^0)}$ such that
$$\omega_{P^0}-\delta^1\circ \delta^0={^\sigma(j^0)}\circ h^0.$$
Similarly, there is a unique morphism $h^1\colon P^1\rightarrow {^\sigma \Omega_A(X^1)}$ such that
$$\omega_{P^1}-{^\sigma(\delta^0)}\circ \delta^1={^\sigma(j^1)}\circ h^1.$$
In what follows, we write $\tau=\sigma^{-1}$ and denote the identity map by $1$. We have the following commutative diagram with exact rows.
\[\xymatrix{
0\ar[r] & \Omega_A(X^0)\oplus {^{\tau}(P^1)} \ar[dd]_-{\begin{pmatrix}
    j^0 & -{^\tau(\delta^1)}\\
    \partial^0 & {^\tau(h^1)}
\end{pmatrix}} \ar[rr]^-{\begin{pmatrix} j^0 & -{^{\tau}(\delta^1)}\\
0 & 1\end{pmatrix}} && P^0 \oplus {^{\tau}(P^1)} \ar[dd]^-{\begin{pmatrix}
    1 & 0\\
    0& \omega_{^\tau(P^1)}
\end{pmatrix}} \ar[rr]^-{(\pi^0,\;  {^{\tau}(d_X^1\circ \pi^1)})} && X^0 \ar[dd]^-{d_X^0}\ar[r] & 0\\
\\
0\ar[r] & P^0\oplus \Omega_A(X^1) \ar[dd]_-{\begin{pmatrix}
    h^0 & \partial^1\\
    -\delta^0 & j^1
\end{pmatrix}} \ar[rr]^-{\begin{pmatrix}
    1 & 0\\
    -\delta^0 &  j^1
\end{pmatrix}} && P^0\oplus P^1 \ar[dd]^-{\begin{pmatrix}
    \omega_{P^0} & 0\\
    0 & 1
\end{pmatrix}}\ar[rr]^-{(d_X^0\circ \pi^0, \; \pi^1)} && X^1\ar[r] \ar[dd]^-{d_X^1} & 0\\
\\
0\ar[r] & {^\sigma{\Omega_A(X^0)}}\oplus P^1 \ar[rr]^-{\begin{pmatrix}
    {^\sigma(j^0)} & -\delta^1\\
    0 & 1
\end{pmatrix}} && {^\sigma(P^0)}\oplus P^1 \ar[rr]^-{(^\sigma(\pi^0), \; d_X^1\circ \pi^1)} && {^\sigma(X^0)} \ar[r] & 0
}\]
The middle column represents the module factorization $\theta^0(P^0)\oplus \theta^1(P^1)$, which is projective in $\mathbf{F}(A; \omega)$. It follows that the syzygy of $X$ is isomorphic to the following module factorization
$$(\Omega_A(X^0)\oplus {^\tau(P^1)}, P^0\oplus \Omega_A(X^1); \begin{pmatrix}
    j^0 & -{^\tau(\delta^1)}\\
    \partial^0 & {^\tau(h^1)}
\end{pmatrix}, \begin{pmatrix}
    h^0 & \partial^1\\
    -\delta^0 & j^1
\end{pmatrix} ).$$

\begin{rem}\label{rem:syzygy-S}
If both components $X^0$ and $X^1$ of $X$ are projective, we might take $\Omega_A(X^0)=0=\Omega_A(X^1)$, $P^i=X^i$ and $\delta^i=d_X^i$. It follows that the  syzygy of $X$ is isomorphic to
$$S^{-1}(X)=(^\tau(X^1), X^0; -{^\tau(d_X^1)}, -d_X^0).$$
Here, we recall that $S$ denotes the shift endofunctor on $\mathbf{F}(A; \omega)$; see Section~3.
\end{rem}

 \section{The cokernal functors}

 In this section, we study the cokernel functors from $\mathbf{F}(A; \omega)$ to $\bar{A}\mbox{-Mod}$. We prove that the stable module category over $\bar{A}$ is equivalent to a certain full subcategory of $\underline{\mathbf{F}}(A; \omega)$; see Theorem~\ref{thm:cok0}. This equivalence restricts to the one in Theorem~\ref{thm:cok0G}.

 Let $X=(X^0, X^1; d_X^0, d_X^1)$ be a module factorization. Then ${\rm Cok}^0(X)$ is defined to the cokernel of $d_X^0\colon X^0\rightarrow X^1$, which is naturally an $\bar{A}$-module. For a morphism $(f^0, f^1)\colon X\rightarrow Y$ between module factorizations, there exists a unique morphism $\overline{f^1}$ fitting into  the following commutative diagram with exact rows.
 \[
 \xymatrix{
 X^0\ar[d]_-{f^0}\ar[r]^-{d_X^0} & X^1\ar[d]^-{f^1} \ar[r] & {\rm Cok}(X) \ar@{.>}[d]^-{\overline{f^1}} \ar[r] & 0\\
 Y^0\ar[r]^-{d_Y^0} & Y^1 \ar[r] & {\rm Cok}(Y) \ar[r] & 0
 }\]
 We set ${\rm Cok}^0(f^0, f^1)=\overline{f^1}$. This gives rise to the \emph{zeroth cokernel functor}
 $${\rm Cok}^0\colon \mathbf{F}(A; \omega)\longrightarrow \bar{A}\mbox{-Mod}.$$

\begin{rem}
We also have the first cokernel functor
$${\rm Cok}^1\colon \mathbf{F}(A; \omega)\longrightarrow \bar{A}\mbox{-Mod},$$
which is given such that ${\rm Cok}^1(X)$ is the cokernel of $d_X^1\colon X^1\rightarrow {^\sigma(X^0)}$. There is a natural isomorphism ${\rm Cok}^0 S\simeq {\rm Cok}^1$. Therefore, it suffices to study the zeroth cokernel functor.
\end{rem}

 Denote by $\mathbf{F}^{\rm mp}(A; \omega)$ the full subcategory of $\mathbf{F}(A; \omega)$ formed by those module factorizations $X$ with  $d_X^0$ mono and $X^1$ projective. Here, ``mp" stands for ``mono and projective".

 \begin{rem}\label{rem:tf}
  Let $X=(X^0, X^1; d_X^0, d_X^1)$ be a module factorization. We observe that both $d_X^i$ are monomorphisms if and only if both $X^i$ are $\omega$-torsionfree. In particular, $X$  belongs to $\mathbf{F}^{\rm mp}(A; \omega)$ if and only if  $X^0$ is $\omega$-torsionfree and $X^1$ is projective.
 \end{rem}

 \begin{prop}\label{prop:full-dense}
 The zeroth cokernel functor ${\rm Cok}^0\colon \mathbf{F}^{\rm mp}(A; \omega)\rightarrow \bar{A}\mbox{-}{\rm Mod}$ is full and dense.
 \end{prop}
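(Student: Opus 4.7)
The plan is to handle density and fullness separately, and in both cases the key technical input will be Lemma~\ref{lem:elem}, which forces certain Hom groups to vanish between $\omega$-torsionfree and $\omega$-killed modules.

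For density, given an $\bar{A}$-module $N$, I would start from a projective presentation
\[
0\longrightarrow L\stackrel{j}\longrightarrow P\stackrel{p}\longrightarrow N\longrightarrow 0
\]
of $N$ in $A\mbox{-Mod}$, with $P$ a projective $A$-module. Set $X^0=L$, $X^1=P$, $d_X^0=j$. Since $L\subseteq P$ is a submodule of a projective $A$-module and $\omega$ is regular in $A$, the module $L$ is $\omega$-torsionfree, so by Remark~\ref{rem:tf} the candidate factorization lies in $\mathbf{F}^{\rm mp}(A;\omega)$ as soon as I produce $d_X^1$. Applying Lemma~\ref{lem:factor} to this sequence with $f=\mathrm{id}_L$ yields a morphism $h\colon P\to {^\sigma L}$ satisfying $h\circ j=\omega_L$, so I set $d_X^1=h$; this gives the first factorization identity. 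For the second identity ${^\sigma(j)}\circ h=\omega_P$, precomposition with $j$ shows that $\omega_P-{^\sigma(j)}\circ h$ factors through $p$; since ${^\sigma P}$ is $\omega$-torsionfree and $\omega_N=0$, Lemma~\ref{lem:elem} forces the induced map $N\to {^\sigma P}$ to vanish. Hence $X=(L,P;j,h)$ is a module factorization in $\mathbf{F}^{\rm mp}(A;\omega)$ with ${\rm Cok}^0(X)=N$.

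For fullness, fix $X,Y\in\mathbf{F}^{\rm mp}(A;\omega)$ and a morphism $\bar f\colon {\rm Cok}^0(X)\to {\rm Cok}^0(Y)$. Because $X^1$ is projective and the projection $Y^1\to{\rm Cok}^0(Y)$ is surjective, I can lift the composition $X^1\twoheadrightarrow {\rm Cok}^0(X)\xrightarrow{\bar f}{\rm Cok}^0(Y)$ to some $f^1\colon X^1\to Y^1$. Since $d_Y^0$ is mono and $f^1\circ d_X^0$ vanishes in ${\rm Cok}^0(Y)$, there is a unique $f^0\colon X^0\to Y^0$ with $f^1\circ d_X^0=d_Y^0\circ f^0$, giving the first compatibility. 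For the second, compose the difference ${^\sigma(f^0)}\circ d_X^1-d_Y^1\circ f^1$ with $d_X^0$; using the defining identities $d_Y^1\circ d_Y^0=\omega_{Y^0}$ and the naturality of $\omega_{-}$, this composition is zero, so the difference factors through ${\rm Cok}^0(X)$. The target ${^\sigma(Y^0)}$ is again $\omega$-torsionfree (the automorphism $\sigma$ fixes $\omega$ since $\omega$ is right regular, hence $\omega_{^\sigma(Y^0)}$ is mono iff $\omega_{Y^0}$ is), and ${\rm Cok}^0(X)$ is $\omega$-killed, so Lemma~\ref{lem:elem} forces the difference to vanish. Thus $(f^0,f^1)\colon X\to Y$ is a morphism in $\mathbf{F}(A;\omega)$ with ${\rm Cok}^0(f^0,f^1)=\bar f$.

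The only genuine subtlety I foresee is the automatic verification of the second compatibility ${^\sigma(f^0)}\circ d_X^1=d_Y^1\circ f^1$: it is not a consequence of the lifting construction itself, but rather of the mono/projective hypotheses on $X$ and $Y$ combined with the $\omega$-torsionfreeness of ${^\sigma(Y^0)}$ via Lemma~\ref{lem:elem}. In the density argument the same vanishing principle takes care of the second factorization identity. Both steps hinge on the same elementary observation, which is exactly what the assumptions defining $\mathbf{F}^{\rm mp}(A;\omega)$ are designed to enforce.
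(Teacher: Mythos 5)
Your proposal is correct and takes essentially the same route as the paper: density is obtained from a projective presentation of $N$ together with Lemma~\ref{lem:factor}, with Lemma~\ref{lem:elem} forcing the second factorization identity, and fullness by lifting along the projectivity of $X^1$ and the monomorphism $d_Y^0$. The only (harmless) deviation is in verifying ${^\sigma(f^0)}\circ d_X^1=d_Y^1\circ f^1$: you precompose the difference with $d_X^0$ and then apply Lemma~\ref{lem:elem} using that ${^\sigma(Y^0)}$ is $\omega$-torsionfree, whereas the paper postcomposes with the monomorphism ${^\sigma(d_Y^0)}$ and cancels it; both are valid one-step checks.
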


\begin{proof}
For the density of ${\rm Cok}^0$, we take an arbitrary $\bar{A}$-module $N$. Take a  short exact sequence
$$\xi\colon 0\longrightarrow L\stackrel{j}\longrightarrow P\stackrel{p}\longrightarrow N\longrightarrow 0$$
of $A$-modules with $P$ projective. Applying Lemma~\ref{lem:factor} to $\xi$ and ${\rm Id}_L$, we obtain a morphism $k\colon P\rightarrow {^\sigma(L)}$ satisfying $k\circ j=\omega_L$. We have
$$(\omega_P-{^\sigma(j)}\circ k)\circ j=\omega_P\circ j-{^\sigma(j)}\circ \omega_L=0.$$
Therefore, the map $\omega_P-{^\sigma(j)}\circ k\colon P\rightarrow {^\sigma(P)}$ factors through $N$. However, we have that $\omega_N=0$ and $(^\sigma(P))$ is $\omega$-torsionfree. By Lemma~\ref{lem:elem}, the factorization has to be trivial. In other words, we have $\omega_P-{^\sigma(j)}\circ k=0$. So, we obtain a module factorization $(L, P; j, k)$ in $\mathbf{F}^{\rm mp}(A; \omega)$.  In view of $\xi$, this is the required module factorization.

Take two module factorizations $X$ and $Y$. Fix an arbitrary morphism $g\colon {\rm Cok}^0(X)\rightarrow {\rm Cok}^0(Y)$ in $\bar{A}\mbox{-Mod}$.  The projectivitity of $X^1$ yields the following commutative diagram.
\[
\xymatrix{
0\ar[r] & X^0 \ar@{.>}[d]_-{f^0} \ar[r]^-{d_X^0} & X^1\ar@{.>}[d]^-{f^1}\ar[r] & {\rm Cok}(X)\ar[d]^-{g} \ar[r] & 0\\
0\ar[r] & Y^0 \ar[r]^-{d_Y^0} & Y^1\ar[r] & {\rm Cok}(Y) \ar[r] & 0
}\]
To show that $(f^0, f^1)\colon X\rightarrow Y$ is a morphism, it remains to prove $d_Y^1\circ f^1={^\sigma(f^0)}\circ d_X^1$. For this end, we have the following identity.
\begin{align*}
&{^\sigma(d_Y^0)}\circ (d_Y^1\circ f^1-{^\sigma(f^0)}\circ d_X^1)\\
&=\omega_{Y^1}\circ f^1-{^\sigma(d_Y^0\circ f^0)} \circ d_X^1 \\
&= \omega_{Y^1}\circ f^1-{^\sigma(f^1\circ d_X^0)} \circ d_X^1\\
&=\omega_{Y^1}\circ f^1-{^\sigma(f^1)}\circ \omega_{X^1}=0
\end{align*}
Since $d_Y^0$ is mono, we obtain the desired equality. This completes the proof of the fullness.
\end{proof}

\begin{lem}\label{lem:Cok}
Let $(f^0, f^1)\colon X\rightarrow Y$ be a morphism in $\mathbf{F}^{\rm mp}(A; \omega)$. Then the following two statements hold.
\begin{enumerate}
\item ${\rm Cok}^0(f^0, f^1)=0$ if and only if $(f^0, f^1)$ factors through $\theta^0(P)$ for some projective $A$-module $P$
\item The morphism ${\rm Cok}^0(f^0, f^1)$ factors though a projective $\bar{A}$-module if and only if $(f^0, f^1)$ is p-null-homotopical.
\end{enumerate}
\end{lem}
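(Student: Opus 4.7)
The plan is to establish part (1) by an explicit construction and then deduce part (2) by combining (1) with the fullness statement in Proposition~\ref{prop:full-dense}.

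For (1), the ``if'' direction is immediate since the zeroth differential of $\theta^0(P)$ is $\mathrm{Id}_P$, forcing $\mathrm{Cok}^0(\theta^0(P))=0$. For the converse, the hypothesis $\mathrm{Cok}^0(f^0, f^1)=0$ is equivalent to the existence of $h^1\colon X^1\to Y^0$ with $f^1=d_Y^0\circ h^1$. Set $P=X^1$, which is projective because $X\in \mathbf{F}^{\mathrm{mp}}(A;\omega)$. I would then exhibit the factorization
\[
X \xrightarrow{(d_X^0,\,\mathrm{Id}_{X^1})} \theta^0(X^1) \xrightarrow{(h^1,\,f^1)} Y,
\]
where the first arrow is a morphism of module factorizations thanks to the defining identity ${^\sigma(d_X^0)}\circ d_X^1=\omega_{X^1}$ of $X$, and the second is the morphism corresponding to $h^1$ under the $(\theta^0,\mathrm{pr}^0)$-adjunction of Lemma~\ref{lem:2adj}. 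The composite is $(h^1\circ d_X^0,\, f^1)$, and $h^1\circ d_X^0=f^0$ follows by pre-composing $f^1=d_Y^0\circ h^1$ with $d_X^0$ and cancelling the monic $d_Y^0$ against $f^1\circ d_X^0=d_Y^0\circ f^0$.

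For (2), the ``if'' direction follows from Lemma~\ref{lem:p-null}: a p-null-homotopical $(f^0, f^1)$ factors through $\theta^0(P)\oplus\theta^1(Q)$ for projective $A$-modules $P, Q$, so $\mathrm{Cok}^0(f^0, f^1)$ factors through $\mathrm{Cok}^0(\theta^0(P)\oplus \theta^1(Q))\cong Q/\omega Q$, which is a projective $\bar{A}$-module. Here I use the identification $\mathrm{Cok}^0(\theta^1(Q))\cong Q/\omega Q$, which hinges on the identity $\sigma^{-1}(\omega)=\omega$ obtained from $\omega a=\sigma(a)\omega$ at $a=\sigma^{-1}(\omega)$ together with regularity of $\omega$. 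For the ``only if'' direction, suppose $\mathrm{Cok}^0(f^0, f^1)$ factors through a projective $\bar{A}$-module $\bar{P}$. Embedding $\bar{P}$ as a direct summand of a free $\bar{A}$-module $Q/\omega Q$ for a suitable free $A$-module $Q$, I may reduce to a factorization $\overline{f^1}=\beta\circ \alpha$ with $\alpha\colon \mathrm{Cok}^0(X)\to Q/\omega Q$ and $\beta\colon Q/\omega Q\to \mathrm{Cok}^0(Y)$. Since $\theta^1(Q)\in \mathbf{F}^{\mathrm{mp}}(A;\omega)$, Proposition~\ref{prop:full-dense} lifts $\alpha$ and $\beta$ to morphisms of module factorizations $g_1\colon X\to \theta^1(Q)$ and $g_2\colon \theta^1(Q)\to Y$. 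Then $\mathrm{Cok}^0((f^0,f^1)-g_2\circ g_1)=0$, so by part (1) the difference factors through some $\theta^0(P)$. Hence $(f^0, f^1)$ itself factors through $\theta^0(P)\oplus \theta^1(Q)$ and is p-null-homotopical by Lemma~\ref{lem:p-null}.

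The principal technical point is the reduction ``without loss of generality $\bar{P}$ is of the form $Q/\omega Q$,'' which I would handle by embedding a projective $\bar{A}$-module as a direct summand of a free one and observing that free $\bar{A}$-modules have the form $Q/\omega Q$ for free $A$-modules $Q$. Everything else is book-keeping: verifying $\mathrm{Cok}^0(\theta^1(Q))\cong Q/\omega Q$, invoking fullness from Proposition~\ref{prop:full-dense}, and splicing the two resulting lifts together via part (1).
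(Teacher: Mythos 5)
Your proposal is correct and follows essentially the same route as the paper: part (1) via the explicit factorization $X \xrightarrow{(d_X^0,\,\mathrm{Id}_{X^1})} \theta^0(X^1) \xrightarrow{(h^1,\,f^1)} Y$, and part (2) by reducing to a free $\bar{A}$-module of the form $Q/\omega Q \cong \mathrm{Cok}^0(\theta^1(Q))$, lifting the two factors through fullness (Proposition~\ref{prop:full-dense}), and applying (1) to the difference before invoking Lemma~\ref{lem:p-null}. The extra details you supply (monicity of $d_Y^0$ to recover $f^0=h^1\circ d_X^0$, and $\sigma(\omega)=\omega$ to identify $\mathrm{Cok}^0(\theta^1(Q))$ with $Q/\omega Q$) are accurate and only make explicit what the paper leaves implicit.
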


\begin{proof}
(1) The ``if" part is trivial, since ${\rm Cok}^0(\theta^0(P))=0$. For the ``only if" part, we assume that ${\rm Cok}^0(f^0, f^1)=0$. Then there is a morphism  $h\colon X^1\rightarrow Y^0$ satisfying $f^0=h\circ d_X^0$ and $f^1=d_Y^0\circ h$. Therefore, $(f^0, f^1)$ has the following factorization.
$$X \xrightarrow{(d_X^0, {\rm Id}_X)} \theta^0(X^1)\xrightarrow{(h, f^1)} Y$$
Here, we implicitly use the following identity
$$d_Y^1\circ f^1=d_Y^1\circ (d_Y^0\circ h)=\omega_{Y^0}\circ h={^\sigma(h)}\circ \omega_{X^1}.$$
Since $X^1$ is projective, we are done.

(2) Recall from Lemma~\ref{lem:p-null} that $(f^0, f^1)$ is p-null-homotopical if and only if it factors through $\theta^0(P)\oplus \theta^1(Q)$ for some projective $A$-modules $P$ and $Q$. Since ${\rm Cok}^0(\theta^0(P)\oplus \theta^1(Q))\simeq Q/{\omega Q}$ is a projective $\bar{A}$-module, the ``if" part follows immediately.

For the ``only if" part, we assume that  ${\rm Cok}^0(f^0, f^1)=a$ factors through a free $\bar{A}$-module $V$. We may assume that there is a free $A$-module $F$ with $F/{\omega F}=V$. So, we identify $V$ with ${\rm Cok}^0(\theta^1(F))$. Suppose that $x\colon {\rm Cok}^0(X)\rightarrow {\rm Cok}^0(\theta^1(F))$ and $y\colon {\rm Cok}^0(\theta^1(F))\rightarrow Y$ satisfy $a=y\circ x$. By the fullness in Proposition~\ref{prop:full-dense}, there are morphisms $(g^0, g^1)\colon X\rightarrow \theta^1(F)$ and $(k^0, k^1)\colon \theta^1(F)\rightarrow Y$ in $\mathbf{F}^{\rm mp}(A; \omega)$ such that $x={\rm Cok}^0(g^0, g^1)$ and $y={\rm Cok}^0(k^0, k^1)$. We conclude that the following morphism
$$(f^0, f^1)-(k^0, k^1)\circ (h^0, h^1)$$
is annihilated by ${\rm Cok}^0$. By (1), this morphism factors through $\theta^0(P)$ for some projective $A$-module $P$. Consequently, $(f^0, f^1)$ factors through $\theta^0(P)\oplus \theta^1(F)$. It is p-null-homotopical by Lemma~\ref{lem:p-null}.
\end{proof}

\begin{thm}\label{thm:cok0}
The zeroth cokernel functor ${\rm Cok}^0\colon \mathbf{F}^{\rm mp}(A; \omega)\rightarrow \bar{A}\mbox{-}{\rm Mod}$ induces an equivalence
$${\rm Cok}^0\colon \underline{\mathbf{F}}^{\rm mp}(A; \omega)\stackrel{\sim}\longrightarrow \bar{A}\mbox{-}\underline{\rm Mod}.$$
\end{thm}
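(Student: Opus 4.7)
The plan is to observe that this theorem is essentially a packaged consequence of Proposition~\ref{prop:full-dense} and Lemma~\ref{lem:Cok}, so the work reduces to assembling four verifications in the standard manner for inducing an equivalence between stable categories.

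First I would check that the functor ${\rm Cok}^0$ genuinely descends to a functor $\underline{\mathbf{F}}^{\rm mp}(A;\omega)\to \bar{A}\mbox{-}\underline{\rm Mod}$. For this I only need the ``if'' direction of Lemma~\ref{lem:Cok}(2): any p-null-homotopical morphism $(f^0,f^1)$ has ${\rm Cok}^0(f^0,f^1)$ factoring through a projective $\bar{A}$-module, hence becomes zero in $\bar{A}\mbox{-}\underline{\rm Mod}$. So the induced functor is well-defined on the stable categories.

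Next, density of the induced functor is immediate from the density half of Proposition~\ref{prop:full-dense}, since any $\bar{A}$-module already lifts to $\mathbf{F}^{\rm mp}(A;\omega)$ before passing to stable categories. For fullness, I take an arbitrary morphism $\bar g\colon {\rm Cok}^0(X)\to {\rm Cok}^0(Y)$ in $\bar{A}\mbox{-}\underline{\rm Mod}$, pick any representative $g\colon {\rm Cok}^0(X)\to {\rm Cok}^0(Y)$ in $\bar{A}\mbox{-Mod}$, and apply the fullness half of Proposition~\ref{prop:full-dense} to obtain $(f^0,f^1)\colon X\to Y$ with ${\rm Cok}^0(f^0,f^1)=g$; its class in $\underline{\mathbf{F}}^{\rm mp}(A;\omega)$ maps to $\bar g$. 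Faithfulness then follows from the ``only if'' direction of Lemma~\ref{lem:Cok}(2): if $(f^0,f^1)\colon X\to Y$ represents a morphism whose image ${\rm Cok}^0(f^0,f^1)$ factors through a projective $\bar{A}$-module, then $(f^0,f^1)$ is already p-null-homotopical, so it is zero in $\underline{\mathbf{F}}^{\rm mp}(A;\omega)$.

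There is no substantial obstacle; all the genuine content has been absorbed into the two previous results. The only mild subtlety worth flagging is the asymmetry in Lemma~\ref{lem:Cok}(2): for fullness I invoke Proposition~\ref{prop:full-dense} on lifted morphisms in $\bar{A}\mbox{-Mod}$, while for faithfulness I need precisely the biconditional of Lemma~\ref{lem:Cok}(2) to rule out the possibility that some non-p-null-homotopical morphism becomes zero after projection. These two ingredients together close the argument in a few lines.
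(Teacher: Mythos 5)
Your proposal is correct and follows essentially the same route as the paper: well-definedness and faithfulness come from Lemma~\ref{lem:Cok}(2) (together with the characterization of p-null-homotopical morphisms), while fullness and density are inherited directly from Proposition~\ref{prop:full-dense}. No gaps.
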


\begin{proof}
By Lemmas~\ref{lem:p-null} and~\ref{lem:Cok}(2), the induced functor ${\rm Cok}^0\colon \underline{\mathbf{F}}^{\rm mp}(A; \omega)\rightarrow \bar{A}\mbox{-}\underline{\rm Mod}$ is well-defined and faithful. Its fullness and denseness follow from the ones in Proposition~\ref{prop:full-dense}. Then we are done.
\end{proof}

Recall that $\mathbf{G}^0\mathbf{F}(A; \omega)$ denotes the full subcategory of $\mathbf{G}\mathbf{F}(A; \omega)$ formed by those module factorizations $X=(X^0, X^1; d_X^0, d_X^1)$ with $X^1$ projective and $X^0$ Gorenstein projective. It is closed under extensions and contains all projective-injective objects in $\mathbf{G}\mathbf{F}(A; \omega)$. Moreover, it is closed under taking syzygies and (relative) cosyzgyies; see Section~4. It follows that $\mathbf{G}^0\mathbf{F}(A; \omega)$ is a Frobenius exact category. The stable category $\underline{\mathbf{G}^0\mathbf{F}}(A; \omega)$ is a triangulated subcategory of $\underline{\mathbf{GF}}(A; \omega)$.

\begin{thm}\label{thm:cok0G}
The zeroth cokernal functor ${\rm Cok}^0\colon \mathbf{F}^{\rm mp}(A; \omega)\rightarrow \bar{A}\mbox{-}{\rm Mod}$ induces a triangle equivalence
$${\rm Cok}^0\colon \underline{\mathbf{G}^0\mathbf{F}}(A; \omega)\stackrel{\sim}\longrightarrow \bar{A}\mbox{-}\underline{\rm GProj}.$$
\end{thm}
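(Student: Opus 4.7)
The plan is to deduce Theorem~\ref{thm:cok0G} as a restriction of the equivalence of Theorem~\ref{thm:cok0} to the triangulated subcategory $\underline{\mathbf{G}^0\mathbf{F}}(A;\omega)$. First I would note the inclusion $\mathbf{G}^0\mathbf{F}(A;\omega)\subseteq\mathbf{F}^{\rm mp}(A;\omega)$: any $X\in\mathbf{G}^0\mathbf{F}(A;\omega)$ has $X^1$ projective by definition, and $X^0$ Gorenstein projective, hence $\omega$-torsionfree, so Remark~\ref{rem:tf} applies. Consequently, the equivalence $\mathrm{Cok}^0\colon\underline{\mathbf{F}}^{\rm mp}(A;\omega)\stackrel{\sim}{\to}\bar{A}\mbox{-}\underline{\rm Mod}$ restricts to a fully faithful functor $\mathrm{Cok}^0\colon\underline{\mathbf{G}^0\mathbf{F}}(A;\omega)\to\bar{A}\mbox{-}\underline{\rm Mod}$.

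The next step is to identify the essential image precisely with $\bar{A}\mbox{-}\underline{\rm GProj}$. For any $X\in\mathbf{G}^0\mathbf{F}(A;\omega)$, the defining short exact sequence $0\to X^0\to X^1\to\mathrm{Cok}^0(X)\to 0$ has $X^1$ projective and $X^0\in A\mbox{-GProj}$, so Theorem~\ref{thm:GP} gives $\mathrm{Cok}^0(X)\in\bar{A}\mbox{-GProj}$. Conversely, given $N\in\bar{A}\mbox{-GProj}$, the construction in the density part of the proof of Proposition~\ref{prop:full-dense} produces a module factorization $(L,P;j,k)\in\mathbf{F}^{\rm mp}(A;\omega)$ with $\mathrm{Cok}^0\simeq N$, where $L=\Omega_A(N)$; Theorem~\ref{thm:GP} forces $L\in A\mbox{-GProj}$, so this lift lies in $\mathbf{G}^0\mathbf{F}(A;\omega)$. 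Combined with the fully faithfulness above, this yields an equivalence of ordinary categories $\underline{\mathbf{G}^0\mathbf{F}}(A;\omega)\stackrel{\sim}{\to}\bar{A}\mbox{-}\underline{\rm GProj}$.

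To promote this to a triangle equivalence, I would invoke the standard principle of \cite{Hap}, I.2: an exact functor between Frobenius exact categories that sends projective-injective objects to projective-injective objects induces a triangle functor between their stable categories. Both sides are Frobenius exact ($\mathbf{G}^0\mathbf{F}(A;\omega)$ by the paragraph preceding the theorem, and $\bar{A}\mbox{-GProj}$ by \cite[Proposition~3.8]{Bel}). The restriction $\mathrm{Cok}^0\colon\mathbf{G}^0\mathbf{F}(A;\omega)\to\bar{A}\mbox{-GProj}$ is exact: for any short exact sequence $0\to X\to Y\to Z\to 0$ in $\mathbf{G}^0\mathbf{F}(A;\omega)$ the three differentials $d^0$ are monic, so the snake lemma applied to the two-row diagram of components yields an exact sequence $0\to\mathrm{Cok}^0(X)\to\mathrm{Cok}^0(Y)\to\mathrm{Cok}^0(Z)\to 0$. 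Preservation of projective-injectives reduces to the direct computations $\mathrm{Cok}^0(\theta^0(P))=0$ and $\mathrm{Cok}^0(\theta^1(Q))=Q/{\omega Q}$, the latter being a projective $\bar{A}$-module.

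The most delicate step, and the conceptual heart of the argument, is the essential surjectivity in the Gorenstein projective direction, namely lifting $N\in\bar{A}\mbox{-GProj}$ back to a factorization whose $X^0$-component is Gorenstein projective over $A$. This step depends entirely on Theorem~\ref{thm:GP}, which asserts that the first $A$-syzygy of a Gorenstein projective $\bar{A}$-module is Gorenstein projective over $A$. Without that result the restricted functor would not obviously land surjectively in $\bar{A}\mbox{-}\underline{\rm GProj}$, and the rest of the argument — an application of Theorem~\ref{thm:cok0}, the snake lemma, and the Frobenius framework — would have nothing to produce.
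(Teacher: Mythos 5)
Your proposal is correct and follows essentially the same route as the paper: inclusion $\mathbf{G}^0\mathbf{F}(A;\omega)\subseteq\mathbf{F}^{\rm mp}(A;\omega)$ via $\omega$-torsionfreeness of Gorenstein projectives, restriction of the equivalence of Theorem~\ref{thm:cok0}, identification of the essential image through Theorem~\ref{thm:GP} (in both directions, using the lift from Proposition~\ref{prop:full-dense}), and the triangle structure via exactness, preservation of projective objects, and \cite[I.2.8]{Hap}. Your added details (snake lemma for exactness, the computations ${\rm Cok}^0(\theta^0(P))=0$ and ${\rm Cok}^0(\theta^1(Q))\simeq Q/\omega Q$) merely make explicit what the paper leaves implicit.
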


\begin{proof}
We observe that any Gorenstein projective $A$-module is $\omega$-torsionfree. In view of Remark~\ref{rem:tf}, we have $\mathbf{G}^0\mathbf{F}(A; \omega)\subseteq \mathbf{F}^{\rm mp}(A; \omega)$. By the equivalence in Theorem~\ref{thm:cok0}, the required one follows immediately from the following fact: for any $X$ in $\mathbf{F}^{\rm mp}(A; \omega)$, the $\bar{A}$-module ${\rm Cok}^0(X)$ is Gorenstein projective if and only if $X^0$ is a Gorenstein projective $A$-module. The latter fact is just a reformulation of Theorem~\ref{thm:GP}.

To see that the induced equivalence is a triangle functor, we observe that
$${\rm Cok}^0\colon \mathbf{G}^0\mathbf{F}(A; \omega)\longrightarrow \bar{A}\mbox{-}{\rm GProj}$$
is exact and sends projective objects to projective objects. Then we just apply the general result in \cite[I.2.8]{Hap}.
\end{proof}

\section{A recollement}

In this section, we obtain a recollement involving $\underline{\mathbf{GF}}(A; \omega)$ in Proposition~\ref{prop:recGF}. We prove that the zeroth cokernal functor induces an equivalence between $\bar{A}\mbox{-}\underline{\rm GProj}$ and a certain Verdier quotient of $\underline{\mathbf{GF}}(A; \omega)$; see Theorem~\ref{thm:quotient}.

Recall that a diagram of triangle functors between triangulated
categories

\[\xymatrix{
  \mathcal{T}'\;\ar[rr]|-{i}&&\;\mathcal{T}\; \ar[rr]|-{j}
  \ar@/^1.5pc/[ll]|{i_\rho}\ar@/_1.5pc/[ll]|{i_\lambda}&&
  \;\mathcal{T}''\ar@/^1.5pc/[ll]|{j_\rho}\ar@/_1.5pc/[ll]|{j_\lambda}
}\]
 forms a \emph{recollement} in the sense of \cite[1.4]{BBD}, provided that the
following conditions are satisfied:
\begin{enumerate}
\item[{\rm (R1)}] the pairs $(i_\lambda, i)$, $(i, i_\rho)$, $(j_\lambda, j)$  and $(j,j_\rho)$ are adjoint;
\item[{\rm (R2)}] the functors $i$, $j_\lambda$ and $j_\rho$ are fully faithful;
\item[{\rm (R3)}] ${\rm Im}\; i={\rm Ker}\;  j$.
\end{enumerate}
Here, for an additive functor $F$, ${\rm Im}\; F$ and ${\rm Ker}\; F$ denotes the essential
image and kernel of $F$, respectively.

The following fact is well known.

\begin{lem}\label{lem:rec}
Let $j\colon \mathcal{T}\rightarrow \mathcal{T}''$ be a triangle functor between triangulated category. Assume that $j$ has a left adjoint $j_\lambda$ and a right adjoint $j_\rho$, and that $j_\rho$ is fully faithful. Then we have a recollement.
\[\xymatrix{
  {\rm Ker}\; j\;\ar[rr]|-{\rm inc}&&\;\mathcal{T}\; \ar[rr]|-{j}
  \ar@/^1.5pc/[ll]|{{\rm inc}_\rho}\ar@/_1.5pc/[ll]|{{\rm inc}_\lambda}&&
  \;\mathcal{T}''\ar@/^1.5pc/[ll]|{j_\rho}\ar@/_1.5pc/[ll]|{j_\lambda}
}\]
Here, ``${\rm inc}$" denotes the inclusion functor. Moreover, ${\rm inc}_\rho$ induces a triangle equivalence
$$\mathcal{T}/{{\rm Im}\; j_\rho}\simeq {\rm Ker}\; j.$$
\end{lem}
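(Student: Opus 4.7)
The plan is standard recollement bookkeeping, relying only on the hypothesis that $j_\rho$ is fully faithful and that $j$ admits both adjoints.

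First I would upgrade the hypothesis by showing that $j_\lambda$ is also fully faithful. Since the counit $jj_\rho\to\mathrm{id}$ is invertible, for any $Y,Z\in\mathcal{T}''$ the chain of natural isomorphisms
\[
\Hom(jj_\lambda Y,Z)\cong \Hom(j_\lambda Y,j_\rho Z)\cong \Hom(Y,jj_\rho Z)\cong \Hom(Y,Z)
\]
combined with Yoneda yields $jj_\lambda\cong\mathrm{id}$, so $j_\lambda$ is fully faithful.

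Second, I would construct the adjoints of $\mathrm{inc}\colon\mathrm{Ker}\,j\hookrightarrow\mathcal{T}$. For $X\in\mathcal{T}$, complete the unit $X\to j_\rho j(X)$ to a distinguished triangle
\[
\mathrm{inc}_\rho(X)\longrightarrow X\longrightarrow j_\rho j(X)\longrightarrow \Sigma\,\mathrm{inc}_\rho(X).
\]
Applying $j$ and using $jj_\rho\cong\mathrm{id}$, the middle arrow becomes invertible, so $j(\mathrm{inc}_\rho(X))=0$. For any $Y\in\mathrm{Ker}\,j$, applying $\Hom(Y,-)$ to the triangle and using $\Hom(Y,j_\rho j(X))\cong \Hom(j(Y),j(X))=0$ in all degrees yields $\Hom(Y,\mathrm{inc}_\rho(X))\cong \Hom(Y,X)$; this is the required adjunction $(\mathrm{inc},\mathrm{inc}_\rho)$. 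Dually, the pair $(\mathrm{inc}_\lambda,\mathrm{inc})$ arises from completing the counit $j_\lambda j(X)\to X$ to a triangle, using $jj_\lambda\cong\mathrm{id}$. Promoting $\mathrm{inc}_\rho$ and $\mathrm{inc}_\lambda$ to triangle functors is automatic from the adjunctions. At this point (R1) follows from the four adjoint pairs, (R2) holds since $\mathrm{inc}$, $j_\rho$, and $j_\lambda$ are all fully faithful, and (R3) is tautological.

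For the final triangle equivalence, I would first observe that $\mathrm{inc}_\rho$ vanishes on $\mathrm{Im}\,j_\rho$: for $W=j_\rho Z$, the unit at $W$ is invertible by the triangular identities, so the cone $\mathrm{inc}_\rho(W)$ is zero. Thus $\mathrm{inc}_\rho$ descends to a triangle functor $\bar F\colon \mathcal{T}/\mathrm{Im}\,j_\rho\to \mathrm{Ker}\,j$. Essential surjectivity is immediate: if $X\in\mathrm{Ker}\,j$, then $j_\rho j(X)=0$ and the defining triangle yields $\mathrm{inc}_\rho(X)\cong X$. For fully faithfulness, the adjunction $(j,j_\rho)$ gives the orthogonality $\Hom(\mathrm{Ker}\,j,\mathrm{Im}\,j_\rho)=0$, so the canonical functor $\mathrm{Ker}\,j\hookrightarrow \mathcal{T}/\mathrm{Im}\,j_\rho$ is fully faithful by the standard calculus of fractions; together with the essential surjectivity of $\bar F$ this forces $\bar F$ to be a triangle equivalence. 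The main obstacle is purely formal, namely making the cone constructions of $\mathrm{inc}_\rho$ and $\mathrm{inc}_\lambda$ functorial and triangle-preserving, which is routine once the $\Hom$-vanishings above are in hand.
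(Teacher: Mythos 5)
Your proposal is correct and follows essentially the route the paper takes, which it mostly delegates to references: the cone-of-the-unit construction of $\mathrm{inc}_\rho$ (and dually $\mathrm{inc}_\lambda$) is exactly the Bondal--Kapranov argument cited there, and the identification of $\mathrm{Ker}\,j$ with the Verdier quotient $\mathcal{T}/\mathrm{Im}\,j_\rho$ is the standard Bousfield-localization fact cited from Keller. Your only real deviation is proving full faithfulness of $j_\lambda$ by the direct adjunction-plus-Yoneda computation, whereas the paper applies Gabriel--Zisman twice ($j_\rho$ fully faithful implies $j$ is a quotient functor, which implies $j_\lambda$ fully faithful); both are fine, and your version is self-contained. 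One point to tighten at the very end: ``$G\colon \mathrm{Ker}\,j\to\mathcal{T}/\mathrm{Im}\,j_\rho$ fully faithful together with essential surjectivity of $\bar F$'' does not by itself force $\bar F$ to be an equivalence (what you have at that stage is $\bar F G\simeq \mathrm{id}$ plus $G$ fully faithful, which in general does not suffice). You also need that $G$ is dense, equivalently that the counit $G\bar F\to\mathrm{id}$ is invertible; this is immediate from the same defining triangle, since its third term $j_\rho j(X)$ lies in $\mathrm{Im}\,j_\rho$ and hence dies in the quotient, so $\mathrm{inc}_\rho(X)\to X$ becomes an isomorphism in $\mathcal{T}/\mathrm{Im}\,j_\rho$. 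With that one line added, the argument is complete.
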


\begin{proof}
The proof is very similar to the one of \cite[Lemma~2.4]{Chen-VB}, and relies on  \cite[Propositions~1.5 and 1.6]{BK} and \cite[Lemma~8.3]{Kel}.  By applying \cite[I.1.3]{GZ} twice, we observe  that the fully-faithfulness of $j_\rho$ implies that $j$ is a quotient functor, which in turn implies the fully-faithfulness of $j_\lambda$.
\end{proof}

\begin{rem}\label{rem:const}
The functor ${\rm inc}_\rho$ is given by the cocone of the unit of the adjoint pair $(j, j_\rho)$.  In other words, any object $X\in \mathcal{T}$, ${\rm inc}_\rho (X)$ is determined by the following exact triangle
 $${\rm inc}_\rho (X)\longrightarrow X\longrightarrow j_\rho j(X)\longrightarrow \Sigma {\rm inc}_\rho(X).$$
 Here, $\Sigma$ denotes the suspension functor of $\mathcal{T}$.
\end{rem}

The functors $\theta^i\colon A\mbox{-}{\rm GProj}\rightarrow \mathbf{GF}(A; \omega)$ are both exact and send projective objects to projective objects. Therefore, we have the induced triangle functors between the stable categories, still denoted by the same notation $\theta^i$. Similarly, we have the projection  functor ${\rm pr}^1\colon \underline{\mathbf{GF}}(A; \omega)\rightarrow A\mbox{-}\underline{\rm GProj}$, which is also a triangle functor.

\begin{prop}\label{prop:recGF}
We have the following recollement.
\[
\xymatrix{
\underline{\mathbf{G}^0\mathbf{F}}(A; \omega)  \ar[rr]|{\rm inc} &&  \ar@/_1.5pc/[ll]|{{\rm inc}_\lambda}  \ar@/^1.5pc/[ll]|{{\rm inc}_\rho}  \underline{\mathbf{G}\mathbf{F}}(A; \omega)  \ar[rr]|{\rm pr^1} && A\mbox{-}\underline{\rm GProj} \ar@/_1.5pc/[ll]|{\theta^1}  \ar@/^1.5pc/[ll]|{\theta^0}
} \]
In particular, ${\rm inc}_\rho$ induces a triangle equivalence
$$\underline{\mathbf{GF}}(A; \omega)/{{\rm Im}\; \theta^0}\stackrel{\sim}\longrightarrow \underline{\mathbf{G}^0\mathbf{F}}(A; \omega).$$
\end{prop}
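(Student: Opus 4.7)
The plan is to deduce the recollement as a direct application of Lemma~\ref{lem:rec} to $j = {\rm pr}^1\colon \underline{\mathbf{GF}}(A; \omega)\rightarrow A\mbox{-}\underline{\rm GProj}$, with $j_\lambda = \theta^1$ and $j_\rho = \theta^0$. The two adjoint pairs $(\theta^1, {\rm pr}^1)$ and $({\rm pr}^1, \theta^0)$ at the abelian/module level are already furnished by Lemma~\ref{lem:2adj}, so the real work lies in descending them to triangle adjunctions on the Frobenius stable categories.

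First I would verify that each of $\theta^0$, $\theta^1$ and ${\rm pr}^1$ restricts correctly and respects the Frobenius structures. The functor ${\rm pr}^1$ is patently exact and sends any $X\in \mathbf{GF}(A; \omega)$ to $X^1\in A\mbox{-}{\rm GProj}$. For $\theta^i$, the inclusion $\theta^i(A\mbox{-}{\rm GProj})\subseteq \mathbf{GF}(A; \omega)$ follows since the twist autoequivalence $(-)^\sigma$ preserves $A\mbox{-}{\rm GProj}$. All three functors preserve projective-injective objects: by Proposition~\ref{prop:GP} together with Remark~\ref{rem:proj}, the projective-injectives in $\mathbf{GF}(A; \omega)$ are direct summands of $\theta^0(P)\oplus \theta^1(Q)$ for projective $A$-modules $P, Q$, and both $\theta^i$ and ${\rm pr}^1$ plainly send projectives to projectives. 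A standard stabilization argument then yields triangle adjunctions $\theta^1\dashv {\rm pr}^1 \dashv \theta^0$ between $\underline{\mathbf{GF}}(A; \omega)$ and $A\mbox{-}\underline{\rm GProj}$.

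Second, I would check that the right adjoint $\theta^0$ is fully faithful at the stable level. Since ${\rm pr}^1\circ \theta^0 = {\rm Id}$ already at the module level and the counit of $({\rm pr}^1, \theta^0)$ is the identity natural transformation, fully-faithfulness is automatic. It remains to identify ${\rm Ker}\,{\rm pr}^1$: an object $X\in \underline{\mathbf{GF}}(A; \omega)$ lies in this kernel precisely when $X^1$ is a projective $A$-module, which is exactly the defining condition of $\mathbf{G}^0\mathbf{F}(A; \omega)$. With these three ingredients in place, Lemma~\ref{lem:rec} yields the recollement, and its ``moreover'' clause simultaneously delivers the Verdier quotient equivalence $\underline{\mathbf{GF}}(A; \omega)/{\rm Im}\,\theta^0 \stackrel{\sim}\longrightarrow \underline{\mathbf{G}^0\mathbf{F}}(A; \omega)$.

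The main technical hurdle is the descent of the two adjunctions in the first step: one must check that under either adjunction isomorphism, a morphism factoring through a projective on one side corresponds to a morphism factoring through a projective on the other. This is exactly where the preservation of projectives by all four adjoint functors is used, and the argument is slightly subtle because projective objects in $\mathbf{GF}(A; \omega)$ are of the form $\theta^0(P)\oplus\theta^1(Q)$ rather than of simpler shape. Once this is handled, the rest of the proof is purely formal.
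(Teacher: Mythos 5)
Your proposal is correct and follows essentially the same route as the paper: module-level adjunctions from Lemma~\ref{lem:2adj}, descent to the stable categories, fully-faithfulness of the stable $\theta^0$, identification of ${\rm Ker}\,{\rm pr}^1$ with $\underline{\mathbf{G}^0\mathbf{F}}(A;\omega)$, and then Lemma~\ref{lem:rec}. The only difference is that the paper outsources the stabilization of the adjunctions (and the fully-faithfulness of the induced $\theta^i$) to the cited result \cite[Lemma~2.3]{Chen-VB}, whereas you sketch that standard argument directly.
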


\begin{proof}
Recall the adjoint pairs in Lemma~\ref{lem:2adj}. By \cite[Lemma~2.3]{Chen-VB},  the induced functors $\theta^1, {\rm pr}^1$ and $\theta^0$ form two adjoint pairs; moreover, both induced $\theta^i$ are fully faithful. The essential kernel of ${\rm pr}^1 \colon \underline{\mathbf{GF}}(A; \omega)\rightarrow A\mbox{-}\underline{\rm GProj}$ clearly equals  $\underline{\mathbf{G}^0\mathbf{F}}(A; \omega)$. Then the required recollement follows from Lemma~\ref{lem:rec}.
\end{proof}

Let $X=(X^0, X^1; d_X^0, d_X^1)$ be a  module factorization  in $\mathbf{GF}(A; \omega)$. Since both $X^i$ are Gorenstein projective and $d_X^0$ is a monomorphism, then Theorem~\ref{thm:GP-2} implies that ${\rm Cok}^0(X)$ is a Gorenstein projective $\bar{A}$-module. Hence the zeroth cokernel functor ${\rm Cok}^0\colon \mathbf{F}(A; \omega)\rightarrow \bar{A}\mbox{-Mod}$ restricts to the following one.
$$\widetilde{\rm Cok}^0\colon \mathbf{GF}(A; \omega)\longrightarrow \bar{A}\mbox{-}{\rm GProj}$$
It is exact and sends projective objects to projective objects. Then it induces the following triangle functor
$$\widetilde{\rm Cok}^0\colon \underline{\mathbf{GF}}(A; \omega)\longrightarrow \bar{A}\mbox{-}\underline{\rm GProj}$$
between the stable categories. Here, we use this notation $\widetilde{\rm Cok}^0$ to emphasize its difference from the equivalence  ${\rm Cok}^0\colon\underline{\mathbf{G}^0\mathbf{F}}(A; \omega)\rightarrow \bar{A}\mbox{-}\underline{\rm GProj}$ in Theorem~\ref{thm:cok0G}. Indeed, the restriction of $\widetilde{\rm Cok}^0$ on $\underline{\mathbf{G}^0\mathbf{F}}(A; \omega)$ yields the equivalence ${\rm Cok}^0$.

The key observation is as follows.

\begin{lem}\label{lem:compo}
The composition of ${\rm Cok}^0\colon\underline{\mathbf{G}^0\mathbf{F}}(A; \omega)\rightarrow \bar{A}\mbox{-}\underline{\rm GProj}$ in Theorem~\ref{thm:cok0G} with ${\rm inc}_\rho\colon \underline{\mathbf{GF}}(A; \omega)\rightarrow \underline{\mathbf{G}^0\mathbf{F}}(A; \omega)$ in Proposition~\ref{prop:recGF} is isomorphic to the above functor $\widetilde{\rm Cok}^0\colon \underline{\mathbf{GF}}(A; \omega)\rightarrow \bar{A}\mbox{-}\underline{\rm GProj}$.
\end{lem}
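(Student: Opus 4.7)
The plan is to apply the triangle functor $\widetilde{\rm Cok}^0$ to the defining triangle of ${\rm inc}_\rho(X)$ and observe that one of its terms becomes zero.

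Concretely, by Remark~\ref{rem:const} applied to the recollement in Proposition~\ref{prop:recGF} (where $j = {\rm pr}^1$ and $j_\rho = \theta^0$), for every $X\in \underline{\mathbf{GF}}(A;\omega)$ there is a natural exact triangle
\[
{\rm inc}_\rho(X)\longrightarrow X\stackrel{\eta_X}\longrightarrow \theta^0 {\rm pr}^1(X)\longrightarrow \Sigma\, {\rm inc}_\rho(X),
\]
where $\eta_X$ is the unit of the adjoint pair $({\rm pr}^1, \theta^0)$ and the first map is the counit of $({\rm inc}, {\rm inc}_\rho)$. Applying the triangle functor $\widetilde{\rm Cok}^0$ yields a natural exact triangle in $\bar{A}\mbox{-}\underline{\rm GProj}$.

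Next I would observe that $\widetilde{\rm Cok}^0 \circ \theta^0$ is the zero functor: indeed, for any $M\in A\mbox{-}{\rm GProj}$, the module factorization $\theta^0(M) = (M, M; {\rm Id}_M, \omega_M)$ has differential ${\rm Id}_M$, whose cokernel is zero even at the level of $\bar{A}\mbox{-Mod}$. Therefore $\widetilde{\rm Cok}^0(\theta^0 {\rm pr}^1(X)) = 0$ in $\bar{A}\mbox{-}\underline{\rm GProj}$, and the long exact triangle degenerates, forcing
\[
\widetilde{\rm Cok}^0({\rm inc}_\rho(X))\stackrel{\sim}\longrightarrow \widetilde{\rm Cok}^0(X)
\]
to be a natural isomorphism. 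Since ${\rm inc}_\rho(X)$ lies in $\underline{\mathbf{G}^0\mathbf{F}}(A;\omega)$, on the left side $\widetilde{\rm Cok}^0$ coincides with the equivalence ${\rm Cok}^0$ of Theorem~\ref{thm:cok0G}. This gives exactly the claimed natural isomorphism ${\rm Cok}^0 \circ {\rm inc}_\rho \simeq \widetilde{\rm Cok}^0$.

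I do not expect any serious obstacle here: the argument is entirely formal once one unpacks the definition of ${\rm inc}_\rho$ via Remark~\ref{rem:const} and uses that $\widetilde{\rm Cok}^0$ kills every trivial factorization of the form $\theta^0(M)$. The only minor point to spell out is that the map in the triangle is the counit of $({\rm inc}, {\rm inc}_\rho)$, so the resulting isomorphism is the one induced by this counit and is therefore natural in $X$.
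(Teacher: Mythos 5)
Your proposal is correct and follows essentially the same route as the paper: apply $\widetilde{\rm Cok}^0$ to the functorial triangle ${\rm inc}_\rho(X)\to X\to \theta^0({\rm pr}^1(X))\to \Sigma\,{\rm inc}_\rho(X)$ coming from Remark~\ref{rem:const}, note that $\widetilde{\rm Cok}^0\theta^0$ vanishes (the differential of $\theta^0(M)$ is ${\rm Id}_M$), and conclude that the induced map ${\rm Cok}^0\,{\rm inc}_\rho(X)\to \widetilde{\rm Cok}^0(X)$ is a natural isomorphism. The paper's proof is identical in substance, citing \cite[I.1.7]{Hap} for the degeneration step.
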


\begin{proof}
Recall from Remark~\ref{rem:const} the construction of ${\rm inc}_\rho$. It follows that for any $X\in \mathbf{GF}(A; \omega)$, there is a functorial exact triangle in $\underline{\mathbf{GF}}(A; \omega)$.
$${\rm inc}_\rho(X)\longrightarrow X \longrightarrow \theta^0(X^1)\longrightarrow \Sigma{\rm inc}_\rho(X)$$
Applying the triangle functor $\widetilde{\rm Cok}^0$ to this triangle, we have an exact triangle in $\bar{A}\mbox{-}\underline{\rm GProj}$.
$${\rm Cok}^0 {\rm inc}_\rho(X)\longrightarrow \widetilde{\rm Cok}^0(X) \longrightarrow \widetilde{\rm Cok}^0 \theta^0(X^1)\longrightarrow \Sigma {\rm Cok}^0{\rm inc}_\rho(X)$$
Since $\widetilde{\rm Cok}^0 \theta^0(X^1)$ is isomorphic to zero, we infer from \cite[I.1.7]{Hap} that the natural morphism
$${\rm Cok}^0 {\rm inc}_\rho(X)\longrightarrow \widetilde{\rm Cok}^0(X)$$
is the required isomorphism
\end{proof}

Denote by $\mathcal{N}$ the full subcategory of $\underline{\mathbf{GF}}(A; \omega)$ formed by those objects that are isomorphic to $\theta^0(G)$ for some Gorenstein projective $A$-modules $G$. It follows that $\mathcal{N}$ might be identified with the essential image of $\theta^0\colon A\mbox{-}\underline{\rm GProj}\rightarrow \underline{\mathbf{GF}}(A; \omega)$.

\begin{thm}\label{thm:quotient}
The above functor  $\widetilde{\rm Cok}^0\colon \underline{\mathbf{GF}}(A; \omega)\rightarrow \bar{A}\mbox{-}\underline{\rm GProj}$ induces a triangle equivalence
$$ \underline{\mathbf{GF}}(A; \omega)/\mathcal{N}\stackrel{\sim}\longrightarrow \bar{A}\mbox{-}\underline{\rm GProj}.$$
\end{thm}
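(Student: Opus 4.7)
The strategy is to assemble the theorem as a purely formal composition of three results already established in the paper: the Verdier quotient statement at the end of Proposition~\ref{prop:recGF}, the triangle equivalence of Theorem~\ref{thm:cok0G}, and the identification in Lemma~\ref{lem:compo}. No genuinely new homological input should be needed.

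First I would identify $\mathcal{N}$ with the essential image of the fully faithful triangle functor $\theta^0\colon A\mbox{-}\underline{\rm GProj}\to\underline{\mathbf{GF}}(A;\omega)$; this is immediate from the definition of $\mathcal{N}$ together with the fact, recorded in the remarks preceding Proposition~\ref{prop:recGF}, that $\theta^0$ descends to a (fully faithful) triangle functor on stable categories. The last assertion of Proposition~\ref{prop:recGF} then produces a triangle equivalence
$$R\colon \underline{\mathbf{GF}}(A;\omega)/\mathcal{N}\stackrel{\sim}\longrightarrow \underline{\mathbf{G}^0\mathbf{F}}(A;\omega),$$
induced by ${\rm inc}_\rho$. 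Composing $R$ with the equivalence ${\rm Cok}^0\colon\underline{\mathbf{G}^0\mathbf{F}}(A;\omega)\stackrel{\sim}\to\bar{A}\mbox{-}\underline{\rm GProj}$ of Theorem~\ref{thm:cok0G} gives a triangle equivalence $\underline{\mathbf{GF}}(A;\omega)/\mathcal{N}\stackrel{\sim}\to\bar{A}\mbox{-}\underline{\rm GProj}$.

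It remains to check that this composite is (naturally isomorphic to) the functor induced by $\widetilde{\rm Cok}^0$. Observe that ${\rm Cok}^0(\theta^0(G))=\operatorname{Coker}({\rm Id}_G)=0$ for every $G\in A\mbox{-GProj}$, so $\widetilde{\rm Cok}^0$ annihilates $\mathcal{N}$ and thus, by the universal property of the Verdier quotient, factors uniquely through a triangle functor $\underline{\mathbf{GF}}(A;\omega)/\mathcal{N}\to\bar{A}\mbox{-}\underline{\rm GProj}$. By Lemma~\ref{lem:compo} there is a natural isomorphism $\widetilde{\rm Cok}^0\simeq {\rm Cok}^0\circ{\rm inc}_\rho$; descending this isomorphism along the quotient functor identifies the induced functor with ${\rm Cok}^0\circ R$, which is an equivalence by the previous paragraph.

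I do not anticipate any substantive obstacle, since the hard work has already been carried out in Sections~2--5: the theorem is essentially packaging. The only small point needing care is checking that the natural isomorphism of Lemma~\ref{lem:compo} is compatible with the quotient functor $\underline{\mathbf{GF}}(A;\omega)\to\underline{\mathbf{GF}}(A;\omega)/\mathcal{N}$, but this is automatic from the construction of the induced functor via the universal property of the Verdier quotient.
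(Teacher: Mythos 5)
Your proposal is correct and follows essentially the same route as the paper's own proof: identify $\mathcal{N}$ with ${\rm Im}\,\theta^0$, invoke the Verdier quotient equivalence induced by ${\rm inc}_\rho$ from Proposition~\ref{prop:recGF}, compose with the equivalence of Theorem~\ref{thm:cok0G}, and use Lemma~\ref{lem:compo} to identify the composite with the functor induced by $\widetilde{\rm Cok}^0$. Your extra remark that $\widetilde{\rm Cok}^0$ kills $\mathcal{N}$ and hence factors through the quotient by the universal property is a sensible bit of added care that the paper leaves implicit.
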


\begin{proof}
We identify $\mathcal{N}$ with ${\rm Im}\; \theta^0$. Proposition~\ref{prop:recGF} implies that ${\rm inc}_\rho$ induces a triangle equivalence
$$ \underline{\mathbf{GF}}(A; \omega)/{\mathcal{N}}\stackrel{\sim}\longrightarrow \underline{\mathbf{G}^0\mathbf{F}}(A; \omega).$$
Composing this equivalence with the equivalence ${\rm Cok}^0\colon\underline{\mathbf{G}^0\mathbf{F}}(A; \omega)\rightarrow \bar{A}\mbox{-}\underline{\rm GProj}$ in Theorem~\ref{thm:cok0G}, we obtain a triangle equivalence
$$ \underline{\mathbf{GF}}(A; \omega)/{\mathcal{N}}\stackrel{\sim}\longrightarrow \bar{A}\mbox{-}\underline{\rm GProj}.$$
In view of Lemma~\ref{lem:compo}, we infer that this composite triangle equivalence is actually induced by $\widetilde{\rm Cok}^0$.
\end{proof}

\section{The finite case: matrix factorizations}

In this section, we restrict the equivalence in Theorem~\ref{thm:cok0G} to matrix factorizations, and strengthen Theorem~C in Introduction; see Theorem~\ref{thm:finite}.

Recall from \cite[Definition~2.2]{CCKM} that a module factorization $X=(X^0, X^1; d_X^0, d_X^1)$ is called a \emph{matrix factorization} if each $X^i$ is a finitely generated projective $A$-module.  Denote by $\mathbf{MF}(A; \omega)$ the full subcategory formed by matrix factorizations, and by $\underline{\mathbf{MF}}(A; \omega)$ its stable category, which is a triangulated subcategory of $\underline{\mathbf{GF}}(A; \omega)$. We emphasize that the suspension functor of $\underline{\mathbf{MF}}(A; \omega)$ is restricted from the shift endofunctor $S$; compare \cite[Proposition~5.7]{CCKM} and Remark~\ref{rem:syzygy-S}. We mention matrix factorizations of locally free sheaves over schemes in \cite{PV,BW}.

Let $A$ be any ring. A totally acyclic complex $P^\bullet$ of $A$-modules is called \emph{locally finite} if each component $P^i$ is finitely generated. Denote by $A\mbox{-Gproj}$ the full subcategory of $A\mbox{-mod}$ formed by modules of the form $Z^0(P^\bullet)$ for some locally finite totally acyclic complex $P^\bullet$. The modules in $A\mbox{-Gproj}$ are sometimes called \emph{totally reflexive modules}.  The category $A\mbox{-Gproj}$ is a Frobenius exact category, whose projective-injective objects are precisely finitely generated projective $A$-module. The stable category $A\mbox{-}\underline{\rm Gproj}$ is a triangulated subcategory of $A\mbox{-}\underline{\rm GProj}$. We have $A\mbox{-Gproj}\subseteq A\mbox{-GProj}\cap A\mbox{-mod}$ and that the equality holds if $A$ is left coherent; see \cite[Lemma~3.4]{Chen11}.

Recall that $\bar{A}=A/{(\omega)}$. Denote by $\bar{A}\mbox{-Gproj}^{<+\infty}$ the full subcategory of $\bar{A}\mbox{-Gproj}$  consisting of totally reflexive $\bar{A}$-modules $N$ with ${\rm pd}_A(N)<+\infty$. It is closed under extensions in $\bar{A}\mbox{-Gproj}$ and becomes a Frobenius exact category. Consequently, the stable category $\bar{A}\mbox{-}\underline{\rm Gproj}^{<+\infty}$ is canonically triangulated.

For any $A$-module, we write $\overline{M}=M/{\omega M}$. The following result is analogous to \cite[Proposition~5.1]{Eis}.

\begin{lem}\label{lem:finite}
Let $X=(X^0, X^1; d_X^0, d_X^1)$ be a matrix factorization. Then the following complex of $\bar{A}$-modules
$$ \cdots \longrightarrow \overline{^{\sigma^{-1}}(X^1)}\longrightarrow \overline{X^0}\longrightarrow \overline{X^1} \longrightarrow \overline{^\sigma(X^0)}\longrightarrow \overline{^\sigma(X^1)}\longrightarrow  \cdots  $$
is locally finite totally acyclic, where the differentials are induced by $d_X^i$. Consequently, we have that ${\rm Cok}^0(X)$ belongs to $\bar{A}\mbox{-}{\rm Gproj}^{<+\infty}$. \hfill $\square$
\end{lem}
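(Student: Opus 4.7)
The plan is to show the displayed sequence is a locally finite totally acyclic complex of projective $\bar{A}$-modules, and then to read off the corollary about ${\rm Cok}^0(X)$. The routine ingredients are easy: each component is a finitely generated projective $\bar{A}$-module because each $X^i$ is finitely generated projective over $A$ and $\sigma$-twisting preserves this, and consecutive differentials compose to zero modulo $\omega$ because the two defining relations $d_X^1 \circ d_X^0 = \omega_{X^0}$ and ${^\sigma(d_X^0)} \circ d_X^1 = \omega_{X^1}$, together with their $\sigma^k$-twists, are literally multiplication by $\omega$. It is also useful to note, by applying $\omega a = \sigma(a)\omega$ with $a = \omega$ and cancelling by regularity, that $\sigma(\omega) = \omega$, so that the identifications of twisted modules behave well with respect to $\omega$-multiplication.

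The first substantive step is acyclicity. Every $X^i$ is $\omega$-torsionfree because it is projective. Given $\bar{x} \in \overline{X^0}$ with $\overline{d_X^0}(\bar{x}) = 0$, we have $d_X^0(x) = \omega y$ for some $y \in X^1$. Applying $d_X^1$ and using $d_X^1 \circ d_X^0 = \omega_{X^0}$ gives $\omega \cdot d_X^1(y) = {^\sigma(\omega x)}$ in ${^\sigma(X^0)}$; writing $d_X^1(y) = {^\sigma(z)}$, the two sides read ${^\sigma(\omega z)}$ and ${^\sigma(\omega x)}$, so $\omega z = \omega x$, and $\omega$-regularity of $X^0$ forces $z = x$. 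Thus $\bar{x}$ is the image of $\overline{{^{\sigma^{-1}}(y)}}$ under $\overline{{^{\sigma^{-1}}(d_X^1)}}$. Repeating this calculation after replacing $X$ by its shifts $S^k(X)$ handles every other position. The second step, total acyclicity, is the main obstacle. I would handle it by dualization: form the dual matrix factorization $X^\vee$ with components $(X^1)^\ast$ and $(X^0)^\ast$ and with differentials obtained by dualizing $d_X^i$ while tracking $\sigma$ through the canonical identifications; then the complex associated to $X^\vee$ agrees, up to reindexing, with ${\rm Hom}_{\bar{A}}(-, \bar{A})$ applied to the original complex. Since projective $\bar{A}$-modules are retracts of free ones, acyclicity of this dual (by the same $\omega$-regularity argument applied now to $X^\vee$) gives total acyclicity. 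The bookkeeping with $\sigma$-twists under dualization is the only real subtlety.

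Finally, for the consequence, injectivity of $d_X^1 \circ d_X^0 = \omega_{X^0}$ (by $\omega$-torsionfreeness of $X^0$) forces $d_X^0$ to be injective, so $0 \to X^0 \to X^1 \to {\rm Cok}^0(X) \to 0$ is short exact as a sequence of $A$-modules, giving ${\rm pd}_A({\rm Cok}^0(X)) \leq 1$. Since ${\rm Cok}^0(X)$ is a cocycle of the locally finite totally acyclic complex just produced, it is a totally reflexive $\bar{A}$-module, and therefore ${\rm Cok}^0(X) \in \bar{A}\mbox{-}{\rm Gproj}^{<+\infty}$.
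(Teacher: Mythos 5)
Your proposal is correct, but it takes a genuinely different route from the paper: for the main claim (that the complex is locally finite totally acyclic) the paper does not argue directly at all, it simply cites \cite[Lemma~5.9]{MU} (compare \cite[Proposition~2.4]{CCKM}), and its own proof only supplies the final statement, namely that the complex is, up to suspension, a complete resolution of ${\rm Cok}^0(X)$ and that the short exact sequence $0\to X^0\to X^1\to {\rm Cok}^0(X)\to 0$ gives ${\rm pd}_A({\rm Cok}^0(X))\leq 1$; your treatment of this consequence coincides with the paper's. For the main claim you reconstruct the standard argument of the cited references: acyclicity via the elementwise $\omega$-regularity computation (correct, and the observation $\sigma(\omega)=\omega$ is exactly the identification needed), and total acyclicity by dualizing to a matrix factorization of $\omega$ over $A^{\rm op}$. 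This is precisely where finite generation and projectivity of the components enter, since ${\rm Hom}_{\bar{A}}(\overline{M},\bar{A})\cong \overline{{\rm Hom}_A(M,A)}$ only for such $M$ --- which is the reason, flagged in the paper's introduction, that the duality argument does not extend to general module factorizations. If you write the dualization out, make two points explicit: (i) reducing total acyclicity to the single test object $\bar{A}$ uses that the components are finitely generated, so that ${\rm Hom}_{\bar{A}}(P^n,-)$ commutes with the direct sums needed to reach arbitrary projective test modules; (ii) the dualized differentials do form a matrix factorization on the right-module side because $\sigma(\omega)=\omega$ and $\omega$ is regular and normal in $A^{\rm op}$ (with associated automorphism $\sigma^{-1}$), so the same acyclicity computation applies verbatim there. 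With that bookkeeping carried out, your argument is complete and self-contained, which is something the paper's citation-based proof deliberately does not provide.
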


\begin{proof}
The result is due to \cite[Lemma~5.9]{MU}; compare \cite[Proposition~2.4]{CCKM}. For the final statement, we just observe  that ${\rm Cok}^0(X)$ is isomorphic to the cokernel of the induced morphism $\overline{X^0}\rightarrow \overline{X^1}$, which is further isomorphic to the kernel of $\overline{^\sigma(X^0)}\rightarrow \overline{^\sigma(X^1)}$. In other words, the  complex above is a complete resolution of ${\rm Cok}^0(X)$ up to suspension. Moreover, since $d_X^0$ is mono, we have a short exact sequence $0\rightarrow X^0\rightarrow X^1\rightarrow {\rm Cok}^0(X)\rightarrow 0 $ of $A$-modules with both $X^i$ projective.
\end{proof}

\begin{thm}\label{thm:finite}
The equivalence  ${\rm Cok}^0\colon\underline{\mathbf{G}^0\mathbf{F}}(A; \omega)\rightarrow \bar{A}\mbox{-}\underline{\rm GProj}$ in Theorem~\ref{thm:cok0G} restricts to  a triangle equivalence
$${\rm Cok}^0\colon \underline{\mathbf{MF}}(A; \omega)\longrightarrow \bar{A}\mbox{-}\underline{\rm Gproj}^{<+\infty}.$$

\end{thm}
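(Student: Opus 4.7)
My plan is to leverage the equivalence $\mathrm{Cok}^0 \colon \underline{\mathbf{G}^0\mathbf{F}}(A;\omega) \xrightarrow{\sim} \bar{A}\mbox{-}\underline{\mathrm{GProj}}$ of Theorem~\ref{thm:cok0G} and to identify $\underline{\mathbf{MF}}(A;\omega)$ with the preimage of $\bar{A}\mbox{-}\underline{\mathrm{Gproj}}^{<+\infty}$. Since every finitely generated projective module is Gorenstein projective, we have $\mathbf{MF}(A;\omega) \subseteq \mathbf{G}^0\mathbf{F}(A;\omega)$, and by Lemma~\ref{lem:finite} the restriction of $\mathrm{Cok}^0$ lands in $\bar{A}\mbox{-Gproj}^{<+\infty}$. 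Full faithfulness is inherited from Theorem~\ref{thm:cok0G}, and the restricted functor is automatically a triangle functor since the suspension of $\underline{\mathbf{MF}}(A;\omega)$ is given by the same shift $S$ used on the ambient stable category (Remark~\ref{rem:syzygy-S}). The entire argument therefore reduces to proving density.

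For density, my plan is as follows: given $N \in \bar{A}\mbox{-Gproj}^{<+\infty}$, produce a short exact sequence $0 \to L \to P^1 \to N \to 0$ of $A$-modules with both $L$ and $P^1$ finitely generated projective; the construction from Proposition~\ref{prop:full-dense} will then yield a module factorization $(L, P^1; j, k) \in \mathbf{F}^{\mathrm{mp}}(A;\omega)$ whose zeroth cokernel is $N$, and by the finite-generation conditions on $L$ and $P^1$ this is a matrix factorization. By Corollary~\ref{cor:pd1} we already know $\mathrm{pd}_A(N) = 1$, so in any presentation with $P^1$ projective the kernel $L$ is automatically projective. The nontrivial point is establishing finite generation simultaneously for both terms, i.e.\ that $N$ is finitely presented as an $A$-module; the totally reflexive hypothesis on $N$ is phrased over $\bar{A}$, whereas matrix factorizations live over $A$, and this gap is the main obstacle.

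To bridge it, I would argue as follows. Being totally reflexive over $\bar{A}$, $N$ is finitely presented over $\bar{A}$, giving $\bar{A}^m \xrightarrow{\bar\phi} \bar{A}^n \twoheadrightarrow N \to 0$. Lifting $\bar\phi$ to $\phi \colon A^m \to A^n$ and the composite to a surjection $\pi \colon A^n \twoheadrightarrow N$, a diagram chase using $\omega N = 0$ yields $\ker(\pi) = \phi(A^m) + \omega A^n$; the normality of $\omega$ ensures that $\omega A^n$ is the left $A$-submodule generated by $\omega e_1, \ldots, \omega e_n$, so $\ker(\pi)$ is finitely generated by at most $m + n$ elements. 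Taking $P^1 = A^n$ and $L = \ker(\pi)$ then gives a finitely generated projective $L$, and the construction of Proposition~\ref{prop:full-dense} delivers the required matrix factorization. Once this density statement is in hand, the theorem follows at once from Theorem~\ref{thm:cok0G}.
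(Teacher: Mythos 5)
Your proposal is correct and takes essentially the same route as the paper: well-definedness via Lemma~\ref{lem:finite}, full faithfulness inherited from Theorem~\ref{thm:cok0G}, and density by showing $N$ is finitely presented over $A$, invoking Corollary~\ref{cor:pd1} to get ${\rm pd}_A(N)=1$, and feeding the resulting sequence $0\to L\to P^1\to N\to 0$ with finitely generated projective terms into the construction from Proposition~\ref{prop:full-dense}. The only difference is that you verify finite presentation of $N$ over $A$ by an explicit lifting of the $\bar{A}$-presentation (correctly, since $\ker\pi=\phi(A^m)+(A\omega)^n$ is generated by $m+n$ elements), whereas the paper deduces it more briefly from $\bar{A}$ being a finitely presented $A$-module.
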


\begin{proof}
Lemma~\ref{lem:finite} implies that the restricted cokernel functor ${\rm Cok}^0$ is well defined. By Theorem~\ref{thm:cok0G}, it is clearly fully faithful.

To see that  ${\rm Cok}^0$ is dense, we take any module $N\in \bar{A}\mbox{-}{\rm Gproj}^{<+\infty}$. The quotient ring $\bar{A}$ is a finitely presented $A$-module. Since $N$ is a finitely presented $\bar{A}$-module, we infer that $N$ is a finitely presented $A$-module. By Corollary~\ref{cor:pd1}, the $A$-module $N$ has projective dimension one. It follows that there is a short exact sequence of $A$-modules
\begin{align}\label{ses:N}
0\longrightarrow P^0\stackrel{d_P^0}\longrightarrow P^1\longrightarrow N\longrightarrow 0
\end{align}
with both $P^i$ finitely generated projective $A$-modules. By the same argument in the first paragraph in the proof of Proposition~\ref{prop:full-dense}, there is a morphism $d_P^1\colon P^1\rightarrow {^\sigma(P^0)}$ such that $P=(P^0, P^1; d_P^0, d_P^1)$ is a matrix factorization. By (\ref{ses:N}), we have an isomorphism  ${\rm Cok}^0(P)\simeq N$, as required.
\end{proof}

\begin{rem}
(1) When $A$ is commutative noetherian, Theorem~\ref{thm:finite} is due to \cite[Example~4.4]{Chen12} with a different proof.

(2) Assume that $A$ has finite global dimension. We have $ \bar{A}\mbox{-}\underline{\rm Gproj}= \bar{A}\mbox{-}\underline{\rm Gproj}^{<+\infty}$. Therefore,  Theorem~\ref{thm:finite}  strengthens Theorem~C in Introduction.

(3) Assume that $A$ is left coherent and that  any finitely presented Gorenstein projective $A$-module is projective. The same argument as in the proof of Theorem~\ref{thm:finite} yields $\bar{A}\mbox{-}\underline{\rm Gproj}=\bar{A}\mbox{-}\underline{\rm Gproj}^{<+\infty}$. Then we obtain a noncommutative version of \cite[Theorem~A]{Tak}.
\end{rem}

\section{An example: integral representations of finite groups}

In this section, we briefly illustrate the obtained results  using integral representations of finite groups.

Let $G$ be a finite group. Consider the integral group ring $\mathbb{Z}G$. It is well known that $\mathbb{Z}G$ is $1$-Gorenstein, and that a $\mathbb{Z}G$-module $M$ is Gorenstein projective if and only if the underlying abelian group of $M$ is free; for example, see \cite{Chen-Ren}. Such modules are sometimes called \emph{(generalized) $G$-lattices} \cite{BCK}. We denote $\mathbb{Z}G\mbox{-GProj}$ by $G\mbox{-Latt}$. The central problem in the integral representation theory is to study $G$-lattices, which motivates the study of singularity categories for general Gorenstein rings \cite{Buc}.

Let $p$ be a prime number, which is viewed as an element in $\mathbb{Z}G$. The element $p$ is regular and central. The corresponding matrix ring is
$$\Gamma=\begin{pmatrix} \mathbb{Z}G & \mathbb{Z}G\\
                         p\mathbb{Z}G & \mathbb{Z}G\end{pmatrix},$$
                         which  is $1$-Gorenstein by Proposition~\ref{prop:d-Gor}. Then the category $\mathbf{F}(\mathbb{Z}G; p)$ of module factorizations of $p$ over $\mathbb{Z}G$ is equivalent to $\Gamma\mbox{\rm -Mod}$. We mention that $\Gamma$ is isomorphic to the group ring of $G$ over the matrix ring $\begin{pmatrix} \mathbb{Z} & \mathbb{Z}\\
                         p\mathbb{Z} & \mathbb{Z}\end{pmatrix}$; see \cite[Subsection~6.7.1]{Zim}.

We observe that a module factorization $X=(X^0, X^1; d_X^0, d_X^1)$ of $p$ over $\mathbb{Z}G$ belongs to $\mathbf{GF}(\mathbb{Z}G; p)$ if and only if both components $X^i$ are $G$-lattices. Therefore, such $X$ might be called a \emph{lattice factorization} of $p$. We write $\mathbf{LF}(G; p)$ for $\mathbf{GF}(\mathbb{Z}G; p)$.   Similarly, we write $\mathbf{L}^0\mathbf{F}(G; p)$ for $\mathbf{G}^0\mathbf{F}(\mathbb{Z}G; p)$.

Denote by $\mathbb{F}_p$ the finite field of order $p$.  We identify the group algebra $\mathbb{F}_pG$ with the quotient ring $\mathbb{Z}G/(p)$. Since $\mathbb{F}_pG$ is Frobenius, any $\mathbb{F}_pG$-module is Gorenstein projective. Theorem~\ref{thm:cok0G} implies a triangle equivalence
$${\rm Cok}^0\colon \underline{\mathbf{L}^0\mathbf{F}}(G; p) \stackrel{\sim}\longrightarrow \mathbb{F}_pG\mbox{-}\underline{\rm Mod}.$$
Using this equivalence and Lemma~\ref{lem:compo}, the recollement in Proposition~\ref{prop:recGF} induces the following recollement.
\[
\xymatrix{
 \mathbb{F}_pG\mbox{-}\underline{\rm Mod}  \ar[rr] &&  \ar@/_1.5pc/[ll] \ar@/^1.5pc/[ll]|{\widetilde{\rm Cok}^0}  \underline{\mathbf{L}\mathbf{F}}(G; p)  \ar[rr]|{\rm pr^1} && G\mbox{-}\underline{\rm Latt} \ar@/_1.5pc/[ll]|{\theta^1}  \ar@/^1.5pc/[ll]|{\theta^0}
} \]
Consequently, if $p$ does not divide the order of $G$, $\mathbb{F}_pG$ is semisimple and the stable category $ \mathbb{F}_pG\mbox{-}\underline{\rm Mod}$ is trivial. In this situation, we have a triangle equivalence
$${\rm pr}^1\colon  \underline{\mathbf{L}\mathbf{F}}(G; p)\stackrel{\sim}\longrightarrow G\mbox{-}\underline{\rm Latt}. $$
We mention that the recollement above might play a role in the stratification theory  \cite{Bar, BBIKP} for $G\mbox{-}\underline{\rm Latt}$.

Denote by $C_n$ the cyclic group of order $n$. Consider the polynomial algebra $\mathbb{Z}[x]$ with one variable.  We identify the group ring $\mathbb{Z}C_n$ with the quotient ring $\mathbb{Z}[x]/(x^n-1)$. Therefore, we identify $C_n\mbox{-Latt}$ with $\mathbb{Z}[x]/(x^n-1)\mbox{-GProj}$.

Since the global dimension of $\mathbb{Z}[x]$ is two,  any Gorenstein projective $\mathbb{Z}[x]$-module is projective. Therefore, a module factorization $Y=(Y^0, Y^1; d_Y^0, d_Y^1)$ of $x^n-1$ over $\mathbb{Z}[x]$ belongs to $\mathbf{GF}(\mathbb{Z}[x]; x^n-1)$ if and only if both components $Y^i$ are projective $\mathbb{Z}[x]$-modules. Such $Y$ might be called a \emph{projective-module factorization} of $x^n-1$. We have $\mathbf{GF}(\mathbb{Z}[x]; x^n-1)=\mathbf{G}^0\mathbf{F}(\mathbb{Z}[x]; x^n-1)$, both of which are  better written as $\mathbf{PF}(\mathbb{Z}[x]; x^n-1)$.

Theorem~\ref{thm:cok0G} implies a triangle equivalence
$${\rm Cok}^0\colon \underline{\mathbf{PF}}(\mathbb{Z}[x]; x^n-1)\stackrel{\sim}\longrightarrow C_n\mbox{-}\underline{\rm Latt}.$$
If $p$ does not divide $n$, these categories are triangle equivalent to $\underline{\mathbf{L}\mathbf{F}}(C_n; p)$.

\vskip 5pt

\noindent {\bf Acknowledgements.}  \; The author thanks Professor Quanshui Wu very much for the reference \cite{CCKM} and his encouragement, and thanks Professor David Eisenbud for the reference \cite{EP}. He is very grateful to   Professor Hongxing Chen, Professor Bernhard Keller and Wenchao Wu for many helpful suggestions, and is indebted to Professor Li Liang for pointing out Theorem~\ref{thm:GP-2} and to Professor Peder Thompson for the references \cite{DM, BT}.

The project is supported by National Key R$\&$D Program of China (No. 2024YFA1013801) and  National Natural Science Foundation of China (No.s 12325101 and  12131015). Several days after the submission of this work to arXiv, the reference \cite{BFNS} appeared, whose main result might be viewed as a finite commutative version of Theorem~\ref{thm:cok0G}.

\vskip 10pt

 {\footnotesize \noindent Xiao-Wu Chen\\
 School of Mathematical Sciences, University of Science and Technology of China\\
 Hefei 230026, Anhui, PR China\\
 xwchen$\symbol{64}$mail.ustc.edu.cn}

\end{document}